\newtheorem{remark}{Remark}[section]
\title{Alternating Direction Method of Multipliers \\
for Linear Inverse Problems }%[RG rule for statistical inverse problems]
\author{Yuling Jiao\thanks{The School of Statistics and Mathematics, Zhongnan University of Economics and Law, Wuhan 430063,
People's Republic of China} ({\tt yulingjiaomath@whu.edu.cn})
\and Qinian Jin\thanks{Mathematical Sciences Institute, Australian National
University, Canberra, ACT 2601, Australia} ({\tt qinian.jin@anu.edu.au})
\and Xiliang Lu\thanks{Corresponding author. School of Mathematics and Statistics, Wuhan University, Wuhan 430072, People's Republic of China
\&  Hubei Key Laboratory of Computational Science (Wuhan University), Wuhan 430072, People¡¯s Republic of China}
({\tt xllv.math@whu.edu.cn})
\and Weijie Wang\thanks{School of Mathematics and Statistics, Wuhan University, Wuhan 430072, People's Republic of China}
({\tt wjwang.math@whu.edu.cn})
}
\begin{document}

\maketitle

\def\p{\partial}
\def\l{\langle}
\def\r{\rangle}
\def\N{\mathscr N}
\def\R{\mathscr R}
\def\D{\mathscr D}
\def\H{\mathcal H}
\def\X{\mathcal X}
\def\Y{\mathcal Y}
\def\B{\mathcal B}
\def\A{\mathcal A}
\def\Z{\mathcal Z}
\def\a{\alpha}
\def\b{\beta}
\def\d{\delta}
\def\la{\lambda}
\def\X{\mathcal X}
\def\Y{\mathcal Y}

\begin{abstract}
%Alternating direction method of multipliers (ADMM) is an efficient and popular method for solving
%structured optimization problems due to its decomposability and superior flexibility. When applied
%to deal with inverse problems, ADMM is usually used to solve a variational regularized minimization
%problem in which the regularization parameter is assumed to be chosen properly.
%It is natural to ask if it is possible to apply ADMM to solve inverse problems directly.
In this paper
we propose an iterative method using alternating direction method of multipliers (ADMM) strategy to solve linear inverse problems in Hilbert
spaces with general convex penalty term. When the data is given exactly, we give a convergence
analysis of our ADMM algorithm without assuming the existence of Lagrange multiplier. In case the data contains noise,
we show that our method is a regularization method as long as it is terminated by a suitable stopping
rule. Various numerical simulations are performed to test the efficiency of the method.
\end{abstract}
\begin{keywords}
Linear inverse problems, alternating direction method of multipliers,
Bregman distance, convergence, regularization property.
\end{keywords}
\begin{AMS} 65J20, 65J22, 90C25
\end{AMS}
\pagestyle{myheadings}
\thispagestyle{plain}
\markboth{Y. Jiao, Q. Jin, X. Lu and W. Wang}{ADMM for Linear Inverse Problems}
%\numberwithin{equation}{section}

\section{\bf Introduction}

The study of linear inverse problems gives rise to the linear  % equations of the form
equation of the form
\begin{equation}\label{eqn:gov}
Ax = b,
\end{equation}
where $A: \X \to \H$ is a bounded linear operator between two Hilbert spaces $\X$ and $\H$.
In applications, we will not simply consider (\ref{eqn:gov}) alone, instead
we will incorporate a priori available information on solutions into the problem.
Assume that we have a priori information on the feature, such as sparsity, of the sought solution under a suitable
transform $W$ from $\X$ to another Hilbert spaces $\Y$ with domain $\D(W)$. We may then take a convex
function $f: \Y \to (-\infty, \infty]$ to capture such feature. This leads us to consider the convex
minimization problem
\begin{align}\label{eq:1}
\left\{\begin{array}{lll}
\mbox{minimize } & f(Wx) \\
\mbox{subject to } & A x = b, \ \ \ x\in \D(W),
\end{array}\right.
\end{align}
where $f$ and $W$ should be specified during applications. For inverse problems, the operator $A$ usually is either
non-invertible or ill-conditioned with a huge condition number.
Thus, a small perturbation of the data may lead the problem
(\ref{eq:1}) to have no solution; even if it has a solution, this solution may not depend continuously on the data
due to the uncontrollable amplification of noise. In order to overcome such ill-posedness, regularization techniques
should be taken into account to produce a reasonable approximate solution from noisy data.
One may refer to \cite{EnglHankeNeubauer:1996,ItoJin:2014,SchusterBarbara:2012} for comprehensive accounts on
the variational regularization methods as well as iterative regularization methods.

Variational regularization methods typically consider (\ref{eq:1}) by solving a family of well-posed minimization problems
\begin{align}\label{eqn:Tik}
\min_{x\in \D(W)} \left\{\frac{1}{2} \|A x-b\|^2 + \a f(W x) \right\},
\end{align}
where $\a>0$ is the so called regularization parameter whose choice crucially affects the performance of the method.
When the regularization parameter $\a$ is given, many efficient solvers have been developed to solve
(\ref{eqn:Tik}) when $f$ are sparsity promoting functions. However,  to find a good approximate solution,
the regularization parameter $\a$ should be carefully chosen, consequently
one has to solve \eqref{eqn:Tik} for many different values of $\a$, which can be time-consuming.

Among algorithms for solving (\ref{eqn:Tik}), the alternating direction method of multipliers (ADMM) is a favorable one.
The ADMM was proposed in \cite{GabayMercier:1976,GlowinskiMarroco:1975} around the mid-1970 and was analyzed in \cite{EcksteinBertsekas:1992,Gabay:1983,LionsMercier:1979,Rockafellar:1976}. It has been widely used
in solving structured optimization problems due to its decomposability and superior flexibility.
Recently, it has been revisited and popularized in modern signal/image processing, statistics, machine learning,
and so on; see the recent survey paper \cite{Boydetal:2011} and the references therein.
Due to the popularity of ADMM and its variants, new and  refined  convergence results  have been
obtained from several different perspectives, see, for example,
\cite{ChambollePock:2011,DavisYin:2014b,DengYin:2012,HeLiuWangYuan:2014,HeYuan:2012b,HeYuan:2012a,HongLuo:2012,LiShenXuZhang:2014,ZhangBurgerOsher:2011},
just to name a few of them. To the best of our knowledge, the existing convergence analyses
of ADMM depend on the solvability of the dual problem or the existence of saddle points for the
corresponding Lagrangian function, which might not be true
for inverse problems \eqref{eq:1}.

In this paper we propose an ADMM algorithm in the framework of iterative regularization methods.
By introducing an additional variable $y = W x$, we can reformulate (\ref{eq:1}) into the
equivalent form
 \begin{align}\label{eq:2}
\left\{\begin{array}{lll}
\mbox{minimize } &  f(y) \\
\mbox{subject to } &  A x = b, \ \ \ Wx =y, \,\,\, x\in \D(W).
\end{array}\right.
\end{align}
The corresponding augmented Lagrangian function is
\begin{align}\label{eq:lag}
L_{\rho_1,\rho_2}(x, y; \la, \mu) & = f(y) + \l \la, A x-b\r + \l \mu, Wx-y\r\nonumber \\
& \quad \,  + \frac{\rho_1}{2} \|A x-b\|^2 + \frac{\rho_2}{2}\|Wx-y\|^2,
\end{align}
where $\rho_1$ and $\rho_2$ are two positive constants. Our ADMM algorithm then reads
\begin{equation}\label{ADMM}
\left\{\begin{array}{l}
x_{k+1} = \arg \min_{x\in \D(W)} \;L_{\rho_1,\rho_2} (x,y_k;\lambda_k,\mu_k),\\[1.2ex]
y_{k+1} = \arg \min_{y\in \Y} \;L_{\rho_1,\rho_2} (x_{k+1},y;\lambda_k,\mu_k),\\[1.2ex]
\lambda_{k+1}= \lambda_k + \rho_1 (A x_{k+1} -b), \\[1.2ex]
\mu_{k+1} = \mu_k + \rho_2(Wx_{k+1}-y_{k+1}).
\end{array}
\right.
\end{equation}
The $x$-subproblem in \eqref{ADMM} is a quadratical minimization problem, which can be solved by many methods,
and the $y$-subproblem can be solved explicitly when $f$ is properly chosen, e.g., $f$ are certain sparsity
promoting functions, Thus, our ADMM algorithm can be efficiently implemented.
When the data $b$ in (\ref{eq:1}) is consistent in the sense that $b = A x$ for some $x\in \D(W)$ with $W x\in \D(f)$,
and when $f$ is strongly convex, we give a convergence analysis of our ADMM algorithm by using tools
from convex analysis. The proof is based on several remarkable monotonicity results and does not need the existence of the Lagrange multiplier to \eqref{eq:2}.
When the data contains noise, similar as other iterative regularization methods, our ADMM algorithm shows
the semi-convergence property, i.e. the iterate becomes close to the sought solution at the beginning,
however, after a critical number of iterations, the iterate leaves the sought solution far away as the iteration
proceeds. By proposing a suitable stopping rule, we establish the regularization property of our algorithm.
To the best of our knowledge, this is the first time that ADMM is used to solve inverse problems directly as
an iterative regularization method.

There are several different iterative regularization methods proposed for solving (\ref{eq:1}).
The augmented Lagrangian method (ALM) is a popular and efficient algorithm which takes the form
\begin{equation}\label{eqn:ALM}
\left\{\begin{array}{l}
x_{k+1} = \arg \min_{x\in \D(W)} \left\{f(Wx) +\l \la_k, Ax - b\rangle + \frac{\rho_k}{2}\|Ax - b\|^2\right\},\\[1.4ex]
\la_{k+1} = \la_k + \rho_k (Ax_{k+1} -b),
\end{array}
\right.
\end{equation}
where $\{\rho_k\}$ is a sequence of positive numbers satisfying suitable properties.
ALM was originally proposed by Hestenes \cite{Hestenes:1969} and Powell \cite{Powell:1969} independently;
see \cite{Bertsekas:1982,Rockafellar:1973} for its convergence analysis for well-posed optimization problems.
Recently, ALM has been applied to solve ill-posed inverse problems in  \cite{FrickGrasmair:2012,FrickLorenzElena:2011,FrickScherzer:2010,JinZhong:2014,OBGXY:2005}. It has been shown
that ALM can produce a satisfactory approximate solution within a few iterations if $\{\rho_k\}$ is chosen to
be geometrically increasing. However, since $f(Wx)$ and $A$ are coupled, solving the $x$-subproblem in (\ref{eqn:ALM})
is highly nontrivial, and an additional inner solver should be incorporated into the algorithm.
To remedy this drawback, a linearization step can be introduced to modify the $x$-subproblem in (\ref{eqn:ALM})
which leads to the Uzawa-type iteration
\begin{equation}\label{eqn:UZAWA}
\left\{ \begin{array}{l}
\la_{k+1} = \la_k + \rho_k (Ax_{k} -b),\\[1.0ex]
x_{k+1} = \arg \min_{x\in \D(W)}\left\{f(Wx) + \langle \la_{k+1}, Ax - b\rangle\right\}.
\end{array}
\right.
\end{equation}
This method and its variants have been analyzed in \cite{BotHein:2012,JinLu:2014,JinWang:2013}
for ill-posed inverse problems. Unlike (\ref{eqn:ALM}), the resolution of the $x$-subproblem in (\ref{eqn:UZAWA})
only relies on $f(W x)$ and hence is much easier for implementation. For instance, if the sought solution
is sparse, one may take $f(x) = \|x\|_{\ell^1} + \frac{\nu}{2}\|x\|^2$ and $W = I$ the identity,
then the $x$-subproblem in \eqref{eqn:UZAWA} can be solved explicitly by the soft thresholding.
However, if the sought solution is sparse under a transform $W$ that is not identity, which can occur when using
the total variation \cite{RudinOsherFatemi:1992}, the wavelet frame \cite{DongShen:2010,Shen:2010}, and so on,
the $x$-subproblem in (\ref{eqn:UZAWA}) does not have a closed form solution and an inner solver is needed.
In contrast to (\ref{eqn:ALM}) and (\ref{eqn:UZAWA}), our ADMM algorithm (\ref{ADMM}) can admit closed form solutions
for each subproblem for many important applications.

The rest of this paper is organized as follows. In Section 2, we give conditions to guarantee
that (\ref{eq:1}) has a unique solution, show that our ADMM (\ref{ADMM}) is well-defined and
establish an important monotonicity result. When the data is given exactly, we provide various convergence results
of (\ref{ADMM}). When the data contains noise, we propose a stopping rule
to terminate the iteration and show that our ADMM renders into a regularization method.
In Section 3 we report various numerical results to test the efficiency of our ADMM algorithm.
Finally, we draw conclusions in Section 4.

\section{\bf The method and its convergence analysis}

\subsection{Preliminary}
We consider the convex minimization problem (\ref{eq:1}) arising from linear inverse problems,
where $A: \X \to \H$ is a bounded linear operator between two Hilbert spaces $\X$ and $\H$,
$W$ is a linear operator from $\X$ to another Hilbert spaces $\Y$ with domain $\D(W)$, and
$f: \Y \to (-\infty, \infty]$ is a convex function. The inner products and norms on $\mathcal{X}$,
$\mathcal{Y}$ and $\mathcal{H}$ will be simply denoted by $\l \cdot, \cdot\r$ and $\|\cdot\|$ respectively,
which should be clear from the context.
Throughout the paper we will make the following assumptions on the operators $W$, $A$ and the function $f$:
\begin{itemize}
\item[(\textbf{A1})] $A: \X \to \H$ is a bounded linear operator. We use
$A^*: \H\to \X$ to denote its adjoint.

\item[(\textbf{A2})] $f: \mathcal{Y}\to (-\infty, \infty]$ is a proper, lower semi-continuous, strongly convex function
in the sense that there is a constant $c_0>0$ such that
\begin{align}\label{eq:sc}
f(t y_1+ (1-t) y_2) + c_0 t(1-t) \|y_1-y_2\|^2 \le t f(y_1) + (1-t) f(y_2)
\end{align}
for all $y_1, y_2 \in \Y$ and $0\le t\le 1$.

\item[(\textbf{A3})] $W: \X \to \Y$ is a densely defined, closed, linear operator with domain $\D(W)$.

\item[(\textbf{A4})] There is a constant $c_1>0$
such that
$$
\|A x\|^2 + \|W x\|^2 \ge c_1 \|x\|^2, \qquad \forall x\in \D(W).
$$
\end{itemize}

The assumptions (\textbf{A1}) and (\textbf{A2}) are standard.
We will use $\p f(y)$ to denote the subdifferential of $f$ at $y$, i.e.
$$
\p f(y) = \{ \mu\in \Y: f(\bar y) \ge f(y) + \l \mu, \bar y - y\r \mbox{ for all } \bar y \in \Y\}.
$$
Let $\D(\p f) = \{ y \in \mathcal{Y}: \p f(y) \ne \emptyset\}$. Then for $y\in \D(\p f)$ and $\mu \in \p f(y)$
we can introduce
$$
D_{\mu} f(\bar y, y) = f(\bar y) -f(y) -\l \mu, \bar y-y\r, \quad \forall \bar y\in \mathcal{Y}
$$
which is called the Bregman distance induced by $f$ at $y$ in the direction $\mu$, see \cite{Bregman:1967}.
When $f$ is strongly convex in the sense of (\ref{eq:sc}), by definition one can show that
\begin{align}\label{eq:BD}
D_\mu f(\bar y, y) \ge c_0 \|\bar y-y\|^2
\end{align}
for all $\bar y\in \Y$, $y\in \D(\p f)$ and $\mu \in \p f(y)$. Moreover
\begin{align}\label{eq:77}
\l \mu-\bar \mu, y-\bar y\r \ge 2 c_0 \|y-\bar y\|^2
\end{align}
for all $y, \bar y \in \D(\p f)$, $\mu \in \p f(y)$ and $\bar \mu \in \p f(\bar y)$.

The assumptions (\textbf{A3}) and (\textbf{A4}) are standard conditions used in the literature on
regularization methods with differential operators, see \cite{EnglHankeNeubauer:1996,LP:1980,Morozov:1984},
they will be used to show that our ADMM (\ref{ADMM}) is well-defined. The closedness
of $W$ in (\textbf{A3}) implies that $W$ is also weakly closed. We can define the adjoint $W^*$
of $W$ which is also closed and densely defined. Moreover, $z \in \D(W^*)$ if and only if
$
\l W^* z, x\r = \l z, W x\r
$
for all $x\in \D(W)$. A sufficient condition which guarantees (\textbf{A4}) is that
\begin{align}\label{eq:suff}
W \mbox{ has a closed range in } \Y, \quad  \dim \textrm{Ker}\; W < +\infty, \quad
\textrm{Ker}\; W \cap \textrm{Ker}\; A = \{0\};
\end{align}
see \cite[Chapter 8]{EnglHankeNeubauer:1996}. This sufficient condition is important in practice
as it is satisfied by many interesting examples. For instance, consider the following three examples:
\begin{itemize}
\item[(i)] $W = I$ the identity operator with $\D(W) = \X =\Y$.

\item[(ii)] $W$ is a frame transform, i.e., there exist $0< c\leq C<+\infty$ such that
$$
c\|\varphi\|^{2}\leq\|W\varphi\|^{2}\leq C \|\varphi\|^{2}, \quad \forall \varphi \in \X
$$
with $\D(W) = \X = L^2(\Omega)$ and $\Y = \ell^2(\mathbb{N})$.

\item[(iii)] The constant function $\mbox{1}$ is not in the kernel of $A$, $W = \nabla$ the gradient operator with $\X = L^2(\Omega)$, $\D(W) = H^1(\Omega)$ and $\Y = [L^2(\Omega)]^d$.
\end{itemize}
For (i), the conditions (\textbf{A3}) and (\ref{eq:suff}) hold trivially. For (ii), (\textbf{A3}) follows from
\cite[Proposition 12.7]{conway:1990} and (\ref{eq:suff}) follows from the coercivity of $W$ (see e.g. \cite[pp. 107]{conway:1990}).
For (iii), when $\Omega\subset {\mathbb R}^d$ with $d\le 3$ is an open bounded domain with Lipschitz boundary,
(\textbf{A3}) follows from the definition of weak derivatives. Note that $\textrm{Ker}\; W$ is a one dimension
subspace. This fact together with the Helmholtz-Hodge decomposition (see \cite[Theorem 3.4]{GiraultRaviart:1979})
implies (\ref{eq:suff}).

Under the assumptions (\textbf{A1})--(\textbf{A4}), the following result shows that the minimization problem (\ref{eq:1})
admits a unique solution whenever $b$ is consistent in the sense that $b= Ax$ for some $x\in \D(W)$ with $W x \in \D(f)$.

\begin{theorem}\label{thm:exist}
Let $b$ in (\ref{eq:1}) be consistent and let \emph{(\textbf{A1})--(\textbf{A4})} hold. Then the optimization problem \eqref{eq:1}
admits a unique solution $x^* \in \D(W)$ with $W x^* \in \D(f)$.
\end{theorem}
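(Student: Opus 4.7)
The plan is to establish existence by producing a strongly convergent minimizing sequence and uniqueness via a midpoint convexity argument; assumption (\textbf{A4}) is the key ingredient that transfers control of $Wx$ back to control of $x$.

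Since $b$ is consistent, the feasible set $S := \{x \in \D(W) : Ax = b,\ Wx \in \D(f)\}$ is non-empty, and because any strongly convex, proper, lower semicontinuous function on a Hilbert space is bounded below, $m^* := \inf_{x \in S} f(Wx)$ is finite. Let $\{x_n\} \subset S$ be a minimizing sequence. Since $\D(W)$ is a subspace (by (\textbf{A3})) and $\D(f)$ is convex, the midpoint $(x_n + x_m)/2$ again lies in $S$, so $f(W(x_n + x_m)/2) \geq m^*$. Combining this with the strong convexity inequality (\ref{eq:sc}) at $t = 1/2$ applied to $Wx_n$ and $Wx_m$ yields
$$\frac{c_0}{4}\|Wx_n - Wx_m\|^2 \leq \tfrac{1}{2}\bigl(f(Wx_n) + f(Wx_m)\bigr) - m^* \longrightarrow 0,$$
so $\{Wx_n\}$ is Cauchy in $\Y$ and converges to some $y^*$.

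The next step, which I expect to be the main obstacle, is to upgrade this into strong convergence of $\{x_n\}$ itself. Since $A(x_n - x_m) = 0$ for all $m, n$, (\textbf{A4}) gives $c_1 \|x_n - x_m\|^2 \leq \|W(x_n - x_m)\|^2 \to 0$, so $x_n \to x^*$ in $\X$. Continuity of $A$ then forces $Ax^* = b$; the closedness of $W$ from (\textbf{A3}), combined with the strong convergence of both $x_n$ and $Wx_n$, yields $x^* \in \D(W)$ with $Wx^* = y^*$; and the lower semicontinuity of $f$ gives $f(Wx^*) \leq \liminf_n f(Wx_n) = m^*$, so $x^*$ attains the minimum. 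Uniqueness follows by the same pair of inequalities: if $x_1, x_2 \in S$ both realize $m^*$, then (\ref{eq:sc}) applied to the feasible midpoint $(x_1 + x_2)/2$ forces $Wx_1 = Wx_2$, and combined with $A(x_1 - x_2) = 0$, (\textbf{A4}) yields $x_1 = x_2$. A purely weak-compactness route would also work but would require leaning on the weak closedness of $W$; the strong-convexity shortcut above takes direct advantage of (\textbf{A4}) and reduces everything to strong limits.
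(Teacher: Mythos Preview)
Your argument is correct, and it follows a genuinely different route from the paper's. The paper uses the standard weak-compactness pattern: strong convexity of $f$ gives coercivity, hence $\{W z_k\}$ is bounded; then (\textbf{A4}) makes $\{z_k\}$ bounded; a weakly convergent subsequence is extracted, and the conclusion comes from the weak closedness of $W$ and the weak lower semicontinuity of $f$. You instead squeeze more out of the strong convexity: the midpoint inequality applied to feasible points makes $\{W x_n\}$ Cauchy, and since $A(x_n-x_m)=0$, (\textbf{A4}) immediately makes $\{x_n\}$ Cauchy, so everything converges strongly and only the ordinary closedness of $W$ and sequential lower semicontinuity of $f$ are needed.

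Your approach is more elementary here and avoids weak-topology machinery entirely; it also yields strong convergence of the full minimizing sequence rather than a subsequence. The paper's route is the more general template (it would survive with mere coercivity in place of strong convexity), but under the standing hypothesis (\textbf{A2}) your shortcut is arguably cleaner and makes the role of (\textbf{A4}) more transparent. Both uniqueness arguments are essentially the same.
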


\begin{proof}
Let $f_*: = \inf \{f(Wz): Az = b, \, z\in \D(W)\}$. Since $b$ is consistent, we have $f_*<\infty$.
Let $\{z_k\}$ be the minimizing sequence such that
\begin{equation*}
z_k \in \D(W), \quad A z_k = b \quad \mbox{and} \quad \lim_{k\rightarrow \infty} f(W z_k) = f_*.
\end{equation*}
By (\textbf{A2}), $f$ is strongly convex and hence is coercive,  see e.g. \cite[Proposition 11.16]{BauschkeCombettes:2011}.
Thus $\{W z_k\} $ is bounded in $\Y$. In view of (\textbf{A4}),
$\{z_k\}$ is bounded in $\X$. Therefore, $\{z_k\}$ has a subsequence, which is denoted
by the same notation, such that
$$
z_k \rightarrow x^* \textrm{  weakly in  }\X,\quad W z_k \rightarrow y^* \textrm{  weakly in  }\Y.
$$
By using (\textbf{A3}) and $\{z_k\}\subset \D(W)$, we have $x^*\in \D(W)$ and $y^* = W x^*$.
Since $f$ is convex and lower semi-continuous, $f$ is also weakly lower semi-continuous
(see \cite[Proposition 10.23]{BauschkeCombettes:2011}). Thus
$f(W x^*) \le \liminf_{k\rightarrow \infty} f(W z_k) = f_*$ and hence $x^*$ is an optimal
solution of (\ref{eq:1}). The uniqueness follows by the strong convexity of $f$ and (\textbf{A4}).
\end{proof}

\subsection{ADMM algorithm and basic estimates}

As described in the introduction, by introducing an additional variable $y=W x$, we can reformulate
\eqref{eq:1} into the equivalent form (\ref{eq:2}). Recall the augmented Lagrangian function \eqref{eq:lag},
our ADMM (\ref{ADMM}) starts from some initial guess $y_0\in \Y$, $\la_0\in \H$, $\mu_0 \in \Y$ and defines
\begin{align}
x_{k+1} & = \arg\min_{x\in \D(W)} \left\{\l \la_k, A x\r + \l \mu_k, Wx\r + \frac{\rho_1}{2} \|A x-b\|^2
   + \frac{\rho_2}{2}\|Wx-y_k\|^2\right\}, \label{eq:3}\\
y_{k+1} & = \arg\min_{y\in \Y} \left\{f(y) - \l \mu_k, y\r + \frac{\rho_2}{2}\|Wx_{k+1}-y\|^2\right\}, \label{eq:4}\\
\la_{k+1} & = \la_k + \rho_1 (A x_{k+1} -b), \label{eq:5}\\
\mu_{k+1} & = \mu_k + \rho_2(Wx_{k+1}-y_{k+1}) \label{eq:6}
\end{align}
for $k =0,1, \cdots$, where $\rho_1$ and $\rho_2$ are two fixed positive constants.

We need to show that $x_{k+1}$ and $y_{k+1}$ are well-defined. Note that (\ref{eq:3}) and (\ref{eq:4})
can be written as
\begin{align*}
x_{k+1} & = \arg\min_{x\in \D(W)} \left\{\frac{\rho_1}{2} \|A x-b+\la_k/\rho_1\|^2
   + \frac{\rho_2}{2}\|Wx-y_k+\mu_k/\rho_2\|^2\right\},\\
y_{k+1} & = \arg\min_{y\in \Y} \left\{f(y) + \frac{\rho_2}{2}\|y-Wx_{k+1}-\mu_k/\rho_2\|^2\right\}.
\end{align*}
Therefore, the well-posedness of $x_{k+1}$ and $y_{k+1}$ follows from the  following result.

\begin{lemma}\label{lem:w2}
Let Assumptions \emph{(\textbf{A1})--(\textbf{A4})} hold.

\begin{itemize}
\item[(i)] For any $h\in \H$ and $v\in \Y$, the minimization problem
\begin{align}
\min_{z\in\D(W)} \left\{\frac{\rho_1}{2} \|A z - h\|^2 + \frac{\rho_2}{2} \|Wz - v\|^2\right\} \label{eq:tmx}
\end{align}
admits a unique solution $z$. Moreover, $z$ and $W z$ depend continuously on $h$ and $v$.

\item[(ii)] For any $v\in \Y$ the minimization problem
\begin{align}
\min_{y \in\Y} \left\{f(y) + \frac{\rho_2}{2} \|y - v\|^2\right\} \label{eq:tmy}
\end{align}
admits a unique solution $y$. Moreover, $y$ and $f(y)$ depend continuously on $v$.
\end{itemize}
\end{lemma}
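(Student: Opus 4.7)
For part (i), denote $F(z):=\tfrac{\rho_1}{2}\|Az-h\|^2+\tfrac{\rho_2}{2}\|Wz-v\|^2$ and follow the standard direct method. My plan is first to observe that (\textbf{A4}) gives coercivity of $F$ on $\D(W)$: since $F(z)$ controls $\|Az\|$ and $\|Wz\|$ (up to constants depending on $h,v$), it follows from (\textbf{A4}) that $F(z)\to \infty$ as $\|z\|\to\infty$. Given a minimizing sequence $\{z_n\}\subset \D(W)$, both $\{z_n\}$ and $\{Wz_n\}$ are bounded, so we may extract a subsequence with $z_n\rightharpoonup z^*$ in $\X$ and $Wz_n\rightharpoonup w^*$ in $\Y$. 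Because $W$ is linear and closed, it is weakly closed (the graph is a closed subspace, hence weakly closed), which yields $z^*\in\D(W)$ and $w^*=Wz^*$. Weak lower semicontinuity of the squared norm then gives $F(z^*)\le \liminf F(z_n)=\inf F$, proving existence. Uniqueness follows because if $z_1,z_2$ are both minimizers then convexity of $F$ together with (\textbf{A4}) forces $\|A(z_1-z_2)\|^2+\|W(z_1-z_2)\|^2=0$, hence $z_1=z_2$.

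For the continuity claim in (i), I plan to use the first-order optimality condition
\[
\rho_1\langle Az_i-h_i,Az\rangle+\rho_2\langle Wz_i-v_i,Wz\rangle=0,\qquad \forall z\in\D(W),
\]
valid for the minimizers $z_1,z_2$ corresponding to $(h_1,v_1)$ and $(h_2,v_2)$. Subtracting and testing with $z=z_1-z_2$, then applying Cauchy--Schwarz and Young's inequality, produces
\[
\rho_1\|A(z_1-z_2)\|^2+\rho_2\|W(z_1-z_2)\|^2\le \rho_1\|h_1-h_2\|^2+\rho_2\|v_1-v_2\|^2.
\]
Invoking (\textbf{A4}) once more converts the left-hand side into a lower bound proportional to $\|z_1-z_2\|^2$, giving Lipschitz dependence of both $z$ and $Wz$ on the data.

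For part (ii), set $g(y):=f(y)+\tfrac{\rho_2}{2}\|y-v\|^2$. Since $f$ is proper, lsc and strongly convex by (\textbf{A2}), $g$ inherits these properties, and the quadratic penalty makes $g$ coercive on $\Y$. Existence then follows from a minimizing sequence argument (weak compactness + weak lsc of the convex function $g$), and uniqueness from strong convexity. To obtain continuous dependence of $y$ on $v$, the optimality condition reads $\rho_2(v-y)\in\partial f(y)$; applying it to minimizers $y_1,y_2$ associated with $v_1,v_2$ and using the strong monotonicity estimate \eqref{eq:77} on $\partial f$ yields
\[
2c_0\|y_1-y_2\|^2\le \langle \rho_2(v_1-y_1)-\rho_2(v_2-y_2),\,y_1-y_2\rangle,
\]
which rearranges to $\|y_1-y_2\|\le \tfrac{\rho_2}{\rho_2+2c_0}\|v_1-v_2\|$.

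Finally, to show that $f(y)$ also depends continuously on $v$, let $v_n\to v^*$ with minimizers $y_n\to y^*$. Lower semicontinuity of $f$ gives $\liminf f(y_n)\ge f(y^*)$; for the reverse inequality I plan to exploit optimality of $y_n$ against the competitor $y^*$, namely $f(y_n)+\tfrac{\rho_2}{2}\|y_n-v_n\|^2\le f(y^*)+\tfrac{\rho_2}{2}\|y^*-v_n\|^2$, and pass to the limit to conclude $\limsup f(y_n)\le f(y^*)$. The main technical delicacy I anticipate is justifying the weak-closure step for the unbounded operator $W$ in part (i); everything else is a routine combination of strong convexity, subdifferential calculus, and (\textbf{A4}).
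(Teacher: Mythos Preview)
Your argument is correct in all its essential steps: the coercivity/weak-closedness route for existence in (i), the optimality-condition subtraction for Lipschitz continuity, the strong-monotonicity estimate for (ii), and the optimality comparison against the competitor $y^*$ to get $\limsup f(y_n)\le f(y^*)$ all go through as written. The weak-closure step for $W$ that you flagged is indeed harmless, since the graph of a closed linear operator is a closed subspace and therefore weakly closed.

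The paper itself takes a very different route: it does not prove the lemma at all but simply cites external results (Morozov's monograph for (i) and another paper for (ii)). Your proof is thus considerably more informative and self-contained. What your approach buys is that a reader does not need access to those references, and it yields explicit Lipschitz constants ($\rho_1\|A(z_1-z_2)\|^2+\rho_2\|W(z_1-z_2)\|^2\le \rho_1\|h_1-h_2\|^2+\rho_2\|v_1-v_2\|^2$ in (i) and $\|y_1-y_2\|\le \tfrac{\rho_2}{\rho_2+2c_0}\|v_1-v_2\|$ in (ii)) rather than mere continuity. What the paper's citation approach buys is brevity, since the result is standard in the regularization literature.
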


\begin{proof}
(i) follows from \cite[pp. 23 Theorem 4 and pp. 26 Theorem 6]{Morozov:1993} and (ii) follows from \cite[Lemma 2.2]{JY2015}.
\end{proof}

We now take a closer look at the ADMM algorithm (\ref{eq:3})--(\ref{eq:6}).
From (\ref{eq:3}) it follows that $x_{k+1} \in \D(W)$ satisfies the optimality condition
$$
\l A^*\la_k + \rho_1 A^*(Ax_{k+1}-b), x\r + \l \mu_k+ \rho_2(W x_{k+1}-y_k), W x\r =0, \quad x\in \D(W).
$$
This implies that $\mu_k +\rho_2(W x_{k+1}-y_k) \in \D(W^*)$ and
\begin{align}\label{eq:7}
A^* \la_k + \rho_1A^*(A x_{k+1}-b)+ W^*[\mu_k+\rho_2(Wx_{k+1}-y_k)]=0.
\end{align}
From (\ref{eq:4}) we can obtain that
\begin{align}\label{eq:8}
0 \in \p f(y_{k+1}) -\mu_k -\rho_2(Wx_{k+1}-y_{k+1}).
\end{align}
For simplicity of exposition, we introduce the residuals
$$
r_k = A x_k -b \quad \mbox{and} \quad  s_k = Wx_k-y_k, \quad k =1,2, \cdots.
$$
It then follows from (\ref{eq:5}), (\ref{eq:6}), (\ref{eq:7}) and (\ref{eq:8}) that
\begin{align}
\la_{k+1} -\la_k & = \rho_1 r_{k+1}, \label{eq:9}\\
\mu_{k+1} -\mu_k & = \rho_2 s_{k+1}, \label{eq:10}\\
\mu_{k+1} & \in \p f(y_{k+1}), \label{eq:11}\\
A^* \la_k +\rho_1A^* r_{k+1}& = -W^* [\mu_k+\rho_2 (W x_{k+1}-y_k)] \label{eq:12}
\end{align}
for $k=0,1, \cdots$.

\begin{lemma}\label{lem1}
There holds $A^*\la_1= W^*[\rho_2 (y_0-y_1)-\mu_1]$. Moreover, for $k\ge 1$ there holds
$
\rho_1A^* r_{k+1}  = \rho_2W^*[(y_k-y_{k+1}) -(y_{k-1}-y_k)-s_{k+1}],
$
that is
$$
\rho_1\l r_{k+1}, A x\r  = \rho_2\l (y_k-y_{k+1}) -(y_{k-1}-y_k)-s_{k+1}, W x\r
$$
for all $x \in \D(W)$.
\end{lemma}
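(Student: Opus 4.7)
The plan is to derive both assertions directly from the relations (\ref{eq:9})--(\ref{eq:12}), by eliminating $\la$ in favor of the primal residual (using (\ref{eq:9})) and by rewriting $\mu_k+\rho_2(Wx_{k+1}-y_k)$ in terms of $\mu_{k+1}$ (using (\ref{eq:10})). The key algebraic workhorse is the identity
\[
\mu_k+\rho_2(Wx_{k+1}-y_k)=\mu_{k+1}+\rho_2(y_{k+1}-y_k),
\]
which is immediate from $\mu_{k+1}-\mu_k=\rho_2 s_{k+1}=\rho_2(Wx_{k+1}-y_{k+1})$. Notice this already membership-certifies $\mu_{k+1}+\rho_2(y_{k+1}-y_k)$ as an element of $\D(W^*)$, since $\mu_k+\rho_2(Wx_{k+1}-y_k)\in\D(W^*)$ by (\ref{eq:12}).

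For the first assertion I apply (\ref{eq:12}) at $k=0$. The left side becomes $A^*(\la_0+\rho_1 r_1)=A^*\la_1$ by (\ref{eq:9}), while the workhorse identity rewrites the right side as $-W^*[\mu_1+\rho_2(y_1-y_0)]=W^*[\rho_2(y_0-y_1)-\mu_1]$, which is precisely the claimed formula.

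For the second assertion, with $k\ge 1$, I subtract (\ref{eq:12}) at index $k-1$ from (\ref{eq:12}) at index $k$. On the left, $A^*\la_k-A^*\la_{k-1}$ cancels $\rho_1 A^*r_k$ thanks to (\ref{eq:9}), leaving $\rho_1 A^*r_{k+1}$. On the right, the two bracketed expressions both lie in $\D(W^*)$, so their difference does too, and it expands to $(\mu_k-\mu_{k-1})+\rho_2(Wx_{k+1}-Wx_k)-\rho_2(y_k-y_{k-1})$. Using (\ref{eq:10}) to replace $\mu_k-\mu_{k-1}$ by $\rho_2 s_k=\rho_2(Wx_k-y_k)$ and then substituting $Wx_{k+1}=y_{k+1}+s_{k+1}$, the bracket collapses to $\rho_2\bigl[-(y_k-y_{k+1})+(y_{k-1}-y_k)+s_{k+1}\bigr]$, and applying $-W^*$ produces exactly the stated expression. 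The equivalent inner-product form follows at once from the definition of the adjoint on $\D(W)$. The whole argument is essentially bookkeeping; the only point requiring care is tracking membership in $\D(W^*)$, but this is automatic because every quantity pushed through $W^*$ is built from terms already certified as elements of $\D(W^*)$ by (\ref{eq:12}) at the relevant index.
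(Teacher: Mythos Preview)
Your proof is correct and follows essentially the same approach as the paper: both arguments combine (\ref{eq:9}), (\ref{eq:10}) and (\ref{eq:12}) to eliminate $\la_k$ and rewrite $\mu_k+\rho_2(Wx_{k+1}-y_k)$ in terms of $\mu_{k+1}$. The only cosmetic difference is that the paper first records the intermediate identity $A^*\la_{k+1}=W^*[\rho_2(y_k-y_{k+1})-\mu_{k+1}]$ for all $k\ge 0$ and then differences it at consecutive indices, whereas you difference (\ref{eq:12}) directly; the algebra and the handling of $\D(W^*)$ are the same.
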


\begin{proof}
From (\ref{eq:9}) we have
$
A^* \la_{k+1}-A^* \la_k = \rho_1 A^* r_{k+1}
$
for $k\ge 0$. This together with (\ref{eq:10}) and (\ref{eq:12}) gives for $k\ge 0$ that
\begin{align}\label{eq:13}
A^*\la_{k+1} &= A^* \la_k  +\rho_1 A^* r_{k+1} + W^*(\rho_2 s_{k+1}+\mu_k-\mu_{k+1}) \nonumber\\
&= W^*[\rho_2(y_{k}-y_{k+1})-\mu_{k+1}]
\end{align}
which in particular implies $A^*\la_1 =W^*[\rho_2(y_0-y_1)-\mu_1]$. Further, by using (\ref{eq:9}),
(\ref{eq:13}) and (\ref{eq:10}) we have
\begin{align*}
\rho_1A^* r_{k+1} &=A^* \la_{k+1}-A^* \la_k =W^*\left[\rho_2 (y_k-y_{k+1})-\rho_2(y_{k-1}-y_k) +\mu_k-\mu_{k+1}\right]\\
& = \rho_2W^*[(y_k-y_{k+1}) -(y_{k-1}-y_k)-s_{k+1}]
\end{align*}
for $k\ge 1$, which completes the proof.
\end{proof}

The following monotonicity result plays an essential role in the forthcoming convergence analysis.

\begin{lemma}\label{lem3}
Let $E_k =\rho_1 \|r_{k}\|^2 +\rho_2\|s_k\|^2 +\rho_2 \|y_k-y_{k-1}\|^2$. Then
$$
E_{k+1} - E_k \le - \rho_1\|r_{k+1}-r_k\|^2 - 4c_0 \|y_{k+1}-y_k\|^2
$$
for all $k\ge 1$. In particular, $E_k$ is monotonically decreasing along the iteration
and $\sum_{k=1}^\infty \|y_{k+1}-y_{k}\|^2<\infty$.
\end{lemma}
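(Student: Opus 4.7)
The plan is to expand $E_{k+1}-E_k$ term by term using the elementary identity $\|a\|^2-\|b\|^2 = 2\langle a,a-b\rangle - \|a-b\|^2$ applied with $(a,b)=(r_{k+1},r_k)$ and $(s_{k+1},s_k)$. This immediately pulls out the desired $-\rho_1\|r_{k+1}-r_k\|^2$ contribution (and leaves a leftover $-\rho_2\|s_{k+1}-s_k\|^2$), reducing everything to the task of estimating the cross terms $2\rho_1\langle r_{k+1}-r_k,r_{k+1}\rangle$ and $2\rho_2\langle s_{k+1}-s_k,s_{k+1}\rangle$ together with the telescoping bracket $\rho_2(\|y_{k+1}-y_k\|^2-\|y_k-y_{k-1}\|^2)$.

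The key step is to exchange the $r$-cross term for an expression in $y$ and $s$ variables. Writing $r_{k+1}-r_k = A(x_{k+1}-x_k)$ with $x_{k+1}-x_k\in\D(W)$, I would apply Lemma \ref{lem1} (valid for $k\ge 1$, precisely matching the range of the claim) to rewrite
$$
\rho_1\langle r_{k+1}-r_k,r_{k+1}\rangle = \langle \rho_1 A^* r_{k+1},\,x_{k+1}-x_k\rangle = \rho_2\langle (y_k-y_{k+1})-(y_{k-1}-y_k)-s_{k+1},\,W(x_{k+1}-x_k)\rangle.
$$
Decomposing $W(x_{k+1}-x_k) = (s_{k+1}-s_k)+(y_{k+1}-y_k)$ reduces the whole right-hand side of $E_{k+1}-E_k+\rho_1\|r_{k+1}-r_k\|^2$ to a sum of inner products in the four quantities $s_{k+1}-s_k$, $s_{k+1}$, $a:=y_{k+1}-y_k$, $b:=y_k-y_{k-1}$.

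After this substitution I expect the algebra to collapse in two moves: first, the identity $\rho_2\|b-a\|^2 + 2\rho_2\langle b-a,a\rangle + \rho_2(\|a\|^2-\|b\|^2)=0$ kills the $y$-bracket against part of the new cross terms; second, Young's inequality $2\rho_2\langle b-a,s_{k+1}-s_k\rangle \le \rho_2\|b-a\|^2+\rho_2\|s_{k+1}-s_k\|^2$ exactly absorbs the leftover $-\rho_2\|s_{k+1}-s_k\|^2$. The net result should be the clean bound
$$
E_{k+1}-E_k+\rho_1\|r_{k+1}-r_k\|^2 \;\le\; -2\rho_2\langle s_{k+1},y_{k+1}-y_k\rangle.
$$
Using \eqref{eq:10} to write $\rho_2 s_{k+1}=\mu_{k+1}-\mu_k$, and invoking \eqref{eq:11} together with the strong-convexity inequality \eqref{eq:77}, gives $\rho_2\langle s_{k+1},y_{k+1}-y_k\rangle = \langle \mu_{k+1}-\mu_k,y_{k+1}-y_k\rangle \ge 2c_0\|y_{k+1}-y_k\|^2$, completing the inequality. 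Monotonicity of $\{E_k\}$ follows from nonpositivity of the right-hand side, and summability $\sum_{k\ge 1}\|y_{k+1}-y_k\|^2<\infty$ comes from telescoping against $E_1\ge E_k\ge 0$.

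I expect the main obstacle to be purely bookkeeping: after substituting via Lemma \ref{lem1} one faces a six-term inner product expansion in $(a,b,s_{k+1},s_{k+1}-s_k)$, and it is not obvious in advance that the algebraically awkward parts cancel exactly. Identifying the right grouping — namely, pairing the $\|b-a\|^2$, $2\langle b-a,a\rangle$, $\|a\|^2-\|b\|^2$ block into a perfect square and leaving precisely $-2\rho_2\langle s_{k+1},a\rangle$ to be absorbed by strong convexity — is the only delicate point; the remaining ingredients are direct consequences of \eqref{eq:9}--\eqref{eq:12} and assumption (\textbf{A2}).
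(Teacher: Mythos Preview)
Your proposal is correct and follows essentially the same route as the paper's proof: both expand $E_{k+1}-E_k$ via the polarization identity, invoke Lemma~\ref{lem1} with $x=x_{k+1}-x_k$ to convert $\rho_1\l r_{k+1},A(x_{k+1}-x_k)\r$ into a $\Y$-inner product, decompose $W(x_{k+1}-x_k)=(s_{k+1}-s_k)+(y_{k+1}-y_k)$, use a quadratic inequality to absorb the $\|s_{k+1}-s_k\|^2$ term, and close with \eqref{eq:10}--\eqref{eq:11} and \eqref{eq:77}. The only cosmetic difference is ordering: the paper applies \eqref{eq:77} immediately after combining the two cross terms and then bounds the remaining $\rho_2\l (s_{k+1}-s_k)+(y_{k+1}-y_k),\,(y_k-y_{k+1})-(y_{k-1}-y_k)\r$ via the perfect-square inequality $\tfrac12\|u+a-b\|^2\ge 0$, whereas you first complete the algebra to isolate $-2\rho_2\l s_{k+1},y_{k+1}-y_k\r$ and apply \eqref{eq:77} at the end; the two arguments are equivalent.
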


\begin{proof}
By the definition of $r_{k}$ and $s_k$ we have
\begin{align}
r_{k+1} - r_k & = A(x_{k+1}-x_k), \label{eq:14}\\
s_{k+1} - s_k & = W(x_{k+1}-x_k)+(y_k-y_{k+1}). \label{eq:15}
\end{align}
Therefore
\begin{align*}
& \rho_1\left\l r_{k+1}-r_k, r_{k+1}\right\r + \rho_2 \left\l s_{k+1}-s_k, s_{k+1}\right\r \\
& = \rho_1\left\l A (x_{k+1}-x_k), r_{k+1}\right\r + \rho_2 \left\l W(x_{k+1}-x_k)+(y_k-y_{k+1}), s_{k+1}\right\r.
\end{align*}
Recall that $x_{k+1}-x_k \in \D(W)$. We may use Lemma \ref{lem1}, (\ref{eq:10}), (\ref{eq:11}) and (\ref{eq:77}) to derive that
\begin{align*}
& \rho_1\l r_{k+1}-r_k, r_{k+1}\r + \rho_2\l s_{k+1}-s_k, s_{k+1}\r \\
& = \rho_2\l W(x_{k+1}-x_k), (y_k-y_{k+1})-(y_{k-1}-y_k)\r -\l y_{k+1}-y_k, \mu_{k+1}-\mu_k\r\\
& \le \rho_2\l W(x_{k+1}-x_k), (y_k-y_{k+1})-(y_{k-1}-y_k)\r -2 c_0\| y_{k+1}-y_k\|^2.
\end{align*}
In view of (\ref{eq:15}) and the Cauchy-Schwarz inequality, we have
\begin{align*}
& \rho_1\l r_{k+1}-r_k, r_{k+1}\r + \rho_2\l s_{k+1}-s_k, s_{k+1}\r \\
& \le \rho_2\l (s_{k+1}-s_k) +(y_{k+1}-y_k), (y_k-y_{k+1})-(y_{k-1}-y_k)\r -2 c_0\| y_{k+1}-y_k\|^2,\\
& \le \frac{\rho_2}{2} \left(\|s_{k+1}-s_k\|^2 - \|y_{k+1}-y_k\|^2 +\|y_k-y_{k-1}\|^2\right)-2c_0\| y_{k+1}-y_k\|^2.
\end{align*}
By virtue of the identity $2\l a-b, a\r= \|a\|^2 -\|b\|^2 + \|a-b\|^2$, we therefore obtain
\begin{align*}
\rho_1\|r_{k+1}\|^2 +\rho_2\|s_{k+1}\|^2
& \le \rho_1\|r_k\|^2 +\rho_2\|s_k\|^2 -\rho_1\|r_{k+1}-r_k\|^2 -\rho_2\|y_{k+1}-y_k\|^2 \\
& \quad \, +\rho_2\|y_k-y_{k-1}\|^2 -4 c_0\| y_{k+1}-y_k\|^2.
\end{align*}
This shows the desired inequality.
\end{proof}

\subsection{Exact data case}

In this subsection we will give the convergence analysis of the ADMM algorithm (\ref{eq:3})--(\ref{eq:6})
under the condition that the data $b$ is consistent so that (\ref{eq:1})
has a unique solution. We will always use $(\hat{x}, \hat{y})$ to represent any feasible point of \eqref{eq:2},
i.e., $\hat x \in \D(W)$ and $\hat{y}\in \D(f)$ such that $A \hat x =b$ and $W\hat{x}=\hat{y}$.

\begin{lemma}\label{lem4}
The sequences $\{x_k\}$ and $\{y_k\}$ are bounded and
$$
\sum_{k=1}^\infty \left\{ D_{\mu_k} f(y_{k+1}, y_k) + E_k \right\} <\infty.
$$
In particular, $A x_k \rightarrow b$, $Wx_k-y_k\rightarrow 0$ and $y_{k+1}-y_k\rightarrow 0$
as $k\rightarrow \infty$.
\end{lemma}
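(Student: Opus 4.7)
The plan is to construct a nonnegative Lyapunov functional whose per-step decrease dominates both $E_{k+1}$ and $D_{\mu_{k+1}}f$, and then to telescope. As setup, I would first invoke Theorem~\ref{thm:exist} to pick the unique minimizer $\hat{x}$ of (\ref{eq:1}) and set $\hat{y}=W\hat{x}$; since $\hat{x}$ is optimal and $f$ is strongly convex, one can select a subgradient $\hat{\mu}\in\partial f(\hat{y})$, giving a reference feasible triple. Throughout, I would use $A\hat{x}=b$ and $W\hat{x}=\hat{y}$ freely, together with the recursions \eqref{eq:9}--\eqref{eq:10} and the key identity \eqref{eq:13} from Lemma~\ref{lem1}.

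The natural candidate Lyapunov function is
\begin{equation*}
\Phi_k \;=\; \frac{1}{2\rho_1}\|\lambda_k\|^2 \;+\; \frac{1}{2\rho_2}\|\mu_k-\hat{\mu}\|^2 \;+\; \frac{\rho_2}{2}\|y_k-\hat{y}\|^2,
\end{equation*}
which is nonnegative. The plan is to expand $\Phi_{k+1}-\Phi_k$ via the identity $\|a+b\|^2-\|a\|^2=2\langle a,b\rangle+\|b\|^2$, so that each term produces a dissipation $-\tfrac{\rho_1}{2}\|r_{k+1}\|^2$, $-\tfrac{\rho_2}{2}\|s_{k+1}\|^2$ together with inner products $\langle\lambda_{k+1},r_{k+1}\rangle$, $\langle\mu_{k+1}-\hat{\mu},s_{k+1}\rangle$, and $\rho_2\langle y_k-\hat{y},y_{k+1}-y_k\rangle$. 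I would then substitute the bilinear form of Lemma~\ref{lem1} evaluated against $x_{k+1}-\hat{x}\in\D(W)$ to rewrite $\langle\lambda_{k+1},r_{k+1}\rangle$ purely as inner products in $\Y$, using $W(x_{k+1}-\hat{x})=s_{k+1}+(y_{k+1}-\hat{y})$. A careful pairing then makes the mixed terms in $(y_{k+1}-\hat{y})$ and $(y_{k+1}-y_k)$ collapse into $-\tfrac{\rho_2}{2}\|y_{k+1}-y_k\|^2$.

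At this point the strong-monotonicity estimate \eqref{eq:77} applied to $\mu_{k+1}\in\partial f(y_{k+1})$ and $\hat{\mu}\in\partial f(\hat{y})$ produces the dissipation $-2c_0\|y_{k+1}-\hat{y}\|^2$; Young's inequality absorbs the indefinite term $\rho_2\langle y_k-y_{k+1},s_{k+1}\rangle$ against the surviving $\|y_{k+1}-y_k\|^2$ and $\|s_{k+1}\|^2$ contributions. The target bound is
\begin{equation*}
\Phi_{k+1}-\Phi_k \;\le\; -\,c\,\bigl(E_{k+1} + D_{\mu_{k+1}}f(y_{k+2},y_{k+1})\bigr)
\end{equation*}
for some $c>0$ depending on $\rho_1,\rho_2,c_0$, possibly after combining with the auxiliary dissipation furnished by Lemma~\ref{lem3}. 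Telescoping then yields the summability. Since $\Phi_k\le \Phi_1$ forces $\|y_k-\hat{y}\|$ bounded, and since $\|Ax_k\|\le \|r_k\|+\|b\|$ and $\|Wx_k\|\le\|s_k\|+\|y_k\|$ are bounded by monotonicity of $E_k$ from Lemma~\ref{lem3}, assumption (\textbf{A4}) gives boundedness of $\{x_k\}$; and $\sum E_k<\infty$ implies $E_k\to 0$, hence $r_k\to 0$, $s_k\to 0$ and $y_{k+1}-y_k\to 0$.

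The main obstacle is the cross term $-\langle\hat{\mu},W(x_{k+1}-\hat{x})\rangle$ that emerges after substituting the Lemma~\ref{lem1} identity: because the existence of a Lagrange multiplier for \eqref{eq:2} is not assumed, $W^*\hat{\mu}$ need not lie in $\mathcal{R}(A^*)$ and this contribution has no definite sign, nor can it be merged with $\|\lambda_{k+1}\|^2$ in the classical ADMM way. Resolving it is the delicate point of the proof — I expect one must either augment $\Phi_k$ by an extra term designed so that these inner products telescope across iterations, or exploit the additional dissipation $\rho_1\|r_{k+1}-r_k\|^2+4c_0\|y_{k+1}-y_k\|^2$ from Lemma~\ref{lem3} to soak up the indefinite remainder.
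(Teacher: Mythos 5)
You have correctly located the crux, but you have not crossed it: the term $-\langle \hat{\mu}, W(x_{k+1}-\hat{x})\rangle$ that you flag at the end is not a technicality to be absorbed later — it is precisely the obstruction that forces the paper to abandon the classical Lyapunov function you propose. Writing $W(x_{k+1}-\hat{x}) = s_{k+1} + (y_{k+1}-\hat{y})$, the offending contribution contains $-\sum_k \langle \hat{\mu}, y_{k+1}-\hat{y}\rangle$, and since $y_k$ converges to $y^*$ (generally $\ne \hat{y}$, and even when $\hat{y}=y^*$ the norms $\|y_{k+1}-y^*\|$ need not be summable), this sum neither telescopes nor is dominated by the dissipation $\rho_1\|r_{k+1}-r_k\|^2 + 4c_0\|y_{k+1}-y_k\|^2$ from Lemma \ref{lem3}. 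In the classical argument this term is cancelled by introducing a dual multiplier $\hat{\lambda}$ with $A^*\hat{\lambda} + W^*\hat{\mu} = 0$ and replacing $\|\lambda_k\|^2$ by $\|\lambda_k-\hat{\lambda}\|^2$; that is exactly the hypothesis the paper refuses to make. So your two fallback suggestions do not close the argument, and the proof as proposed has a genuine gap.

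The paper's route avoids $\hat{\mu}$ and $\hat{\lambda}$ altogether. Its ``energy'' is $D_{\mu_k} f(\hat{y}, y_k)$, the Bregman distance at the iterate $y_k$ in the direction of the subgradient $\mu_k \in \partial f(y_k)$ that the algorithm itself supplies via \eqref{eq:11}; this is nonnegative and satisfies the three-point identity
$D_{\mu_{k+1}} f(\hat{y}, y_{k+1}) - D_{\mu_k} f(\hat{y}, y_k) + D_{\mu_k} f(y_{k+1}, y_k) = \langle \mu_k - \mu_{k+1}, \hat{y}-y_{k+1}\rangle$,
into which \eqref{eq:10} and Lemma \ref{lem1} are substituted to produce $-\rho_1\|r_{k+1}\|^2 - \rho_2\|s_{k+1}\|^2$ plus cross terms of the form $\rho_2\langle y_{k-1}-y_k, W(\hat{x}-x_{k+1})\rangle$ only — no reference subgradient ever appears. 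These cross terms do not have a sign, and the paper does not obtain a monotone descent as your target inequality demands; instead it sums over $k$, bounds the non-telescoping pieces by $\tfrac14\sum\|s_k\|^2 + 5\sum\|y_k-y_{k+1}\|^2$ and a boundary term $\tfrac{n}{4}\|y_{n-1}-y_n\|^2$, invokes $\sum\|y_{k+1}-y_k\|^2<\infty$ from Lemma \ref{lem3}, and extracts a subsequence $n_j$ with $n_j\|y_{n_j}-y_{n_j+1}\|^2\to 0$ to conclude summability of $D_{\mu_k}f(y_{k+1},y_k)+\rho_1\|r_k\|^2+\rho_2\|s_k\|^2$; boundedness of $\{y_k\}$ then follows from $D_{\mu_n}f(\hat{y},y_n)\le C$ and strong convexity, and boundedness of $\{x_k\}$ from \textbf{(A4)}. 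If you want to complete your write-up, the fix is to replace your $\Phi_k$ by this Bregman quantity rather than to repair the saddle-point functional.
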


\begin{proof}
Let $(\hat{x},\hat{y})$ be any feasible point of (\ref{eq:2}). By using (\ref{eq:10}) and Lemma \ref{lem1} we have
\begin{align}\label{eq:28}
&D_{\mu_{k+1}}  f(\hat y, y_{k+1}) - D_{\mu_k} f(\hat y, y_k) +  D_{\mu_k} f(y_{k+1}, y_k) \nonumber\\
& = \l \mu_k-\mu_{k+1}, \hat y - y_{k+1}\r = - \rho_2 \l s_{k+1}, W(\hat x - x_{k+1})+ s_{k+1}\r  \nonumber\\
& = - \rho_2 \|s_{k+1}\|^2 +\rho_2\l (y_{k-1}-y_k) -(y_k-y_{k+1}), W(\hat x - x_{k+1})\r  \nonumber \\
& \quad \, +\rho_1\l r_{k+1}, A(\hat x-x_{k+1})\r \nonumber\\
& = -\rho_1 \|r_{k+1}\|^2 -\rho_2\|s_{k+1}\|^2 + \rho_2 \l y_{k-1}-y_k, W(\hat x-x_{k+1})\r \nonumber \\
& \quad \, -\rho_2\l y_k-y_{k+1}, W(\hat x - x_{k+1})\r.
\end{align}
For any positive integers $m<n$, by summing the above inequality over $k$ from $k=m$ to $k=n-1$ we can obtain
\begin{align} \label{eq:16}
& D_{\mu_n}f(\hat y, y_n) -D_{\mu_m} f(\hat y, y_m) +\sum_{k=m}^{n-1} D_{\mu_k} f(y_{k+1}, y_k) \displaybreak[0] \nonumber\\
& = - \sum_{k=m+1}^{n} (\rho_1\|r_k\|^2 +\rho_2\|s_k\|^2) + \rho_2 \l y_{m-1}-y_{m}, W(\hat x-x_{m+1})\r \displaybreak[0] \nonumber\\
& \quad \, + \rho_2 \sum_{k=m}^{n-2} \l y_k-y_{k+1}, W(x_{k+1} -x_{k+2})\r -\rho_2 \l y_{n-1}-y_{n}, W(\hat x-x_{n})\r.\displaybreak[0]
\end{align}
By taking $m=1$ in the above equation, it follows that
\begin{align*}
& D_{\mu_n}f(\hat y, y_n) +\sum_{k=1}^{n-1} D_{\mu_k} f(y_{k+1}, y_k)\\
& = D_{\mu_1} f(\hat y, y_1) -  \sum_{k=2}^{n} (\rho_1\|r_k\|^2 +\rho_2\|s_k\|^2) + \rho_2 \l y_{0}-y_{1}, W(\hat x-x_{2})\r \displaybreak[0]\\
& \quad \, + \rho_2 \sum_{k=1}^{n-2} \l y_k-y_{k+1}, W(x_{k+1}-x_{k+2})\r -\rho_2 \l y_{n-1}-y_{n}, W(\hat x-x_{n})\r.\displaybreak[0]
\end{align*}
We need to estimate the last two terms. By the Cauchy-Schwarz inequality, we have
\begin{align}\label{eq:29}
\sum_{k=1}^{n-2} & \l y_k-y_{k+1}, W(x_{k+1}-x_{k+2})\r
  = \sum_{k=1}^{n-2} \l y_k -y_{k+1}, s_{k+1} +(y_{k+1}-y_{k+2})-s_{k+2}\r \displaybreak[0] \nonumber\\
& \le \sum_{k=1}^{n-2} \left(\frac{1}{8} \|s_{k+1}\|^2 +\frac{1}{8} \|s_{k+2}\|^2 + \frac{9}{2}\|y_k-y_{k+1}\|^2
 + \frac{1}{2} \|y_{k+1}-y_{k+2}\|^2\right)   \nonumber\\
&\le \frac{1}{4} \sum_{k=2}^{n} \|s_k\|^2 + 5 \sum_{k=1}^{n-1} \|y_k-y_{k+1}\|^2.
\end{align}
Similarly we have
\begin{align}\label{eq:299}
& -\l y_{n-1} -y_{n},  W(\hat x-x_{n})\r \nonumber \\
& =-\l y_{n-1}-y_{n}, W(\hat x-x_1)\r -\sum_{k=1}^{n-1}\l y_{n-1}-y_{n}, s_k + (y_k-y_{k+1})-s_{k+1}\r \displaybreak[0] \nonumber\\
& \le  \frac{1}{4} \|W(\hat x-x_1)\|^2  - \l y_{n-1}-y_{n}, s_1-s_n\r
-\sum_{k=1}^{n-2} \l y_{n-1}-y_{n}, y_k-y_{k+1}\r \displaybreak[0] \nonumber\\
& \le \frac{1}{4} \|W(\hat x-x_1)\|^2 + \frac{1}{4} \left(\|s_1\|^2 +\|s_n\|^2\right) + \frac{n}{4} \|y_{n-1}-y_{n}\|^2
+ 2\sum_{k=1}^{n-2} \|y_k-y_{k+1}\|^2.
\end{align}
Therefore, we can conclude that there is a constant $C$ independent of $n$ such that
\begin{align}\label{eq:30}
& D_{\mu_n}f(\hat y, y_n) + \sum_{k=1}^{n-1} D_{\mu_k} f(y_{k+1}, y_k) \nonumber\\
& \le C -\rho_1 \sum_{k=2}^{n} \|r_k\|^2 - \frac{\rho_2}{2} \sum_{k=2}^{n} \|s_k\|^2 + \frac{1}{4}\rho_2 n \|y_{n-1}-y_{n}\|^2
 + 7 \rho_2 \sum_{k=1}^{n-1} \| y_k-y_{k+1}\|^2.
\end{align}
From Lemma \ref{lem3} it follows that
$
\sum_{n=1}^\infty \|y_n-y_{n+1}\|^2 <\infty.
$
Thus, we can find a subsequence of integers $\{n_j\}$ with $n_j\rightarrow \infty$ such that
$
n_j \|y_{n_j}-y_{n_j+1}\|^2 \rightarrow 0
$
as $j\rightarrow \infty$. Consequently, it follows from (\ref{eq:30}) that
\begin{align*}
& \sum_{k=1}^{n_j-1} D_{\mu_k} f(y_{k+1}, y_k)
+\rho_1 \sum_{k=2}^{n_j} \|r_k\|^2 +\frac{\rho_2}{2} \sum_{k=2}^{n_j} \|s_k\|^2 \le C.
\end{align*}
Letting $j\rightarrow \infty$ gives
$$
\sum_{k=1}^\infty \left( D_{\mu_k} f(y_{k+1}, y_k) + \rho_1\|r_k\|^2 +\rho_2\|s_k\|^2\right) <\infty.
$$
We therefore obtain $\sum_{k=1}^\infty E_k<\infty$. By Lemma \ref{lem3}, $\{E_k\}$ is
monotonically decreasing. Thus $n E_n \le \sum_{k=1}^n E_k \le C$, and $n \rho_2\|y_n-y_{n+1}\|^2 \le n E_n \le C$.
Consequently, from (\ref{eq:30}) it follows that
$D_{\mu_n} f(\hat y, y_n)\le C$. By the strong convexity of $f$, we can conclude that $\{y_n\}$
is bounded. Furthermore, using $\sum_{n=1}^\infty E_n<\infty$,
we can conclude that $Ax_n\rightarrow b$, $Wx_n-y_n\rightarrow 0$ and
$y_n-y_{n+1}\rightarrow 0$ as $n\rightarrow \infty$. In view of the boundedness of $\{A x_n\}$ and $\{W x_n\}$,
we can use (\textbf{A4}) to conclude that $\{x_n\}$ is bounded.
\end{proof}

\begin{lemma}\label{lem5}
Let $(\hat x, \hat y)$ be any feasible point of (\ref{eq:2}). Then $\{D_{\mu_k} f(\hat y, y_k)\}$
is a convergent sequence.
\end{lemma}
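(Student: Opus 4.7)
The plan is to show that the sequence $a_n := D_{\mu_n} f(\hat y, y_n)$ is Cauchy in $\mathbb{R}$, and hence convergent. The natural starting point is equation (\ref{eq:16}) already established in the proof of Lemma \ref{lem4}, which gives, for any $n > m \ge 1$,
\begin{align*}
a_n - a_m & = -\sum_{k=m}^{n-1} D_{\mu_k} f(y_{k+1}, y_k) - \sum_{k=m+1}^{n}\bigl(\rho_1\|r_k\|^2 + \rho_2\|s_k\|^2\bigr) \\
& \quad + \rho_2\l y_{m-1}-y_m, W(\hat x - x_{m+1})\r - \rho_2\l y_{n-1}-y_n, W(\hat x - x_n)\r \\
& \quad + \rho_2\sum_{k=m}^{n-2}\l y_k - y_{k+1}, W(x_{k+1}-x_{k+2})\r.
\end{align*}
I would therefore show that each of the five terms on the right becomes arbitrarily small in absolute value as $m \to \infty$ (uniformly in $n > m$).

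First I collect the ingredients from Lemma \ref{lem4}: $\sum_k D_{\mu_k} f(y_{k+1}, y_k) < \infty$, $\sum_k \|r_k\|^2 < \infty$, $\sum_k \|s_k\|^2 < \infty$, $\sum_k \|y_{k+1}-y_k\|^2 < \infty$, and $\{y_k\}$ is bounded. Combined with $s_k = Wx_k - y_k \to 0$, the boundedness of $\{y_k\}$ yields boundedness of $\{Wx_k\}$, so $\|W(\hat x - x_k)\| \le C$ uniformly. Moreover, identity (\ref{eq:15}) gives $W(x_{k+1}-x_k) = (s_{k+1}-s_k) - (y_k - y_{k+1})$, from which
$$
\|W(x_{k+1}-x_k)\|^2 \le 3\bigl(\|s_{k+1}\|^2 + \|s_k\|^2 + \|y_k-y_{k+1}\|^2\bigr),
$$
so $\sum_k \|W(x_{k+1}-x_k)\|^2 < \infty$ as well.

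With these summability facts in hand, the first two sums are tails of convergent series, and both tend to $0$ as $m\to\infty$. The two boundary terms are bounded by $C \rho_2 \|y_{m-1}-y_m\|$ and $C\rho_2\|y_{n-1}-y_n\|$ respectively, and vanish in the limit because $y_{k+1}-y_k \to 0$ by Lemma \ref{lem4}. For the remaining cross sum, Cauchy--Schwarz gives
$$
\bigl|\l y_k-y_{k+1}, W(x_{k+1}-x_{k+2})\r\bigr| \le \tfrac{1}{2}\|y_k-y_{k+1}\|^2 + \tfrac{1}{2}\|W(x_{k+1}-x_{k+2})\|^2,
$$
so this series is absolutely convergent, and its tail from $k=m$ to $k=n-2$ also tends to $0$. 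Adding these estimates shows $|a_n - a_m| \to 0$ as $m \to \infty$, uniformly in $n > m$, whence $\{a_n\}$ is Cauchy in $\mathbb{R}$ and therefore convergent.

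The main technical obstacle is recognizing that, although we have only $\sum \|y_{k+1}-y_k\|^2 < \infty$ (not $\sum \|y_{k+1}-y_k\| < \infty$), the cross sum $\sum_k \l y_k - y_{k+1}, W(x_{k+1}-x_{k+2})\r$ can nevertheless be controlled by a sum of squares via Cauchy--Schwarz, provided one has $\sum_k \|W(x_{k+1}-x_k)\|^2 < \infty$; this last fact is not listed in Lemma \ref{lem4} and must be extracted from the residual identity (\ref{eq:15}) together with the summability of $\{\|s_k\|^2\}$ and $\{\|y_{k+1}-y_k\|^2\}$.
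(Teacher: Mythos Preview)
Your proof is correct and follows essentially the same approach as the paper: both start from identity (\ref{eq:16}) and show the sequence is Cauchy by bounding each term on the right-hand side using the summability results of Lemma \ref{lem4}. The only cosmetic difference is in handling the cross sum $\sum_k \l y_k-y_{k+1}, W(x_{k+1}-x_{k+2})\r$: the paper expands $W(x_{k+1}-x_{k+2}) = s_{k+1}+(y_{k+1}-y_{k+2})-s_{k+2}$ inside the inner product and then applies Cauchy--Schwarz, whereas you first extract the auxiliary fact $\sum_k \|W(x_{k+1}-x_k)\|^2<\infty$ from (\ref{eq:15}) and then apply Cauchy--Schwarz directly---the two routes amount to the same estimate.
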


\begin{proof}
Let $m<n$ be any two positive integers. By using (\ref{eq:16}) we have
\begin{align*}
\left| D_{\mu_n} f(\hat y, y_n) - D_{\mu_m} f(\hat y, y_m)\right|
& \le  \sum_{k=m}^{n-1} D_{\mu_k} f(y_{k+1}, y_k) + \sum_{k=m+1}^{n} \left(\rho_1\|r_k\|^2 +\rho_2\|s_k\|^2\right) \displaybreak[0]\\
& \quad \, +\rho_2 |\l y_{m-1}-y_m, W(\hat x-x_{m+1})\r| \\
& \quad \, +\rho_2 \left|\sum_{k=m}^{n-2} \l y_k-y_{k+1}, W(x_{k+1}-x_{k+2})\r \right| \\
& \quad \, + \rho_2 |\l y_{n-1}-y_{n}, W(\hat x-x_{n})\r |.
\end{align*}
By the same argument for deriving (\ref{eq:29}), we have
$$
\left|\sum_{k=m}^{n-2} \l y_k-y_{k+1}, W(x_{k+1}-x_{k+2})\r \right|
\le \frac{1}{4} \sum_{k=m+1}^n \|s_k\|^2 + 5 \sum_{k=m}^{n-1} \|y_k-y_{k+1}\|^2.
$$
Therefore
\begin{align*}
& \left| D_{\mu_n} f(\hat y, y_n) - D_{\mu_m} f(\hat y, y_m)\right| \\
& \le \sum_{k=m}^\infty\left( D_{\mu_k} f(y_{k+1}, y_k) + \rho_1\|r_k\|^2 +\frac{5}{4}\rho_2\|s_k\|^2 + 5 \rho_2\|y_k-y_{k+1}\|^2\right)\\
& \quad \, +\rho_2 \| y_{m-1}-y_m\| \|W(\hat x-x_{m+1})\| + \rho_2 \|y_{n-1}-y_{n}\| \|W(\hat x-x_{n})\|.
\end{align*}
In view of Lemma \ref{lem4}, we can conclude that
\begin{align*}
\left| D_{\mu_n} f(\hat y, y_n) - D_{\mu_m} f(\hat y, y_m)\right| \rightarrow 0 \quad \mbox{ as }
m, n \rightarrow \infty.
\end{align*}
This shows that $\{D_{\mu_k} f(\hat y, y_k)\}$ is a Cauchy sequence and hence is convergent.
\end{proof}

Now we are ready to give the main convergence result concerning the ADMM algorithm (\ref{eq:3})--(\ref{eq:6}) with
exact data.

\begin{theorem}\label{thm:exact}
Let \emph{(\textbf{A1})--(\textbf{A4})} hold and let $b$ be consistent. Let $x^*$ be the unique
solution of (\ref{eq:1}) and let $y^* = Wx^*$. Then for the ADMM (\ref{eq:3})--(\ref{eq:6}) there hold
$$
x_k\rightarrow x^*, \quad y_k \rightarrow y^*, \quad W x_k \rightarrow y^*,\quad f(y_k) \rightarrow f(y^*)\quad
\mbox{and} \quad D_{\mu_k} f(y^*, y_k)\rightarrow 0
$$
as $k\rightarrow \infty$.
\end{theorem}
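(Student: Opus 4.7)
The plan is to prove the theorem in three stages: extract a weakly convergent subsequence, identify its limit with $(x^*,y^*)$, and upgrade to strong convergence including $f(y_k)\to f(y^*)$.

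\textbf{Stage 1: weak subsequential limit and feasibility.} Lemma \ref{lem4} gives that $\{x_k\}$ and $\{y_k\}$ are bounded, so along some subsequence $x_{k_j}\rightharpoonup\tilde x$ in $\X$ and $y_{k_j}\rightharpoonup\tilde y$ in $\Y$. The strong convergence $Wx_k-y_k\to 0$ from Lemma \ref{lem4} forces $Wx_{k_j}\rightharpoonup\tilde y$, whence the weak closedness of $W$ (implied by (\textbf{A3})) gives $\tilde x\in\D(W)$ and $W\tilde x=\tilde y$. Combining $Ax_k\to b$ strongly with weak continuity of the bounded operator $A$ yields $A\tilde x=b$, so $(\tilde x,\tilde y)$ is a feasible point of (\ref{eq:2}).

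\textbf{Stage 2: $(\tilde x,\tilde y)=(x^*,y^*)$.} Applying Lemma \ref{lem5} at the feasible points $(x^*,y^*)$ and $(\tilde x,\tilde y)$ gives $D_{\mu_k}f(y^*,y_k)\to d^*\ge 0$ and $D_{\mu_k}f(\tilde y,y_k)\to\tilde d\ge 0$. Optimality of $(x^*,y^*)$ against the feasible $(\tilde x,\tilde y)$ gives $f(\tilde y)\ge f(y^*)$, while weak lower semicontinuity of $f$ gives $f(\tilde y)\le\liminf_j f(y_{k_j})$. The identity
\[
D_{\mu_k}f(y^*,y_k)-D_{\mu_k}f(\tilde y,y_k)=f(y^*)-f(\tilde y)-\l\mu_k,y^*-\tilde y\r,
\]
combined with the subgradient inequality $\l\mu_k,y^*-y_k\r\le f(y^*)-f(y_k)$ and the strong convexity bound $D_{\mu_k}f(\tilde y,y_k)\ge c_0\|\tilde y-y_k\|^2$ from (\ref{eq:BD}), will be used to extract the reverse inequality $f(\tilde y)\le f(y^*)$, forcing $f(\tilde y)=f(y^*)$. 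Uniqueness (Theorem \ref{thm:exist}) then identifies $\tilde x=x^*$ and $\tilde y=y^*$; since every weak sequential limit coincides with $(x^*,y^*)$, the entire sequence converges weakly, and the identification pins down $d^*=0$.

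\textbf{Stage 3 and main obstacle.} From $d^*=0$ and (\ref{eq:BD}), $c_0\|y_k-y^*\|^2\le D_{\mu_k}f(y^*,y_k)\to 0$, so $y_k\to y^*$ strongly; combined with $Wx_k-y_k\to 0$ this also gives $Wx_k\to y^*$. Assumption (\textbf{A4}) applied to $x_k-x^*\in\D(W)$ yields
\[
c_1\|x_k-x^*\|^2\le \|A(x_k-x^*)\|^2+\|W(x_k-x^*)\|^2=\|r_k\|^2+\|Wx_k-y^*\|^2\to 0,
\]
so $x_k\to x^*$ strongly. The vanishing Bregman distance reads $f(y_k)+\l\mu_k,y^*-y_k\r\to f(y^*)$; the subgradient upper bound $\l\mu_k,y^*-y_k\r\le f(y^*)-f(y_k)$ together with the lsc lower bound $\liminf f(y_k)\ge f(y^*)$ then squeeze $f(y_k)\to f(y^*)$. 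The main obstacle is Stage 2: the absence of a Lagrange multiplier for (\ref{eq:2}) rules out standard dual-variable arguments, and the sharp upper bound $f(\tilde y)\le f(y^*)$ must be extracted entirely from the two converging Bregman distances and the strong convexity, which is the delicate ``existence-free'' heart of the analysis.
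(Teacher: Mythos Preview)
Your Stage~2 contains a genuine gap, and it is exactly the ``existence-free heart'' you flag as the obstacle. From the ingredients you list---the identity $D_{\mu_k}f(y^*,y_k)-D_{\mu_k}f(\tilde y,y_k)=f(y^*)-f(\tilde y)-\l\mu_k,y^*-\tilde y\r$, the subgradient inequality, and strong convexity---you cannot extract $f(\tilde y)\le f(y^*)$. Taking limits in the identity only tells you that $\l\mu_k,y^*-\tilde y\r$ converges to some $L$ with $d^*-\tilde d=f(y^*)-f(\tilde y)-L$; nothing you have written controls the sign of $L$ or of $d^*-\tilde d$. Worse, even after you identify $\tilde y=y^*$ (so $\tilde d=d^*$), you still have not shown $d^*=0$: weak convergence $y_{k_j}\rightharpoonup y^*$ alone does \emph{not} force $D_{\mu_{k_j}}f(y^*,y_{k_j})\to 0$, because $\{\mu_k\}$ need not be bounded (indeed, in the absence of a Lagrange multiplier it typically is not). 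A simple model is $f(y)=\|y\|^2$, $y_k=e_k$ an orthonormal sequence: then $\mu_k=2e_k$, $y_k\rightharpoonup 0$, yet $D_{\mu_k}f(0,y_k)=1$ for all $k$. So the claim ``the identification pins down $d^*=0$'' is unsupported.

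The paper's proof avoids this by never passing through weak convergence. It uses the algorithm-specific relation of Lemma~\ref{lem1} to derive the quantitative bound~(\ref{eq:18}) on $\l\mu_n-\mu_m,y_n-\hat y\r$ in terms of $\sum E_k$ and residual quantities. This is the decisive substitute for control of $\{\mu_k\}$: it shows directly that $\{y_k\}$ is Cauchy (via (\ref{eq:17})), hence strongly convergent to some $\tilde y$; then, taking $\hat y=\tilde y$ in (\ref{eq:18}), it yields $\l\mu_k,y_k-\tilde y\r\to 0$, which together with (\ref{eq:27}) and lower semicontinuity gives $f(y_k)\to f(\tilde y)$ and $D_{\mu_k}f(\tilde y,y_k)\to 0$. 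The optimality $f(\tilde y)\le f(\hat y)$ is obtained separately, again from Lemma~\ref{lem1}, via the explicit expansion (\ref{eq:26}) of $\l\mu_{k_0},y_k-\hat y\r$. Your Stage~3 squeeze for $f(y_k)\to f(y^*)$ has the same defect: from $D_{\mu_k}f(y^*,y_k)\to 0$ and lower semicontinuity you only get $\limsup\l\mu_k,y^*-y_k\r\le 0$, not $\liminf\ge 0$, so the two pieces $f(y_k)$ and $\l\mu_k,y^*-y_k\r$ cannot be separated without the additional input $\l\mu_k,y^*-y_k\r\to 0$ that the paper obtains from~(\ref{eq:18}).
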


\begin{proof}
We first show that $\{y_k\}$ is a Cauchy sequence. Let $(\hat{x}, \hat{y})$ be any feasible point of (\ref{eq:2}).
We will use the identity
\begin{align}\label{eq:17}
D_{\mu_m} f(y_n, y_m) = D_{\mu_m} f(\hat y, y_m) - D_{\mu_n} f(\hat y, y_n) + \l \mu_n-\mu_m, y_n-\hat y\r.
\end{align}
In view of (\ref{eq:10}) and lemma (\ref{lem1}), we can write
\begin{align*}
\l \mu_n-\mu_m, y_n-\hat y\r
& =\sum_{k=m}^{n-1} \l \mu_{k+1}-\mu_k, y_n-\hat y\r
= \rho_2 \sum_{k=m}^{n-1} \l s_{k+1}, y_n-\hat y\r \\
& = -\rho_2 \sum_{k=m}^{n-1} \l s_{k+1}, s_n \r +\rho_2 \sum_{k=m}^{n-1} \l s_{k+1}, W(x_n-\hat x)\r\\
& = - \sum_{k=m}^{n-1} \left(\rho_2\l s_{k+1}, s_n\r +\rho_1 \l r_{k+1}, r_n\r\right)
+ \rho_2 \l y_{n-1}-y_n, W(x_n-\hat x)\r \nonumber \\
& \quad \,  -\rho_2 \l y_{m-1}-y_m,W( x_n-\hat x)\r.
\end{align*}
By the Cauchy-Schwarz inequality and the monotonicity of $\{E_k\}$, we have
\begin{align}\label{eq:18}
|\l \mu_n-\mu_m, y_n-\hat y\r|
& \le \frac{1}{2}\sum_{k=m+1}^{n} \left(\rho_1\|r_k\|^2+ \rho_2\|s_k\|^2\right) + \frac{n-m}{2}  \left(\rho_1\|r_n\|^2 +\rho_2\|s_n\|^2\right) \nonumber\\
& \quad + \rho_2| \l y_{n-1}-y_n, W(x_n-\hat x)\r| + \rho_2 |\l y_{m-1}-y_m, W(x_n-\hat x)\r| \nonumber\\
& \le \sum_{k=m+1}^{n} E_k + \rho_2 |\l y_{n-1}-y_n,W( x_n-\hat x)\r| \nonumber \\
&\quad \,  + \rho_2 |\l y_{m-1}-y_m, W(x_n-\hat x)\r|.
\end{align}
This together with Lemma \ref{lem4} implies that
\begin{align}\label{eq:33}
\l \mu_n-\mu_m, y_n-\hat y\r \rightarrow 0\quad \mbox{ as } m,n\rightarrow \infty.
\end{align}
Combining this with (\ref{eq:17}) and using Lemma \ref{lem5},
we obtain that $D_{\mu_m} f(y_n, y_m)\rightarrow 0$ as $m, n\rightarrow \infty$. By the strong convexity of $f$ we have
$\|y_n-y_m\|\rightarrow 0$ as $m,n\rightarrow \infty$. Thus $\{y_k\}$ is a Cauchy sequence in $\Y$.
Consequently, there is $\tilde{y}\in \Y $ such that $y_k\rightarrow \tilde{y}$ as $k\rightarrow \infty$.

We will show that there is $\tilde{x}\in \D(W)$ such that $x_k\rightarrow \tilde{x}$, $A\tilde{x} = b$
and $W\tilde{x} = \tilde{y}$. By virtue of Lemma \ref{lem4} and $y_k \rightarrow \tilde{y}$, we have
$Wx_k \rightarrow \tilde{y}$ and $A x_k \rightarrow b$. From (\textbf{A4}) it follows that
$$
c_1 \|x_n-x_k\|^2 \le \|A x_n-A x_k\|^2 + \|W x_n -W x_k\|^2
$$
for any integers $n,k$. Therefore $\|x_n-x_k\| \rightarrow 0$ as $n, k\rightarrow \infty$ which shows that
$\{x_k\}$ is a Cauchy sequence in $\X$. Thus, there is $\tilde x\in \X$ such that $x_k \rightarrow \tilde x$
as $k\rightarrow \infty$. Clearly $b = \lim_{k\rightarrow \infty} A x_k = A \tilde x$. By using the closedness
of $W$ and $\{x_k\}\subset \D(W)$, we can further conclude that $\tilde x \in \D(W)$ and $W \tilde x = \tilde y$.

Next we will show that
$$
\tilde{y} \in \D(f), \quad \lim_{k\rightarrow \infty} f(y_k)=f(\tilde{y}) \quad \mbox{ and } \quad
\lim_{k\rightarrow \infty} D_{\mu_k} f(\tilde{y}, y_k) = 0.
$$
Recall that $\mu_k\in \p f(y_k)$, we have
\begin{align}\label{eq:27}
f(y_k) \le f(\hat y) + \l \mu_k, y_k-\hat y\r.
\end{align}
By using (\ref{eq:18}), it yields
\begin{align*}
\l \mu_k, y_k-\hat y\r
& \le \l \mu_1, y_k-\hat y\r + \sum_{i=2}^{k} E_i
+ \rho_2 \|y_{k-1}-y_k\| \|W(x_k-\hat x)\| \\
& \quad \,  + \rho_2 \|y_0-y_1\| \|W(x_k-\hat x)\|,
\end{align*}
%Therefore
%\begin{align}\label{eq:19}
%f(y_k) & \le f(\hat y) +\l \mu_1, y_k-\hat y\r + \sum_{i=2}^{k} E_i
%+ \rho_2 \|y_{k-1}-y_k\| \|W(x_k-\hat x)\|  \nonumber\\
%& \quad \, + \rho_2 \|y_0-y_1\| \|W(x_k-\hat x)\|,
%\end{align}
which together with (\ref{eq:27}) and Lemma \ref{lem4} implies that $f(y_k) \le C<\infty$ for some
constant $C$ independent of $k$. By the lower semi-continuity of $f$ we have
\begin{align}\label{eq:6.25}
f(\tilde{y}) \le \liminf_{k\rightarrow \infty} f(y_k) \le C<\infty.
\end{align}
Thus $\tilde{y}\in \D(f)$. Since the above argument shows that$(\tilde x, \tilde y)$ is a feasible point
of (\ref{eq:2}), we may replace $(\hat x, \hat y)$ in (\ref{eq:18}) by $(\tilde x,\tilde{y})$ and use
$y_k \rightarrow \tilde y$ and $W x_k \rightarrow W \tilde x$ to obtain
$$
\limsup_{k\rightarrow \infty} |\l \mu_k, y_k-\tilde{y}\r| \le \sum_{i=m+1}^\infty E_i
$$
for all integers $m$. This together with Lemma \ref{lem4} implies that $\l \mu_k, y_k-\tilde{y}\r\rightarrow 0$
as $k\rightarrow \infty$. Now we can use (\ref{eq:27}) with $\hat y$ replaced by $\tilde{y}$ to obtain
$
\limsup_{k\rightarrow \infty} f(y_k) \le f(\tilde{y}).
$
This together with (\ref{eq:6.25}) gives $\lim_{k\rightarrow \infty} f(y_k) = f(\tilde{y})$.
It is now straightforward to show that
$\lim_{k\rightarrow \infty} D_{\mu_k} f(\tilde{y}, y_k) =0$.

Finally we show that $\tilde x = x^*$ and $\tilde{y} = y^*$. To see this, we first prove that
$f(\tilde{y}) \le f(\hat y)$ for any feasible point $(\hat x, \hat y)$ of (\ref{eq:2}). We will use (\ref{eq:27}).
Let $\varepsilon>0$ be any small number. By using (\ref{eq:33}) and Lemma \ref{lem4}, we can find $k_0$ such that
\begin{align}\label{eq:21}
|\l \mu_k-\mu_{k_0}, y_k-\hat y\r| \le \varepsilon \quad \mbox{and} \quad
\rho_2 |\l y_{k_0-1}-y_{k_0}, W(x_k-\hat x)\r| \le \varepsilon
\end{align}
for all $k\ge k_0$. Thus, it follows from (\ref{eq:27}) that
\begin{align}\label{eq:34}
f(y_k) \le f(\hat y) + \varepsilon + \l \mu_{k_0}, y_k-\hat y\r.
\end{align}
In view of (\ref{eq:10}) and Lemma \ref{lem1} we have
\begin{align*}
\l \mu_{k_0}, y_k-\hat y\r &= -\l \mu_{k_0}, s_k\r +\l \mu_{k_0}, W(x_k-\hat x)\r\\
%& = -\l \mu_{k_0}, s_k\r + \l \mu_1, W(x_k-\hat x)\r + \sum_{i=2}^{k_0} \l \mu_i-\mu_{i-1}, W(x_k-\hat x)\r \\
& = -\l \mu_{k_0}, s_k\r + \l \mu_1, W(x_k-\hat x)\r + \rho_2 \sum_{i=2}^{k_0} \l s_i, W(x_k-\hat x)\r \\
&= -\l \mu_{k_0}, s_k\r - \l \la_1, r_k\r +\rho_2 \l y_0-y_1, W(x_k-\hat x)\r\\
& \quad \ - \sum_{i=2}^{k_0} \left( \rho_1\l r_i, r_k\r - \rho_2 \l (y_{i-1}-y_i)-(y_{i-2}-y_{i-1}),W(x_k-\hat x)\r\right) \\
& =-\l \mu_{k_0}, s_k\r  -\l \la_1, r_k\r -\rho_1 \sum_{i=2}^{k_0} \l r_i, r_k\r + \rho_2 \l y_{k_0-1}- y_{k_0}, W(x_k-\hat x)\r.
\end{align*}
Thus, by using the second equation in (\ref{eq:21}) we can derive that
\begin{align}\label{eq:26}
|\l \mu_{k_0}, y_k-\hat y\r| \le \|\mu_{k_0}\|\|s_k\| +\|\la_1\| \|r_k\|
+ \rho_1 \left(\sum_{i=2}^{k_0}\|r_i\|\right) \|r_k\| + \varepsilon.
\end{align}
Combining (\ref{eq:34}) and (\ref{eq:26}), we obtain
\begin{align*}
f(y_k) \le f(\hat y) + 2\varepsilon + \|\mu_{k_0}\|\|s_k\| + \|\la_1\| \|r_k\|
+ \rho_1 \left(\sum_{i=2}^{k_0}\|r_i\|\right) \|r_k\|.
\end{align*}
In view of Lemma \ref{lem4}, this implies that
$
\limsup_{k\rightarrow \infty} f(y_k) \le f(\hat y) +2 \varepsilon.
$
By the lower semi-continuity of $f$ and the fact $y_k\rightarrow \tilde{y}$ we can derive that
$$
f(\tilde{y}) \le \liminf_{k\rightarrow \infty} f(y_k) \le f(\hat y) + 2\varepsilon.
$$
Because $\varepsilon>0$ can be arbitrarily small, we must have $f(\tilde{y}) \le f(\hat y)$ for any feasible point
$(\hat x, \hat y)$ of (\ref{eq:2}). Since $(\tilde x, \tilde y)$ is a feasible point of (\ref{eq:2}), it follows that
$$
f(W \tilde x) = f(\tilde{y}) = f(y^*) = f(W x^*)= \min\left\{f(W x): x\in \D(W) \mbox{ and } A x =b\right\}.
$$
From the uniqueness of $x^*$, see Theorem \ref{thm:exist},
we can conclude that $\tilde{x} = x^*$ and hence $\tilde{y} = y^*$.  The proof is therefore complete.
\end{proof}

\subsection{Noisy data case}

In practical applications, the data are usually obtained by measurement and unavoidably
contain error. Thus, instead of $b$, usually we only have noisy data $b^\d$ satisfying
$$
\|b^\d-b\|\le \d
$$
for a small noise level $\d>0$. In this situation, we need to replace $b$ in our ADMM algorithm
(\ref{eq:3})--(\ref{eq:6}) by the noisy data $b^\d$ for numerical computation. For inverse problems,
an iterative method using noisy data usually exhibits semi-convergence property, i.e.
the iterate converges toward the sought solution at the beginning, and, after a critical number of iterations,
the iterate eventually diverges from the sought solution due to the amplification of noise.
The iteration should be terminated properly in order to produce a reasonable approximate solution for (\ref{eq:1}).
Incorporating a stopping criterion into the iteration leads us to propose Algorithm \ref{alg:noise}
for solving inverse problems with noisy data.

\begin{algorithm}[H]
   \caption{(ADMM with noisy data)}\label{alg:noise}
   \begin{algorithmic}[1]
     \STATE Input: initial guess $y_0\in \Y$, $\la_0\in \H$ and $\mu_0\in \Y$, constants $\rho_1>0$, $\rho_2>0$
     and $\tau>1$, noise level $\delta>0$.
     \STATE Let $y_0^\d = y_0$, $\la_0^\d =\la_0$ and $\mu_0^\d = \mu_0$.
     \FOR {$k=0,1, \cdots$}
     \STATE update $x$, $y$ and the Lagrange multipliers $\la$, $\mu$ as follows:
      \begin{align*}
x_{k+1}^\d & = \arg\min_{x\in \D(W)} \left\{\l \la_k^\d, A x\r + \l \mu_k^\d, Wx\r
   + \frac{\rho_1}{2} \|A x-b^\d\|^2 +\frac{\rho_2}{2} \|Wx-y_k^\d\|^2\right\} ,\\
y_{k+1}^\d  &= \arg\min_{y\in \Y} \left\{ f(y) - \l \mu_k^\d, y\r + \frac{\rho_2}{2} \|Wx_{k+1}^\d -y\|^2 \right\},\\
\la_{k+1}^\d &= \la_k^\d + \rho_1 (A x_{k+1}^\d -b^\d),\\
\mu_{k+1}^\d &= \mu_k^\d + \rho_2 (Wx_{k+1}^\d -y_{k+1}^\d).
\end{align*}
     \STATE check the stopping criterion:
     \begin{equation}\label{eq:stop}
     \rho_1^2\|A x_k^\d -b^\d\|^2 +\rho_2^2 \|Wx_k^\d -y_k^\d\|^2 \le \max(\rho_1^2,\rho_2^2)\tau^2 \d^2.
     \end{equation}
     \ENDFOR
   \end{algorithmic}
\end{algorithm}

Under (\textbf{A1})--(\textbf{A4}), we may use Lemma \ref{lem:w2} to conclude that $x_k^\d$, $y_k^\d$,
$\la_k^\d$ and $\mu_k^\d$ in Algorithm \ref{alg:noise} are well-defined for $k\ge 1$. Furthermore, we
have the following stability result in which we take $x_0^\d =x_0$ to be any element in $\D(W)$.

\begin{lemma}\label{lem:stability}
Consider Algorithm \ref{alg:noise} without \eqref{eq:stop}. Then for each fixed $k\ge 0$ there hold
$$
x_k^\d \rightarrow x_k, \quad y_k^\d\rightarrow y_k, \quad W x_k^\d \rightarrow W x_k, \quad
\la_k^\d \rightarrow \la_k, \quad \mu_k^\d\rightarrow \mu_k,\quad
f(y_k^\d) \rightarrow f(y_k)
$$
as $\d\rightarrow 0$, where $(x_k,y_k,\lambda_k,\mu_k)$ are defined by the ADMM algorithm (\ref{eq:3})--(\ref{eq:6})
with exact data.
\end{lemma}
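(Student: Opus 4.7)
The plan is to argue by induction on $k$. The base case $k=0$ is immediate from the initialization $y_0^\d=y_0$, $\la_0^\d=\la_0$, $\mu_0^\d=\mu_0$, $x_0^\d=x_0$, and from the continuity of $W$ on $\D(W)$ applied trivially (or just use that $W x_0^\d = W x_0$). Note that $f(y_0^\d) = f(y_0)$ since the initializations agree.

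For the inductive step, suppose the conclusion holds at step $k$. Rewrite the $x$-subproblem in Algorithm \ref{alg:noise} as
\begin{equation*}
x_{k+1}^\d = \arg\min_{x\in\D(W)} \left\{ \frac{\rho_1}{2}\bigl\|A x - (b^\d - \la_k^\d/\rho_1)\bigr\|^2
+ \frac{\rho_2}{2}\bigl\|Wx - (y_k^\d - \mu_k^\d/\rho_2)\bigr\|^2 \right\},
\end{equation*}
which has the form \eqref{eq:tmx} with data $h^\d = b^\d - \la_k^\d/\rho_1$ and $v^\d = y_k^\d - \mu_k^\d/\rho_2$. By the inductive hypothesis and $\|b^\d - b\|\le \d$, we have $h^\d \to b - \la_k/\rho_1 = h$ and $v^\d \to y_k - \mu_k/\rho_2 = v$ as $\d\to 0$. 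Applying Lemma \ref{lem:w2}(i), the continuous dependence of the minimizer and of its image under $W$ on the data yields $x_{k+1}^\d \to x_{k+1}$ in $\X$ and $W x_{k+1}^\d \to W x_{k+1}$ in $\Y$.

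Next, the $y$-subproblem has the form \eqref{eq:tmy} with datum $v^\d = W x_{k+1}^\d + \mu_k^\d/\rho_2$, which converges to $W x_{k+1} + \mu_k/\rho_2$ by the previous step and the inductive hypothesis. Lemma \ref{lem:w2}(ii) then gives $y_{k+1}^\d \to y_{k+1}$ and $f(y_{k+1}^\d) \to f(y_{k+1})$. Finally, the updates $\la_{k+1}^\d = \la_k^\d + \rho_1(A x_{k+1}^\d - b^\d)$ and $\mu_{k+1}^\d = \mu_k^\d + \rho_2(W x_{k+1}^\d - y_{k+1}^\d)$ pass to the limit by the boundedness of $A$ and by what has already been shown, giving $\la_{k+1}^\d\to\la_{k+1}$ and $\mu_{k+1}^\d\to\mu_{k+1}$. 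This closes the induction.

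There is no genuine obstacle here since Lemma \ref{lem:w2} has already packaged the only nontrivial continuity statement, namely the continuous dependence for the possibly unbounded-$W$ subproblem \eqref{eq:tmx}. The only point that needs a little care is to phrase the subproblems in a form whose input data lives in $\H$ and $\Y$ (rather than involving the iterates directly) so that Lemma \ref{lem:w2} applies verbatim, and to note that $W x_{k+1}^\d \to W x_{k+1}$ comes out of part (i) of that lemma rather than requiring a separate continuity argument for the unbounded operator $W$.
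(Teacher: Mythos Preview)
Your proof is correct and follows essentially the same approach as the paper: induction on $k$, with the inductive step handled by rewriting the subproblems in the form of Lemma~\ref{lem:w2} and then reading off continuous dependence, followed by passing to the limit in the multiplier updates. The paper's version is terser but identical in substance; your added remark that $W x_{k+1}^\d \to W x_{k+1}$ comes from part (i) of Lemma~\ref{lem:w2} rather than from any continuity of $W$ itself is a helpful clarification.
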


\begin{proof}
We use an induction argument. The result is trivial when $k=0$. Assuming that the result is true for some $k=n$,
we show that it is also true for $k=n+1$. From Lemma \ref{lem:w2} and the induction hypothesis we can obtain that
$$
x_{n+1}^\d \rightarrow x_{n+1}, \quad y_{n+1}^\d \rightarrow y_{n+1}, \quad W x_{n+1}^\d \rightarrow W x_{n+1}
\quad \mbox{and} \quad  f(y_{n+1}^\d)\rightarrow f(y_{n+1})
$$
as $\d\rightarrow 0$. Now we can obtain $\la_{n+1}^\d\rightarrow \la_{n+1}$ and $\mu_{n+1}^\d\rightarrow \mu_{n+1}$
as $\d\rightarrow 0$ from their definition.
\end{proof}

In the following we will show that Algorithm \ref{alg:noise} terminates after a finite number of iterations
and defines a regularization method. For simplicity of exposition, we use the notation
$$
r_k^\d = A x_k^\d - b^\d \quad \mbox{and} \quad s_k^\d = Wx_k^\d - y_k^\d.
$$
From the description of Algorithm \ref{alg:noise}, one can easily see that
\begin{align*}
\la_{k+1}^\d -\la_k^\d  = \rho_1 r_{k+1}^\d, & \qquad \mu_{k+1}^\d  \in \p f(y_{k+1}^\d),\\
\mu_{k+1}^\d -\mu_k^\d  = \rho_2 s_{k+1}^\d, & \qquad
A^* \la_k^\d + \rho_1A^* r_{k+1}^\d = - W^*[\mu_k^\d+ \rho_2 (Wx_{k+1}^\d-y_k^\d)]
\end{align*}
for $k\ge 0$. By the same argument in the proof of Lemma \ref{lem1} one can derive that
\begin{align}\label{eq:616}
\rho_1\l r_{k+1}^\d, A x\r = \rho_2\left\l (y_k^\d-y_{k+1}^\d) -(y_{k-1}^\d-y_k^\d)-s_{k+1}^\d, W x\right\r, \quad x\in \D(W).
\end{align}
for $k\ge 1$. Furthermore, by using the same argument in the proof of Lemma \ref{lem3}
we can show the following result.

\begin{lemma}\label{lem:7}
Let $E_k^\d= \rho_1\|A x_k^\d-b^\d\|^2 +\rho_2 \|Wx_k^\d-y_k^\d\|^2 +\rho_2 \|y_k^\d-y_{k-1}^\d\|^2$ for $k\ge 1$.
Then
$$
E_{k+1}^\d -E_k^\d \le -\rho_1\|A (x_{k+1}^\d-x_k^\d)\|^2 -4c_0 \|y_{k+1}^\d-y_k^\d\|^2.
$$
Consequently $\{E_k^\d\}$ is monotonically decreasing along the iteration and there hold
$$
\sum_{k=m}^{n-1} \|y_{k+1}^\d-y_k^\d\|^2 \le \frac{1}{4 c_0} E_m^\d \quad \mbox{and}\quad
(n-m) \rho_2\|y_n^\d -y_{n-1}^\d\|^2 \le \sum_{k=m+1}^n E_k^\d
$$
for any integers $1\le m<n$.
\end{lemma}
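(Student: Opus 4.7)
My plan is to adapt the proof of Lemma \ref{lem3} line-by-line with $b$ replaced by $b^\d$; the whole argument goes through because the key structural identities \eqref{eq:14}, \eqref{eq:15}, the inclusion \eqref{eq:11}, and the identity in Lemma \ref{lem1} all have noisy analogues, as already listed in the displays preceding the statement.

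The first step is to record the noisy analogues of \eqref{eq:14}--\eqref{eq:15}:
\begin{align*}
r_{k+1}^\d - r_k^\d &= A(x_{k+1}^\d - x_k^\d), \\
s_{k+1}^\d - s_k^\d &= W(x_{k+1}^\d - x_k^\d) + (y_k^\d - y_{k+1}^\d).
\end{align*}
Then I would form the quantity
$$
\rho_1\langle r_{k+1}^\d - r_k^\d, r_{k+1}^\d\rangle + \rho_2\langle s_{k+1}^\d - s_k^\d, s_{k+1}^\d\rangle
$$
and expand the inner products using the above. Since $x_{k+1}^\d - x_k^\d \in \D(W)$, I can test \eqref{eq:616} against $x = x_{k+1}^\d - x_k^\d$ to cancel the $A^*$-terms against the $W^*$-terms just as in Lemma \ref{lem3}. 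Combining this with $\mu_{k+1}^\d - \mu_k^\d = \rho_2 s_{k+1}^\d$ and the strong monotonicity estimate \eqref{eq:77} applied to $\mu_{k+1}^\d \in \p f(y_{k+1}^\d)$ and $\mu_k^\d \in \p f(y_k^\d)$ (which gives the crucial $-2c_0\|y_{k+1}^\d - y_k^\d\|^2$ term), I expect to obtain
$$
\rho_1\langle r_{k+1}^\d - r_k^\d, r_{k+1}^\d\rangle + \rho_2\langle s_{k+1}^\d - s_k^\d, s_{k+1}^\d\rangle \le \rho_2\langle W(x_{k+1}^\d - x_k^\d),(y_k^\d-y_{k+1}^\d)-(y_{k-1}^\d-y_k^\d)\rangle - 2c_0\|y_{k+1}^\d-y_k^\d\|^2.
$$
Substituting in the $s$-difference identity to eliminate $W(x_{k+1}^\d-x_k^\d)$, applying Cauchy--Schwarz and Young's inequality exactly as in the proof of Lemma \ref{lem3}, and using the polarization identity $2\langle a-b,a\rangle = \|a\|^2-\|b\|^2+\|a-b\|^2$ yields the claimed one-step inequality on $E_{k+1}^\d - E_k^\d$, after noting $\|r_{k+1}^\d - r_k^\d\|^2 = \|A(x_{k+1}^\d - x_k^\d)\|^2$.

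For the two consequences, monotonicity is immediate since the right-hand side of the one-step inequality is non-positive. Telescoping that one-step inequality from $k=m$ to $k=n-1$ gives
$$
4c_0\sum_{k=m}^{n-1}\|y_{k+1}^\d - y_k^\d\|^2 \le E_m^\d - E_n^\d \le E_m^\d,
$$
which is the first estimate. For the second, monotonicity of $\{E_k^\d\}$ together with the trivial bound $\rho_2\|y_n^\d-y_{n-1}^\d\|^2 \le E_n^\d$ gives $\rho_2\|y_n^\d-y_{n-1}^\d\|^2 \le E_k^\d$ for each $k$ with $m+1\le k\le n$, and summing over these $n-m$ indices produces the second inequality. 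The only genuinely delicate point is tracking that the strong convexity inequality \eqref{eq:77} still applies to the noisy iterates --- but since the $y$-subproblem in Algorithm \ref{alg:noise} has the same form as in the exact case, the inclusion $\mu_{k+1}^\d \in \p f(y_{k+1}^\d)$ remains valid, and so no new ideas are needed.
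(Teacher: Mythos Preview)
Your proposal is correct and follows exactly the approach indicated in the paper, which simply states that the result follows by the same argument as in Lemma~\ref{lem3}; you have in fact supplied more detail than the paper does, including the clean derivation of the two telescoping consequences.
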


The following result shows that the stoping criterion (\ref{eq:stop}) is satisfied for some finite integer,
hence  Algorithm \ref{alg:noise}  terminates  after a finite number of iterations.

\begin{lemma}\label{lem:10}
There exists a finite integer $k_\d$ such that the stop condition \eqref{eq:stop} is satisfied for the first time.
Moreover, there exist positive constants $c$ and $C$ depending only on $\rho_2$, $\tau$ and $c_0$ such that
\begin{align}\label{eq:59}
D_{\mu_n^\d} f(\hat{y}, y_n^\d) + c \sum_{k=m}^n E_k^\d
& \le D_{\mu_m^\d} f(\hat{y}, y_m^\d) + \rho_2 \left\l y_{m-1}^\d-y_m^\d,W( \hat{x}-x_{m+1}^\d)\right\r \nonumber\\
& \quad \, + C \left(\|W(\hat{x} -x_m^\d)\|^2 + \|s_m^\d\|^2 + E_m^\d\right)
\end{align}
for any integers $1\le m<n<k_\d$ and any feasible point $(\hat{x}, \hat{y})$ of (\ref{eq:2}).
\end{lemma}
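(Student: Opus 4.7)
The plan is to follow the derivation of the exact-data identity (\ref{eq:16}) in the noisy setting, isolate the single new term of the form $\rho_1\langle r_{k+1}^\d, b-b^\d\rangle$ that the data perturbation introduces, and absorb it using the fact that, before stopping, $E_k^\d$ is bounded below by a positive multiple of $\delta^2$.

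First I would write the Bregman three-term identity
\[
D_{\mu_{k+1}^\d}f(\hat y, y_{k+1}^\d) - D_{\mu_k^\d}f(\hat y, y_k^\d) + D_{\mu_k^\d}f(y_{k+1}^\d, y_k^\d) = \langle \mu_k^\d - \mu_{k+1}^\d,\, \hat y - y_{k+1}^\d\rangle,
\]
and expand the right-hand side using $\mu_{k+1}^\d - \mu_k^\d = \rho_2 s_{k+1}^\d$, $\hat y - y_{k+1}^\d = W(\hat x - x_{k+1}^\d) + s_{k+1}^\d$, and the identity (\ref{eq:616}) applied with $x = \hat x - x_{k+1}^\d \in \D(W)$. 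The only deviation from (\ref{eq:28}) is that $A(\hat x - x_{k+1}^\d) = (b - b^\d) - r_{k+1}^\d$, which produces an extra term $\rho_1\langle r_{k+1}^\d, b - b^\d\rangle$. Summing from $k=m$ to $k=n-1$ and Abel-summing the combination $(y_k^\d - y_{k+1}^\d) - (y_{k-1}^\d - y_k^\d)$ yields the exact analogue of (\ref{eq:16}), now augmented by $\rho_1\sum_{k=m}^{n-1}\langle r_{k+1}^\d, b-b^\d\rangle$, with the boundary pair $\rho_2\langle y_{m-1}^\d - y_m^\d, W(\hat x - x_{m+1}^\d)\rangle$ kept explicit, a terminal term $-\rho_2\langle y_{n-1}^\d - y_n^\d, W(\hat x - x_n^\d)\rangle$, and an interior sum $\rho_2\sum_{k=m}^{n-2}\langle y_k^\d - y_{k+1}^\d, W(x_{k+1}^\d - x_{k+2}^\d)\rangle$. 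The interior and terminal boundary terms are handled as in (\ref{eq:29})--(\ref{eq:299}): expanding the $W$-differences in terms of $s_k^\d$ and $y_k^\d - y_{k+1}^\d$ and invoking Lemma \ref{lem:7} controls them by a small multiple of $\sum_{k=m+1}^n E_k^\d$ (absorbable into the LHS) plus $\|W(\hat x - x_m^\d)\|^2$, $\|s_m^\d\|^2$, and $E_m^\d$, which are exactly the ingredients allowed on the RHS of (\ref{eq:59}).

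The crucial new step is the noise absorption. For $1 \le k < k_\d$ the negation of the stopping criterion gives $\rho_1^2\|r_k^\d\|^2 + \rho_2^2\|s_k^\d\|^2 > \max(\rho_1,\rho_2)^2\tau^2\delta^2$, which combined with $\rho_1^2\|r_k^\d\|^2 + \rho_2^2\|s_k^\d\|^2 \le \max(\rho_1,\rho_2)\,E_k^\d$ yields $\max(\rho_1,\rho_2)\,\delta^2 < E_k^\d/\tau^2$, hence $\rho_1\delta^2 < E_k^\d/\tau^2$. Applying Young's inequality with parameter $\epsilon = 1/(2\tau)$,
\[
\rho_1\sum_{k=m}^{n-1}\|r_{k+1}^\d\|\,\delta \le \epsilon\sum_{k=m+1}^n \rho_1\|r_k^\d\|^2 + \frac{1}{4\epsilon\tau^2}\sum_{k=m+1}^n E_k^\d \le \frac{1}{\tau}\sum_{k=m+1}^n E_k^\d.
\]
Since $\tau > 1$, this is strictly smaller than the $\sum E_k^\d$ budget available on the LHS, where the $\rho_2\|y_k^\d - y_{k-1}^\d\|^2$ component of $E_k^\d$ is matched by $\sum D_{\mu_k^\d}f(y_{k+1}^\d, y_k^\d) \ge c_0\sum\|y_{k+1}^\d - y_k^\d\|^2$ from (\ref{eq:BD}). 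Collecting everything gives (\ref{eq:59}) with positive $c, C$ expressible in terms of $\rho_2, \tau, c_0$.

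The existence of a finite $k_\d$ follows by contradiction: if $k_\d = \infty$, then (\ref{eq:59}) with $m=1$ holds for every $n$, its right-hand side is finite and independent of $n$, so $\sum_{k=1}^\infty E_k^\d < \infty$ and $E_k^\d \to 0$; this contradicts the uniform lower bound $E_k^\d > \max(\rho_1,\rho_2)\tau^2\delta^2 > 0$ derived above. The principal obstacle is the noise absorption in the third step: one must verify that, with $\tau > 1$, the Young's coefficient $\epsilon + 1/(4\epsilon\tau^2)$ is strictly less than the effective coefficient of $\sum E_k^\d$ that the LHS offers once the $\rho_2\|y_k^\d - y_{k-1}^\d\|^2$ component of $E_k^\d$ has been accounted for via (\ref{eq:BD}), and that the resulting constants are controllable by $\rho_2,\tau,c_0$ alone.
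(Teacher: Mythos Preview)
Your proposal is correct and follows essentially the same route as the paper: derive the noisy analogue of (\ref{eq:28}) with the extra term $\rho_1\langle r_{k+1}^\d, b-b^\d\rangle$, absorb that term using the negation of the stopping rule together with strong convexity to retain a positive multiple of $E_{k+1}^\d$ on the left, sum, and then handle the boundary terms via the (\ref{eq:29})--(\ref{eq:299}) mechanism combined with Lemma~\ref{lem:7}; finite termination then follows by contradiction exactly as you describe. The only cosmetic difference is that the paper bounds the noise term directly and termwise via
\[
\rho_1\delta\|r_{k+1}^\d\| \le \frac{\rho_1^2\|r_{k+1}^\d\|^2+\rho_2^2\|s_{k+1}^\d\|^2}{\tau\max(\rho_1,\rho_2)} \le \frac{1}{\tau}\big(\rho_1\|r_{k+1}^\d\|^2+\rho_2\|s_{k+1}^\d\|^2\big),
\]
before summing, which yields the clean constant $c_2=\min\{1-1/\tau,\,c_0/\rho_2\}$ at the single-step level, whereas you sum first and then apply Young's inequality; both lead to the same conclusion with constants depending only on $\rho_2,\tau,c_0$.
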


\begin{proof}
Similar to the derivation of (\ref{eq:28}) and using (\ref{eq:616}) we can obtain for $k\ge 1$ that
\begin{align}\label{eq:61}
& D_{\mu_{k+1}^\d} f(\hat{y}, y_{k+1}^\d) - D_{\mu_k^\d} f(\hat{y}, y_k^\d)+ D_{\mu_k^\d} f(y_{k+1}^\d, y_k^\d) \nonumber\\
& = -\rho_2 \|s_{k+1}^\d\|^2 -\rho_1\|r_{k+1}^\d\|^2 + \rho_1 \left\l r_{k+1}^\d, b-b^\d \right\r
+ \rho_2 \left\l y_{k-1}^\d-y_k^\d, W(\hat{x}-x_{k+1}^\d)\right \r \nonumber\\
& \quad \, -\rho_2\left\l y_k^\d-y_{k+1}^\d, W(\hat{x} - x_{k+1}^\d) \right\r \displaybreak[0] \nonumber\\
& \le -\rho_2 \|s_{k+1}^\d\|^2 -\rho_1\|r_{k+1}^\d\|^2 + \rho_1\d \| r_{k+1}^\d\|
+ \rho_2 \left\l y_{k-1}^\d-y_k^\d, W(\hat{x}-x_{k+1}^\d)\right\r \nonumber\\
& \quad \, -\rho_2 \left\l y_k^\d-y_{k+1}^\d, W(\hat{x} - x_{k+1}^\d) \right\r.
\end{align}
By using the formulation of the stop criterion \eqref{eq:stop}, we can see that for any $k$
satisfying $1\le k <k_\d-1$ there holds
\begin{align*}
\rho _1\d \| r_{k+1}^\d\| &\le \frac{\rho_1^2 \|r_{k+1}^\d\|^2 + \rho_2^2\|s_{k+1}^\d\|^2}{\tau \max(\rho_1,\rho_2)}
\le \frac{1}{\tau} \left(\rho_1 \|r_{k+1}^\d\|^2 + \rho_2 \|s_{k+1}^\d\|^2 \right).
\end{align*}
By the strong convexity of $f$ we have $D_{\mu_k^\d} f(y_{k+1}^\d, y_k^\d) \geq c_0 \|y_k^\d - y_{k+1}^\d\|^2$.
Therefore, by setting
$
c_2 := \min \left\{1 - 1/\tau, c_0/\rho_2\right\} >0
$
and using the definition of $E_k^\d$ we have
\begin{align}\label{eq:48}
& D_{\mu_{k+1}^\d} f(\hat{y}, y_{k+1}^\d) - D_{\mu_k^\d} f(\hat{y}, y_k^\d)+ c_2 E_{k+1}^\d \nonumber\\
& \le  \rho_2 \left\l y_{k-1}^\d-y_k^\d, W(\hat{x}-x_{k+1}^\d)\right\r
-\rho_2 \left\l y_k^\d-y_{k+1}^\d,W( \hat{x} - x_{k+1}^\d)\right\r.
\end{align}
For any two integers $1\le m<n<k_\d$, we sum (\ref{eq:48}) over $k$ from $k=m$ to $k=n-1$ to derive that
\begin{align}\label{eq:62}
& D_{\mu_n^\d} f(\hat{y}, y_n^\d)  + c_2 \sum_{k=m+1}^{n} E_k^\d \nonumber\\
& \le D_{\mu_m^\d} f(\hat{y}, y_m^\d)  + \rho_2 \left\l y_{m-1}^\d-y_m^\d, W(\hat{x}-x_{m+1}^\d) \right\r
 - \rho_2 \left\l y_{n-1}^\d-y_n^\d, W(\hat{x} - x_n^\d) \right\r \nonumber \\
& \quad \, + \rho_2\sum_{k=m}^{n-2} \left\l y_k^\d-y_{k+1}^\d, W(x_{k+1}^\d-x_{k+2}^\d) \right\r.
\end{align}
Let $\varepsilon>0$ be a small number which will be specified later. By using the Cauchy-Schwarz inequality
and the similar arguments for deriving (\ref{eq:29}) and (\ref{eq:299}), we can show that there is
constant $C_\varepsilon>0$  depending only on $\varepsilon$ such that
\begin{align*}
& \sum_{k=m}^{n-2} \left\l y_k^\d-y_{k+1}^\d, W(x_{k+1}^\d-x_{k+2}^\d)\right\r
\le \varepsilon \sum_{k=m+1}^n \|s_k^\d\|^2 + C_\varepsilon \sum_{k=m}^{n-1} \|y_k^\d-y_{k+1}^\d\|^2
\end{align*}
and
\begin{align}\label{eq:69}
- \left\l y_{n-1}^\d-y_n^\d, W(\hat{x} - x_n^\d) \right\r
& \le  \varepsilon \left(\|s_m^\d\|^2 + \|s_n^\d\|^2\right) + \varepsilon (n-m) \|y_{n-1}^\d-y_n^\d\|^2 \nonumber\\
& \quad \, + \frac{1}{4} \|W(\hat{x}-x_m^\d)\|^2 + C_\varepsilon \sum_{k=m}^{n-1} \|y_k^\d-y_{k+1}^\d\|^2.
\end{align}
Combining the above two equations with (\ref{eq:62}), we obtain
\begin{align*}
& D_{\mu_n^\d} f(\hat{y}, y_n^\d) + c_2 \sum_{k=m+1}^{n} E_k^\d   \nonumber\\
& \le D_{\mu_m^\d} f(\hat{y}, y_m^\d)  + \rho_2 \left\l y_{m-1}^\d-y_m^\d,W( \hat{x}-x_{m+1}^\d)\right\r
+ \frac{\rho_2}{4} \|W(\hat{x} - x_m^\d)\|^2 + \varepsilon \rho_2\|s_m^\d\|^2 \nonumber\\
& \quad \, + 2 \varepsilon \rho_2\sum_{k=m+1}^n \|s_k^\d\|^2 + \varepsilon (n-m) \rho_2 \|y_{n-1}^\d -y_n^\d\|^2
+ 2 \rho_2 C_\varepsilon \sum_{k=m}^{n-1} \|y_k^\d-y_{k+1}^\d\|^2.
\end{align*}
By using Lemma \ref{lem:7} we further obtain
\begin{align*}
D_{\mu_n^\d} f(\hat{y}, y_n^\d) & + c_2 \sum_{k=m+1}^{n} E_k^\d
\le D_{\mu_m^\d} f(\hat y, y_m^\d)  + \rho_2 \left\l y_{m-1}^\d-y_m^\d, W(\hat{x}-x_{m+1}^\d)\right\r\\
& + \frac{\rho_2}{4} \|W(\hat{x} - x_m^\d)\|^2 + \varepsilon \rho_2\|s_m^\d\|^2 + 3 \varepsilon \sum_{k=m+1}^n E_k^\d
+ \frac{\rho_2 C_\varepsilon}{2 c_0} E_m^\d.
\end{align*}
Now we take $\varepsilon = c_2/6 $. Then
\begin{align}\label{eq:63}
D_{\mu_n^\d} f(\hat{y}, y_n^\d) + \frac{c_2}{2}  \sum_{k=m+1}^{n} E_k^\d
& \le D_{\mu_m^\d} f(\hat{y}, y_m^\d)  + \rho_2 \left\l y_{m-1}^\d-y_m^\d, W(\hat{x}-x_{m+1}^\d) \right\r \nonumber\\
& + \frac{\rho_2}{4} \|W(\hat{x} - x_m^\d)\|^2 + \varepsilon \rho_2\|s_m^\d\|^2 + \frac{\rho_2 C_\varepsilon}{2 c_0} E_m^\d.
\end{align}
This shows (\ref{eq:59}) immediately.

Finally we show that there is a finite integer $k_\d$ such that (\ref{eq:stop}) is
satisfied. If not, then for any $k\ge 1$ there holds
$$
\rho_1^2\|r_k^\d\|^2+\rho_2^2\|s_k^\d\|^2 >\max(\rho_1^2,\rho_2^2)\tau^2 \d^2
$$
It then follows from (\ref{eq:59}) with $m=1$ that
\begin{align} \label{eq:tmp1}
c (n-1)\max(\rho_1,\rho_2) \tau^2 \d^2 \le  c \sum_{k=2}^n E_k^\d
& \le D_{\mu_1^\d} f(\hat{y}, y_1^\d)  + \rho_2 \left\l y_0^\d-y_1^\d, W(\hat{x}-x_2^\d)\right\r \nonumber\\
& + C\left(\|W(\hat{x} - x_1^\d)\|^2 + \|s_1^\d\|^2 +  E_1^\d\right).
\end{align}
for any integer $n\ge 1$. Letting $n\rightarrow \infty$ yields a contradiction.
\end{proof}

\begin{remark}
{\rm Let $k_\d$ denote the first integer such that (\ref{eq:stop}) is satisfied. Then \eqref{eq:tmp1} holds
for all $n<k_\d$. According to Lemma \ref{lem:stability}, the right hand side of (\ref{eq:tmp1}) can be
bounded by a constant independent of $\d$. Thus, we may use it to conclude that $k_\d = O(\d^{-2})$.
}
\end{remark}

We next derive
some estimates which will be crucially used in the proof of regularization property of Algorithm \ref{alg:noise}.

\begin{lemma}\label{lem:11}
There exist positive constants $c$ and $C$ depending only on $\rho_2$, $\tau$ and $c_0$ such that
for any integer $m<k_\d-1$ there hold
\begin{align*}%\label{eq:71}
D_{\mu_{k_\d}^\d} f(\hat{y}, y_{k_\d}^\d) + c E_{k_\d}^\d
& \le D_{\mu_m^\d} f(\hat{y}, y_m^\d) + \max\{\rho_1, \rho_2\} \tau \d^2 + C\|W(\hat{x} - x_m^\d)\|^2+ CE_m^\d  \nonumber\\
& \quad \, + C\|s_m^\d\|^2  + C\left| \left\l y_{m-1}^\d-y_m^\d, W(\hat{x}-x_{m+1}^\d)\right\r \right|
\end{align*}
and
\begin{align*}
\left|\left\l \mu_{k_\d}^\d, y_{k_\d}^\d- \hat{y}\right\r \right|
\le \left|\left\l \mu_m^\d, y_{k_\d}^\d -\hat{y}\right\r\right|
+ \max\{\rho_1, \rho_2\} \tau \d^2 + C \sum_{k=m}^{k_\d} E_k^\d + C \|W(x_{k_\d}^\d -\hat{x})\|^2,
\end{align*}
where $(\hat x, \hat y)$ denotes any feasible point of (\ref{eq:2}).
\end{lemma}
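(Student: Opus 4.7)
The two inequalities both extend estimates already in hand (namely (\ref{eq:59}) and the exact-data bound (\ref{eq:18})) by adding a careful analysis of the terminal step where the stopping criterion (\ref{eq:stop}) is first triggered. The plan is therefore to run the per-step proof of Lemma \ref{lem:10} one more time, for $k=k_\d-1 \to k=k_\d$, and to use the stopping criterion instead of its negation.

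For the first inequality, I would first apply (\ref{eq:59}) with $n$ replaced by $k_\d-1$, which gives the required estimate of $D_{\mu_{k_\d-1}^\d} f(\hat y, y_{k_\d-1}^\d) + c\sum_{k=m}^{k_\d-1} E_k^\d$ in terms of the right-hand side of the target inequality. Then I would reuse (\ref{eq:61}) at $k=k_\d-1$. The term $\rho_1\d\|r_{k_\d}^\d\|$ which was previously absorbed via $(1/\tau)(\rho_1\|r_{k_\d}^\d\|^2+\rho_2\|s_{k_\d}^\d\|^2)$ can no longer be controlled this way, since the failure of (\ref{eq:stop}) has now been replaced by its fulfilment; instead I would use (\ref{eq:stop}) directly to get $\rho_1\|r_{k_\d}^\d\|\le\max\{\rho_1,\rho_2\}\tau\d$, hence $\rho_1\d\|r_{k_\d}^\d\|\le\max\{\rho_1,\rho_2\}\tau\d^2$. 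Combining with the strong-convexity bound $D_{\mu_{k_\d-1}^\d}f(y_{k_\d}^\d,y_{k_\d-1}^\d)\ge c_0\|y_{k_\d}^\d-y_{k_\d-1}^\d\|^2$, the residual terms $\rho_1\|r_{k_\d}^\d\|^2$ and $\rho_2\|s_{k_\d}^\d\|^2$ dominate a constant multiple of $E_{k_\d}^\d$ for $c\le\min\{1,c_0/\rho_2\}$. The remaining cross terms $\rho_2\langle y_{k_\d-2}^\d-y_{k_\d-1}^\d,W(\hat x-x_{k_\d}^\d)\rangle$ and $\rho_2\langle y_{k_\d-1}^\d-y_{k_\d}^\d,W(\hat x-x_{k_\d}^\d)\rangle$ I would handle as in the derivation of (\ref{eq:299}): split $W(\hat x-x_{k_\d}^\d)=W(\hat x-x_m^\d)+W(x_m^\d-x_{k_\d}^\d)$, telescope the second piece via $W(x_{k+1}^\d-x_k^\d)=s_{k+1}^\d-s_k^\d-(y_k^\d-y_{k+1}^\d)$, and apply Cauchy–Schwarz with an arbitrarily small weight on the $s_k^\d$ and $\|y_k^\d-y_{k+1}^\d\|^2$ terms so that they are absorbed by the $\sum E_k^\d$ already produced by (\ref{eq:59}), while the $\|W(\hat x-x_m^\d)\|^2$ piece is absorbed into the $\|W(\hat x-x_m^\d)\|^2$ on the right-hand side. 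Monotonicity of $\{E_k^\d\}$ from Lemma \ref{lem:7} lets us bound $(k_\d-m)\rho_2\|y_{k_\d-1}^\d-y_{k_\d}^\d\|^2$ by $\sum_{k=m+1}^{k_\d} E_k^\d$.

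For the second inequality I would mimic the exact-data derivation of (\ref{eq:18}). Writing
\begin{align*}
\langle \mu_{k_\d}^\d-\mu_m^\d,y_{k_\d}^\d-\hat y\rangle
=\rho_2\sum_{k=m}^{k_\d-1}\langle s_{k+1}^\d,y_{k_\d}^\d-\hat y\rangle
\end{align*}
and substituting $y_{k_\d}^\d-\hat y=-s_{k_\d}^\d+W(x_{k_\d}^\d-\hat x)$, I would apply the noisy-data analog of Lemma \ref{lem1}, namely (\ref{eq:616}), to rewrite $\rho_2\langle s_{k+1}^\d,W(x_{k_\d}^\d-\hat x)\rangle$ as $-\rho_1\langle r_{k+1}^\d,A(x_{k_\d}^\d-\hat x)\rangle$ plus a telescoping $y$-difference term. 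Since $A(x_{k_\d}^\d-\hat x)=r_{k_\d}^\d+(b^\d-b)$, the noise introduces an extra $\rho_1\langle r_{k+1}^\d,b^\d-b\rangle$ bounded by $\rho_1\d\|r_{k+1}^\d\|$. For $k+1<k_\d$ I would reuse the argument from Lemma \ref{lem:10}, bounding this by $(1/\tau)E_{k+1}^\d$; for $k+1=k_\d$ the stopping criterion gives $\rho_1\d\|r_{k_\d}^\d\|\le\max\{\rho_1,\rho_2\}\tau\d^2$. The inner products $\rho_1\langle r_{k+1}^\d,r_{k_\d}^\d\rangle$ and $\rho_2\langle s_{k+1}^\d,s_{k_\d}^\d\rangle$ are bounded via Cauchy–Schwarz by $\sum E_k^\d$, while the telescoped $y$-difference terms are controlled by $\|W(x_{k_\d}^\d-\hat x)\|^2$ and $\|y_{m-1}^\d-y_m^\d\|^2\le E_m^\d/\rho_2$ times a small weight. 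Rearranging and using the triangle inequality $|\langle \mu_{k_\d}^\d,y_{k_\d}^\d-\hat y\rangle|\le|\langle\mu_m^\d,y_{k_\d}^\d-\hat y\rangle|+|\langle\mu_{k_\d}^\d-\mu_m^\d,y_{k_\d}^\d-\hat y\rangle|$ produces the stated bound.

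The main obstacle is bookkeeping the cross terms involving $W(\hat x-x_{k_\d}^\d)$ (or $W(x_{k_\d}^\d-\hat x)$): the right-hand side only allows $\|W(\hat x-x_m^\d)\|^2$ (or $\|W(x_{k_\d}^\d-\hat x)\|^2$), so one must carefully weight the Cauchy–Schwarz applications with small parameters $\varepsilon>0$ so that the produced $\|s_k^\d\|^2$ and $\|y_k^\d-y_{k+1}^\d\|^2$ terms are absorbed into the $c E_{k_\d}^\d$ (or $\sum E_k^\d$) already present on the left-hand side, using monotonicity of $E_k^\d$ and $\|y_k^\d-y_{k+1}^\d\|^2\le E_k^\d/\rho_2$. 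The choice of the small parameter and the subsequent identification of $c$ and $C$ as functions of $\rho_2$, $\tau$, $c_0$ is where the calculation becomes most delicate.
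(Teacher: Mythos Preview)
Your proposal is correct and follows essentially the same route as the paper's proof: for the first estimate, apply (\ref{eq:61}) at the terminal index $k=k_\d-1$, replace the unavailable $(1/\tau)$-absorption by the stopping bound $\rho_1\|r_{k_\d}^\d\|\le\max\{\rho_1,\rho_2\}\tau\d$, telescope the cross terms against $W(\hat x-x_m^\d)$ as in (\ref{eq:299})/(\ref{eq:69}), and combine with (\ref{eq:59}) at $n=k_\d-1$; for the second estimate, repeat the derivation of (\ref{eq:18}) in the noisy setting, picking up the extra $\rho_1\d\sum\|r_k^\d\|$ term and splitting it as you describe. The bookkeeping obstacle you flag---choosing $\varepsilon$ small enough so that the produced $\sum E_k^\d$ pieces are absorbed---is exactly where the paper's calculation concentrates, and your plan for handling it matches the paper's.
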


\begin{proof}
By using (\ref{eq:61}) with $k = k_\d-1$, the strong convexity of $f$, and the fact $\rho_1\|r_{k_\d}^\d\| \le \max(\rho_1, \rho_2)\tau \d$, we have
\begin{align*}
D_{\mu_{k_\d}^\d} f(\hat{y}, y_{k_\d}^\d) + c_0 \|y_{k_\d}^\d -y_{k_\d-1}^\d\|^2
& \le D_{\mu_{k_\d-1}^\d} f(\hat{y}, y_{k_\d-1}^\d) -\rho_2 \|s_{k_\d}^\d\|^2 \\
& -\rho_1\|r_{k_\d}^\d\|^2 + \max\{\rho_1, \rho_2\} \tau \d^2 \\
& + \rho_2 \left\l y_{k_\d-2}^\d-y_{k_\d-1}^\d, W(\hat{x}-x_{k_\d}^\d)\right\r  \\
& -\rho_2 \left\l y_{k_\d-1}^\d-y_{k_\d}^\d, W(\hat{x} - x_{k_\d}^\d) \right\r.
\end{align*}
Let $\varepsilon>0$ be a  small number specified later. Similar to (\ref{eq:299}) and (\ref{eq:69}) we can derive for $m<k_\d-1$ that
\begin{align*}
&\left\l y_{k_\d-2}^\d-y_{k_\d-1}^\d, W(\hat{x}-x_{k_\d}^\d)\right\r
-\left\l y_{k_\d-1}^\d-y_{k_\d}^\d, W(\hat{x} - x_{k_\d}^\d)\right\r \\
& \le \frac{1}{2} \|W(\hat{x}-x_m^\d)\|^2 +\varepsilon \|s_m^\d\|^2 +\frac{\varepsilon}{\rho_2}\|E_{k_\d}^\d\|^2
   + \frac{2 \varepsilon}{\rho_2} \sum_{k=m+1}^{k_\d-1} E_k^\d + \frac{C_\varepsilon}{4 c_0} E_m^\d.
\end{align*}
Therefore
\begin{align*}
& D_{\mu_{k_\d}^\d} f(\hat{y}, y_{k_\d}^\d) + c_0 \|y_{k_\d}^\d -y_{k_\d-1}^\d\|^2+\rho_1 \|r_{k_\d}^\d\|^2
+\rho_2\|s_{k_\d}^\d\|^2 \displaybreak[0]\\
& \le D_{\mu_{k_\d-1}^\d} f(\hat{y}, y_{k_\d-1}^\d)  + \max\{\rho_1, \rho_2\} \tau \d^2
+ \frac{\rho_2}{2} \|W(\hat x-x_m^\d)\|^2 +\varepsilon \rho_2\|s_m^\d\|^2 \displaybreak[0]\\
& \quad \, +\varepsilon\|E_{k_\d}^\d\|^2 + 2 \varepsilon \sum_{k=m+1}^{k_\d-1} E_k^\d + \frac{\rho_2C_\varepsilon}{4c_0} E_m^\d.
\end{align*}
By taking $\varepsilon = \min\{1,c_0/\rho_2\}/2$, we obtain with a constant $c_3 = \varepsilon$ that
\begin{align}\label{eq:66}
D_{\mu_{k_\d}^\d} f(\hat{y}, y_{k_\d}^\d) + c_3 E_{k_\d}^\d
& \le  D_{\mu_{k_\d-1}^\d} f(\hat{y}, y_{k_\d-1}^\d)  + \max\{\rho_1, \rho_2\} \tau \d^2
+ \frac{\rho_2}{2}\|W(\hat x-x_m^\d)\|^2 \nonumber \\
& \quad \, +\varepsilon \rho_2\|s_m^\d\|^2 + 2 \varepsilon \sum_{k=m+1}^{k_\d-1} E_k^\d +\frac{\rho_2 C_\varepsilon}{4c_0} E_m^\d.
\end{align}
An application of (\ref{eq:59}) with $n= k_\d-1$ then gives the first estimate.

To see the second one, we apply similar argument for deriving (\ref{eq:18}) and the Cauchy-Schwarz inequality to obtain
\begin{align*}
&\left| \left\l \mu_{k_\d}^\d - \mu_m^\d, y_{k_\d}^\d - \hat{y}\right\r \right|\\
& \le  \sum_{k=m+1}^{k_\d} E_k^\d  + \rho_1 \left|\sum_{k=m+1}^{k_\d} \left\l r_{k}^\d, b^\d-b \right\r\right|
+\rho_2\left|\left\l y_{k_\d-1}^\d -y_{k_\d}^\d, W(x_{k_\d}^\d -\hat{x})\right\r \right| \displaybreak[0]\\
& \quad \, +\rho_2 \left| \left\l y_{m-1}^\d -y_m^\d, W(x_{k_\d}^\d -\hat{x}) \right\r\right| \displaybreak[0]\\
&  \le  \sum_{k=m+1}^{k_\d} E_k^\d  + \rho_1\d \sum_{k=m+1}^{k_\d} \| r_k^\d\|
+E_{k_\d}^\d +  E_m^\d + \frac{\rho_2}{2} \|W(x_{k_\d}^\d -\hat x)\|^2.
\end{align*}
%where the last inequality follows by the Cauchy-Schwarz inequality and the fact $\rho_2 \|y_k^\d -y_{k-1}^\d\|^2 \le E_k^\d$.
Note that $\rho_1\|r_{k_\d}^\d\| \le \max(\rho_1, \rho_2)\tau \d$ and $\max(\rho_1^2, \rho_2^2)\tau^2 \d^2
\le \rho_1^2\|r_k^\d\|^2 +\rho_2^2 \|s_k^\d\|^2$ for $k<k_\d$. We thus obtain
\begin{align*}
\rho_1\d \sum_{k=m+1}^{k_\d} \|r_k^\d\|
 = \rho_1 \d\|r_{k_\d}^\d\| +  \sum_{k=m+1}^{k_\d-1} \rho_1 \d \|r_k^\d\|
\le \max\{\rho_1, \rho_2\} \tau \d^2 + \frac{1}{\tau} \sum_{k=m+1}^{k_\d-1}  E_k^\d.
\end{align*}
Therefore
\begin{align*}
\left| \left\l \mu_{k_\d}^\d - \mu_m^\d, y_{k_\d}^\d - \hat{y} \right\r \right|
\le \left(2+ \frac{1}{\tau}\right) \sum_{k=m}^{k_\d} E_k^\d  + \max\{\rho_1, \rho_2\} \tau \d^2
+\frac{\rho_2}{2} \|W(x_{k_\d}^\d - \hat{x})\|^2
\end{align*}
which gives the desired estimate.
\end{proof}

\begin{theorem}
Let \emph{(\textbf{A1})--(\textbf{A4})} hold and let $b$ be consistent. Let $x^*$ be the unique solution of (\ref{eq:1})
and let $y^* = W x^*$. Let $k_\d$ denote the first integer such that (\ref{eq:stop}) is satisfied.
Then for Algorithm \ref{alg:noise} there hold
$$
x_{k_\d}^\d \rightarrow x^*, \quad y_{k_\d}^\d\rightarrow y^*,\quad W x_{k_\d}^\d \rightarrow y^*,
\quad f(y_{k_\d}^\d)\rightarrow f(y^*), \quad D_{\mu_{k_\d}^\d} f(y^*, y_{k_\d}^\d) \rightarrow 0
$$
as $\d \rightarrow 0$.
\end{theorem}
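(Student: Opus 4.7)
The plan is to establish $D_{\mu_{k_\delta}^\delta} f(y^*, y_{k_\delta}^\delta) \to 0$ and $E_{k_\delta}^\delta \to 0$ as $\delta\to 0$. From these two facts the remaining conclusions unfold: the strong-convexity bound \eqref{eq:BD} gives $y_{k_\delta}^\delta \to y^*$; the vanishing of $E_{k_\delta}^\delta$ together with $\|b^\delta-b\|\le\delta$ gives $Ax_{k_\delta}^\delta\to Ax^*$ and $Wx_{k_\delta}^\delta\to Wx^*=y^*$, and then assumption (\textbf{A4}) forces $x_{k_\delta}^\delta\to x^*$.

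The workhorse is the first estimate in Lemma~\ref{lem:11}, applied at the feasible point $(\hat x,\hat y)=(x^*,y^*)$: for every integer $m<k_\delta-1$,
\[
D_{\mu_{k_\delta}^\delta} f(y^*, y_{k_\delta}^\delta) + c E_{k_\delta}^\delta
\le D_{\mu_m^\delta} f(y^*, y_m^\delta) + \max\{\rho_1,\rho_2\}\tau\delta^2 + C\Phi_m^\delta,
\]
where $\Phi_m^\delta$ collects $\|W(x^*-x_m^\delta)\|^2$, $E_m^\delta$, $\|s_m^\delta\|^2$ and $|\langle y_{m-1}^\delta-y_m^\delta, W(x^*-x_{m+1}^\delta)\rangle|$. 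I would take a two-step diagonal limit: first send $\delta\to 0$ with $m$ fixed, using Lemma~\ref{lem:stability} to replace every noisy quantity by its exact counterpart and to kill the $\delta^2$ term; then send $m\to\infty$, using Theorem~\ref{thm:exact} and Lemma~\ref{lem4} to make $D_{\mu_m}f(y^*,y_m)$, $\|W(x^*-x_m)\|$, $E_m$, $\|s_m\|$ and $\|y_{m-1}-y_m\|$ vanish. This yields both core convergences.

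For the $f$-convergence I would expand $D_{\mu_{k_\delta}^\delta} f(y^*,y_{k_\delta}^\delta) = f(y^*) - f(y_{k_\delta}^\delta) - \langle \mu_{k_\delta}^\delta, y^*-y_{k_\delta}^\delta\rangle$, so that it suffices to show $\langle \mu_{k_\delta}^\delta, y_{k_\delta}^\delta-y^*\rangle \to 0$. Here I would invoke the second estimate of Lemma~\ref{lem:11}: with the same "fix $m$, send $\delta\to 0$, then $m\to\infty$" scheme, the term $|\langle\mu_m^\delta,y_{k_\delta}^\delta-y^*\rangle|$ vanishes by stability combined with $y_{k_\delta}^\delta\to y^*$, the $\|W(x_{k_\delta}^\delta-x^*)\|^2$ term has already been controlled above, and the sum $\sum_{k=m}^{k_\delta}E_k^\delta$ is bounded by reapplying the first estimate of Lemma~\ref{lem:11} (plus $E_{k_\delta}^\delta\to 0$). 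This closes $f(y_{k_\delta}^\delta)\to f(y^*)$ and $D_{\mu_{k_\delta}^\delta}f(y^*,y_{k_\delta}^\delta)\to 0$ simultaneously.

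The main obstacle is the restriction $m<k_\delta-1$ in Lemma~\ref{lem:11}: the "$m$ fixed, $\delta\to 0$" step is legitimate only on subsequences along which $k_\delta\to\infty$. The remaining possibility, a subsequence $\delta_n\to 0$ with $k_{\delta_n}$ bounded (hence, after extraction, constantly equal to some $k^*$), must be handled separately. On such a subsequence Lemma~\ref{lem:stability} makes $x_{k^*}^{\delta_n},y_{k^*}^{\delta_n},\mu_{k^*}^{\delta_n}$ converge to the exact-ADMM values at step $k^*$, and the stopping criterion \eqref{eq:stop} passes to the limit to force $Ax_{k^*}=b$ and $Wx_{k^*}=y_{k^*}$; upgrading this feasibility of $(x_{k^*},y_{k^*})$ to the identification $x_{k^*}=x^*$ -- by exploiting the monotonicity of $E_k$ in Lemma~\ref{lem3}, the convergence in Theorem~\ref{thm:exact}, and the uniqueness asserted by Theorem~\ref{thm:exist} -- is where the argument is most delicate.
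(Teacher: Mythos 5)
Your outline is essentially the paper's own proof: the same subsequence dichotomy, the same use of the first estimate of Lemma~\ref{lem:11} at $(\hat x,\hat y)=(x^*,y^*)$ with the ``fix $m$, let $\delta\to0$ via Lemma~\ref{lem:stability}, then let $m\to\infty$ via Theorem~\ref{thm:exact}'' limit, and the same reduction of $f(y_{k_\delta}^\delta)\to f(y^*)$ to $\langle\mu_{k_\delta}^\delta,y_{k_\delta}^\delta-y^*\rangle\to0$ through the second estimate of Lemma~\ref{lem:11} (for bounding $\sum_{k=m}^{k_\delta}E_k^\delta$ you will in fact want \eqref{eq:59} from Lemma~\ref{lem:10} rather than Lemma~\ref{lem:11} itself, but that is a labelling issue, not a mathematical one).

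The one place you have not actually closed the argument is the case you rightly flag as delicate: a subsequence with $k_{\delta_i}\equiv k_0$ constant. Passing \eqref{eq:stop} to the limit via Lemma~\ref{lem:stability} does give $Ax_{k_0}=b$ and $Wx_{k_0}=y_{k_0}$, but feasibility of $(x_{k_0},y_{k_0})$ alone does not identify it with $(x^*,y^*)$, and the tools you name for the upgrade --- monotonicity of $E_k$ from Lemma~\ref{lem3} plus uniqueness --- will not do it: $E_{k_0}=\rho_2\|y_{k_0}-y_{k_0-1}\|^2$ need not vanish just because $E_k$ is decreasing. The missing idea is to show the \emph{exact} iteration is stationary from step $k_0$ on. From $r_{k_0}=0$ and $s_{k_0}=0$ the multiplier updates \eqref{eq:9}--\eqref{eq:10} give $\lambda_{k_0}=\lambda_{k_0-1}$ and $\mu_{k_0}=\mu_{k_0-1}$; since $\mu_k\in\partial f(y_k)$, the strong monotonicity \eqref{eq:77} applied to $\mu_{k_0}-\mu_{k_0-1}=0$ forces $y_{k_0}=y_{k_0-1}$. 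Then the data entering the subproblems \eqref{eq:3}--\eqref{eq:4} at step $k_0+1$ coincide with those at step $k_0$, so by uniqueness of the minimizers (Lemma~\ref{lem:w2}) the sequence $(x_k,y_k,\lambda_k,\mu_k)$ is constant for $k\ge k_0$, and only now does Theorem~\ref{thm:exact} force $x_{k_0}=x^*$, $y_{k_0}=y^*$; Lemma~\ref{lem:stability} then transfers this to the noisy iterates. Without this stationarity step the constant-$k_\delta$ case is open, so you should regard it as a genuine gap rather than a routine detail.
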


\begin{proof}
We show the convergence result by considering two cases via a subsequence-subsequence argument.

Assume first that $\{b^{\d_i}\}$ is a sequence satisfying $\|b^{\d_i}-b\|\le \d_i$ with $\d_i\rightarrow 0$
such that $k_{\d_i} = k_0$ for all $i$, where $k_0$ is a finite integer. By the definition of $k_{\d_i}$ we have
$$
\rho_1^2\|A x_{k_0}^{\d_i} - b^{\d_i}\|^2 +\rho_2^2\|Wx_{k_0}^{\d_i} - y_{k_0}^{\d_i}\|^2 \le \max(\rho_1^2, \rho_2^2)\tau^2 \d_i^2.
$$
Letting $i\rightarrow \infty$ and using Lemma \ref{lem:stability}, we can obtain
$A x_{k_0} = b$ and $Wx_{k_0} = y_{k_0}$. This together with the definition of $\la_k$ and $\mu_k$ implies
that $\la_{k_0}=\la_{k_0-1}$ and $\mu_{k_0}=\mu_{k_0-1}$.
Recall that $\mu_k\in \p f(y_k)$, we may use (\ref{eq:77}) to obtain
$$
0 = \l \mu_{k_0}-\mu_{k_0-1}, y_{k_0}-y_{k_0-1}\r \ge 2 c_0 \|y_{k_0}-y_{k_0-1}\|^2
$$
which implies $y_{k_0}=y_{k_0-1}$. Now we can use (\ref{eq:3}) and (\ref{eq:4}) to conclude that $x_{k_0+1} = x_{k_0}$
and $y_{k_0+1}=y_{k_0}$. Repeating this argument we can derive that $x_k= x_{k_0}$, $y_k=y_{k_0}$, $\la_k=\la_{k_0}$
and $\mu_k=\mu_{k_0}$ for all $k\ge k_0$. In view of Theorem \ref{thm:exact}, we must have $x_{k_0}= x^*$ and $y_{k_0}=y^*$.
With the help of Lemma \ref{lem:stability}, the desired conclusion then follows.

Assume next that $\{b^{\d_i}\}$ is a sequence satisfying $\|b^{\d_i}-b\|\le \d_i$ with $\d_i\rightarrow 0$
such that $k_i:=k_{\d_i} \rightarrow \infty$ as $i\rightarrow \infty$. We first show that
\begin{align}\label{eq:64}
D_{\mu_{k_i}^{\d_i}} f(y^*, y_{k_i}^{\d_i}) \rightarrow 0 \quad \mbox{ as } i\rightarrow \infty.
\end{align}
Let $m\ge 1$ be any integer. Then $k_i> m+1$ for large $i$. Thus we may use Lemma \ref{lem:11} to conclude that
\begin{align*}
D_{\mu_{k_i}^{\d_i}} f(y^*, y_{k_i}^{\d_i})
& \le D_{\mu_m^{\d_i}} f(y^*, y_m^{\d_i}) + \max\{\rho_1, \rho_2\} \tau \d_i^2 + C\|W(x^* - x_m^{\d_i})\|^2 + C\|s_m^{\d_i}\|^2 \\
& \quad \, + C E_m^{\d_i} + C\left| \l y_{m-1}^{\d_i}-y_m^{\d_i}, W(x^*-x_{m+1}^{\d_i}) \r \right|.
\end{align*}
By virtue of Lemma \ref{lem:stability}, we have
\begin{align*}
\limsup_{i\rightarrow \infty} D_{\mu_{k_i}^{\d_i}} f(y^*, y_{k_i}^{\d_i})
& \le D_{\mu_m} f(y^*, y_m) + C\|W(x^* -x_m)\|^2 + C\|s_m\|^2 + C E_m\\
& \quad \, + C \left|\l y_{m-1}-y_m, W(x^*-x_{m+1})\r\right|.
\end{align*}
Letting $m\rightarrow \infty$ and using Theorem \ref{thm:exact}, we obtain
$$
\limsup_{i\rightarrow \infty} D_{\mu_{k_i}^{\d_i}} f(y^*, y_{k_i}^{\d_i}) \le 0
$$
which shows (\ref{eq:64}). Now by using the strong convexity of $f$ we can conclude that
$y_{k_i}^{\d_i} \rightarrow y^*$ as $i\rightarrow \infty$. Since $\rho_1^2\|Ax_{k_i}^{\d_i} - b^{\d_i}\|
+ \rho_2^2\|Wx_{k_i}^{\d_i}-y_{k_i}^{\d_i}\|^2 \le \max(\rho_1^2, \rho_2^2)\tau^2 \d_i^2$,
we also have $A x_{k_i}^{\d_i} \rightarrow b$ and $W x_{k_i}^{\d_i} \rightarrow y^*$ as $i \rightarrow \infty$.
In view of (\textbf{A4}), we have
$$
c_1 \|x_{k_i}^{\d_i} - x^*\| \le \|A x_{k_i}^{\d_i} - b\|^2 + \|W x_{k_i}^{\d_i} - y^*\|^2
$$
which implies that $x_{k_i}^{\d_i} \rightarrow x^*$ as $i\rightarrow \infty$.

Finally, we show that $f(y_{k_i}^{\d_i}) \rightarrow f(y^*)$ as $i\rightarrow\infty$.
According to (\ref{eq:64}), it suffices to show that
\begin{align}\label{eq:81}
\l u_{k_i}^{\d_i}, y^* - y_{k_i}^{\d_i}\r \rightarrow 0 \quad \mbox{ as } i\rightarrow \infty.
\end{align}
By virtue of Lemma \ref{lem:11}, $y_{k_i}^{\d_i} \rightarrow y^*$
and $Wx_{k_i}^{\d_i} \rightarrow Wx^*$, we have
$$
\limsup_{i\rightarrow \infty} \left|\l \mu_{k_i}^{\d_i}, y^* - y_{k_i}^{\d_i}\r\right|
\le C \limsup_{i\rightarrow \infty} \sum_{k=m}^{k_i} E_k^{\d_i}.
$$
In view of Lemma \ref{lem:10}, Lemma \ref{lem:11} and Lemma \ref{lem:stability}, we can obtain
\begin{align*}
\limsup_{i\rightarrow \infty} \left| \l  \mu_{k_i}^{\d_i}, y^* - y_{k_i}^{\d_i}\r\right|
& \le C \Big( D_{\mu_m} f(y^*, y_m) + \left|\l y_{m-1}-y_m, W(x^*-x_{m+1})\r\right| \\
& \quad \, + \|W(x^* - x_m)\|^2 + \|s_m\|^2 + E_m\Big)
\end{align*}
for any integer $m$. Letting $m\rightarrow \infty$ and using Theorem \ref{thm:exact}, it follows
\begin{align*}
\limsup_{i\rightarrow \infty} \left| \l  \mu_{k_i}^{\d_i}, y^* - y_{k_i}^{\d_i}\r\right| \le 0
\end{align*}
which shows (\ref{eq:81}). The proof is therefore complete.
\end{proof}

\section{\bf Numerical experiments}

In this section we will present various numerical results for 1-dimensional as well as 2-dimensional
problems to show the efficiency of Algorithm \ref{alg:noise}. All the experiments are done on a four-core
laptop with 1.90 GHz and 8 GB RAM. First we give the setup for the data generation, the choice of parameters
and the stopping rule. In all numerical examples the sought solutions $x^*$ are assumed to be known, and the observational
data $b^{\delta}$ is generated by $b^{\delta} = A x^* + \eta$, where $\eta$ denotes the additive measurement
noise with $ \delta = \|\eta\|$. The function $f$ is chosen as $f(\cdot) = \|\cdot\|_{*}
+ \frac{\nu}{2}\|\cdot\|^2$ for a fixed $\nu = 0.001$ (the results are not sensitive on $\nu$)
with possible different norms $\|\cdot\|_{*}$. More precisely, $\|\cdot\|_{*}$ is a weighted $\ell^1$
norm in case wavelet frame is used \cite{CaiDongOsherShen:2012}, and is the $\ell^1$ norm
for other cases. We also take the initial guess $y_0$, $\la_0$, $\mu_0$ to be the zero elements and fix $\tau = 1.0001$
in all experiments. The numerical results are not sensitive to the parameters $\rho_1$ and $\rho_2$; so we
fix them as $(\rho_1,\rho_2) = (1000,10)$ in subsections
\ref{sec:3.1}--\ref{sec:3.4}. The operators $A, W$ and the noise level $\delta$ will be
specified in each example. The ADMM codes can be found in {\tt http://xllv.whu.edu.cn/}.

\subsection{ One-dimensional deconvolution}\label{sec:3.1}

In this subsection we consider the one dimensional deconvolution problem of the form
\begin{equation}\label{deconv1d}
b^{\delta}(s) = \int_0^1 k(s,t) x(t) dt + \eta(s):=(Ax)(s) +\eta(s) \quad \mbox{on } [0,1],
\end{equation}
where
$
k(s,t) = \frac{\gamma}{\sqrt{\pi}} \exp(-\frac{(s-t)^2}{2\gamma^2})
$
with $\gamma= 0.01$. This problem arises from an inverse heat conduction.  To find the
sought solution $x^*$ from $b^\d$ numerically, we divide
$[0,1]$ into $N=400$ subintervals of equal length and approximate integrals by the midpoint
rule. Let $x^*$ be sparse, we take $W= I$ the identity. Numerical results are reported in
Figure \ref{fig:deconvld} which shows that Algorithm \ref{alg:noise} can capture the features
of solutions as the function $f$ is properly chosen. Moreover, when the noise level decreases, more
iterations are needed and more accurate approximate solutions can be obtained.
\begin{figure}[ht!]
  \centering
  \begin{tabular}{ccc}
    \includegraphics[trim = 0cm 0cm 0cm 0cm, clip=true,width=0.3\textwidth]{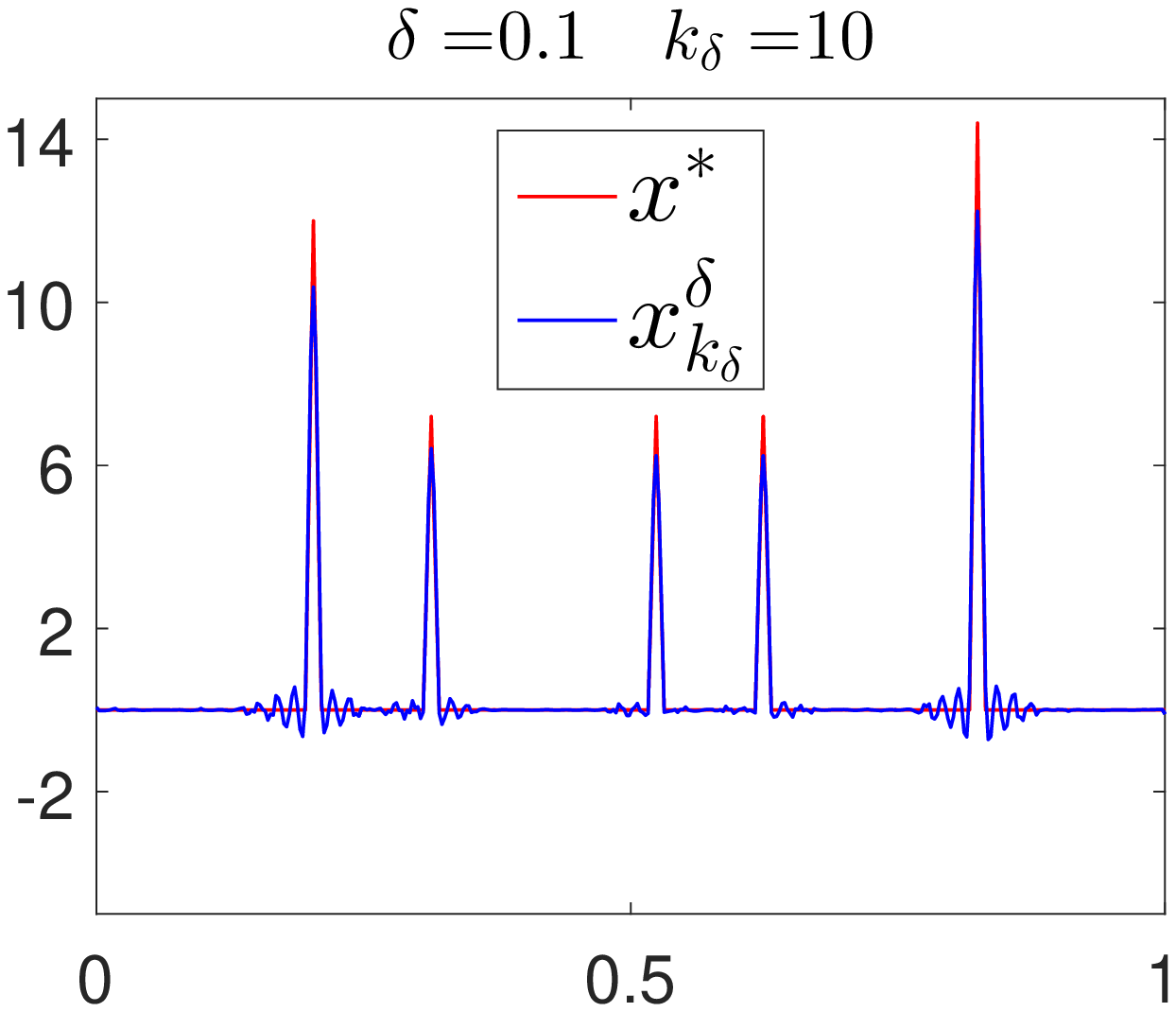}
    &\includegraphics[trim = 0cm 0cm 0cm 0cm, clip=true,width=0.3\textwidth]{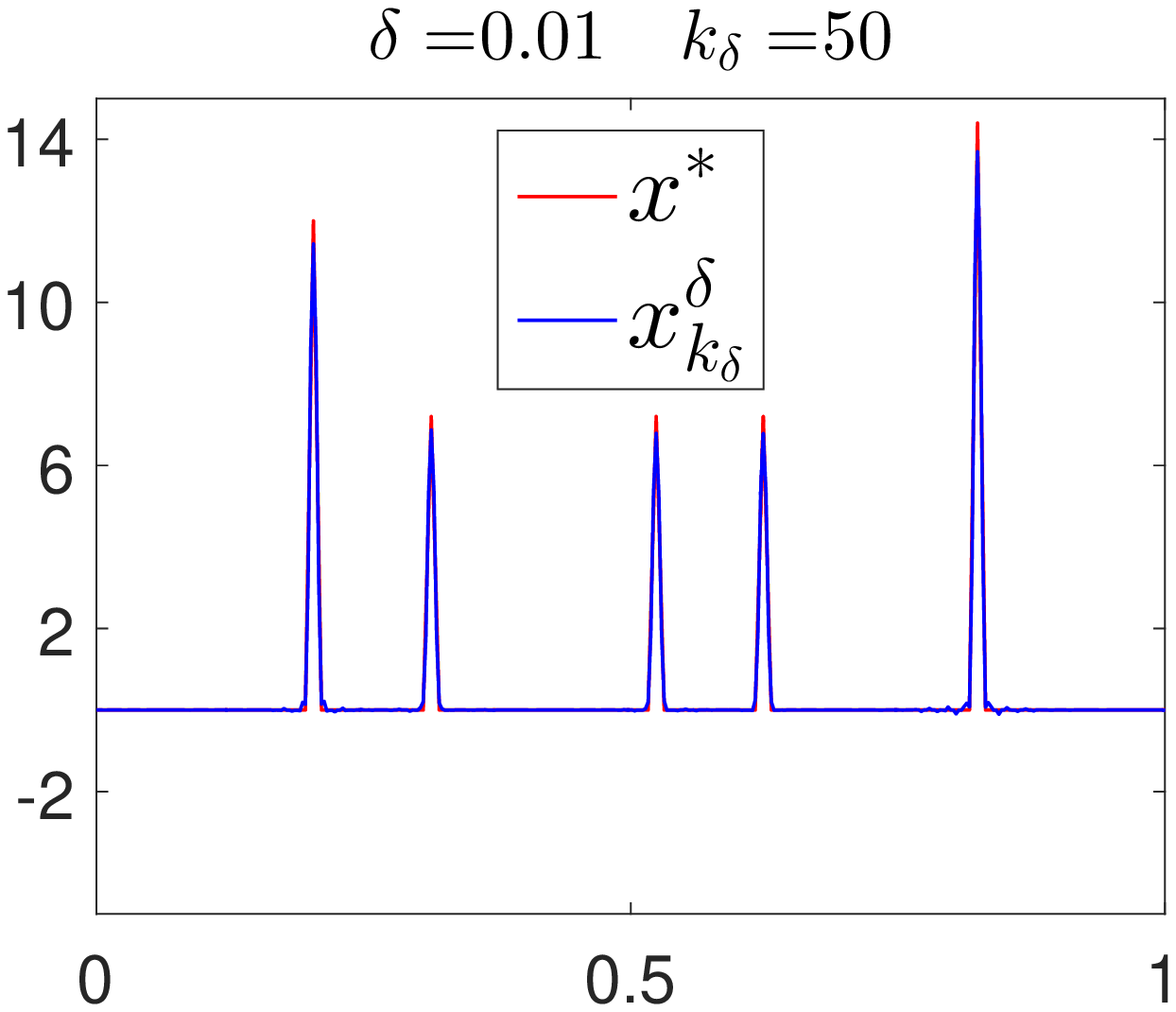}
    &\includegraphics[trim = 0cm 0cm 0cm 0cm, clip=true,width=0.3\textwidth]{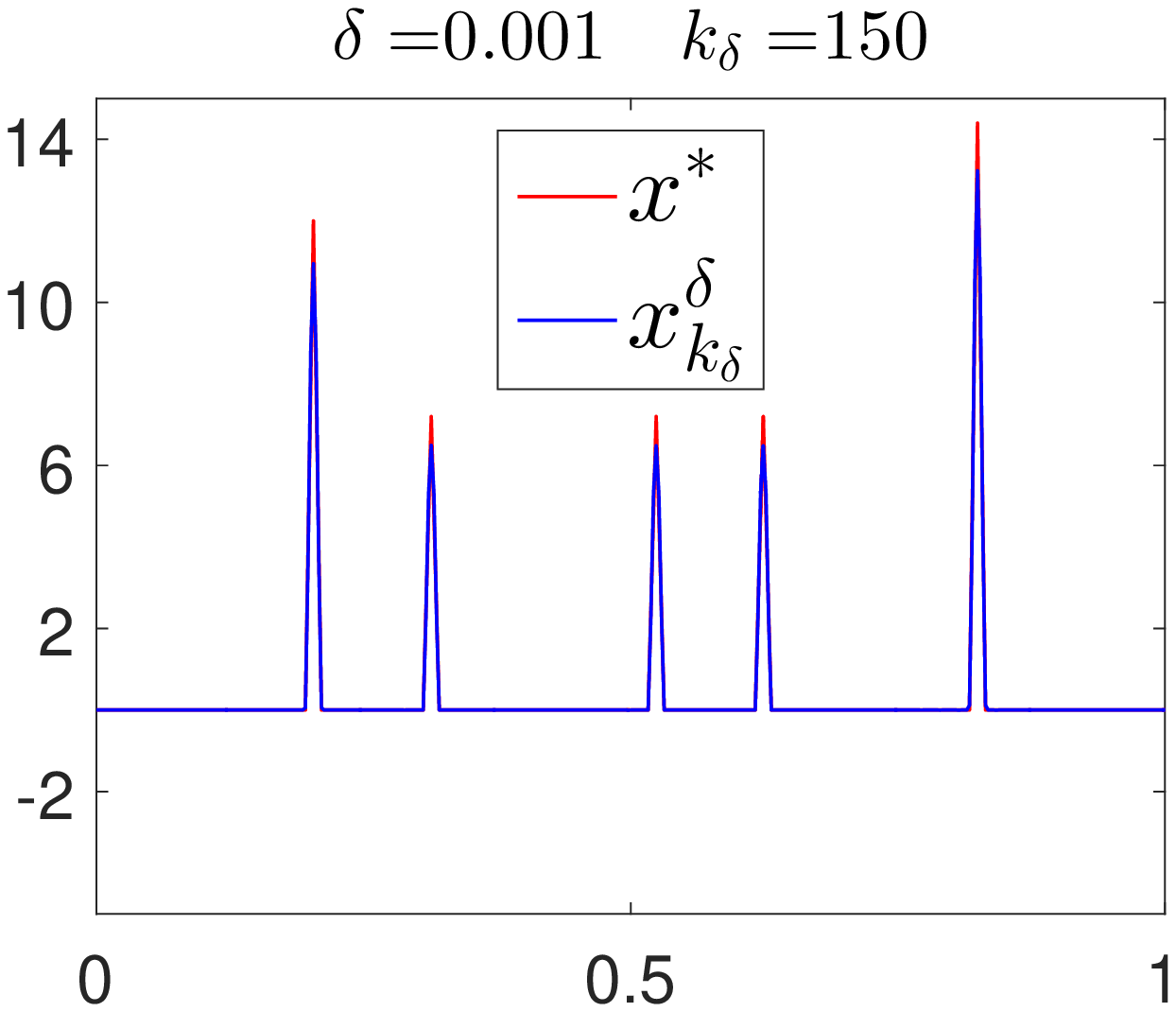} \\
    %\multicolumn{2}{c}{(a) $\ell^0$} & \multicolumn{2}{c}{(b) $\ell^{1/2}$}\\
%    \includegraphics[trim = 0cm 0cm 0cm 0cm, clip=true,width=0.3\textwidth]{fig_3_1_4.eps}
%    &\includegraphics[trim = 0cm 0cm 0cm 0cm, clip=true,width=0.3\textwidth]{fig_3_1_5.eps}
%    &\includegraphics[trim = 0cm 0cm 0cm 0cm, clip=true,width=0.3\textwidth]{fig_3_1_6.eps} \\
     %\multicolumn{2}{c}{(d) SCAD} & \multicolumn{2}{c}{(e) MCP}\\
    %\includegraphics[trim = 0cm 0cm 0cm 0cm, clip=true,width=3.4cm]{convsupcl1_ac.eps} &\includegraphics[trim = 0cm 0cm 0cm 0cm, clip=true,width=3.2cm]{convsupcl1_it.eps} &\\
    %\multicolumn{2}{c}{(c) capped-$\ell^1$} &
  \end{tabular}
\caption{Reconstruction results for Section \ref{sec:3.1}.
% $x^{\dag}$ is sparse (Top panel) and $x^{\dag}$ is piecewise constant (Top panel).
%$\delta = 1e-1$ (left panel), $\delta = 1e-2$ (middle panel), $\delta = 1e-4$ (right panel) for sparse case and $\delta = 1e-1$ (left panel), $\delta = 1e-2$ (middle panel), $\delta = 1e-3$ (right panel) for piece wise constant case.
}\label{fig:deconvld}
\end{figure}

\subsection{ Two-dimensional TV deblurring}

In this and next subsections we test the performance of Algorithm \ref{alg:noise} on image deblurring
problems whose objective is to reconstruct the unknown true image $x^*\in {\mathbb R}^{M\times N}$
from an observed image $b^\d = A x^* +\eta$ degraded by a linear blurring operator $A$ and a Gaussian
noise $\eta$. We consider the case that the blurring operator is shift invariant so that $A$ is a
convolution operator whose kernel is a point spread function.

This subsection concerns the total variation deblurring \cite{RudinOsherFatemi:1992}, assuming the periodic
boundary conditions on images. To apply Algorithm \ref{alg:noise}, we take $W = \nabla$ to be the
discrete gradient operator as used in \cite{WangYangYinZhang:2008,ZhangBurgerOsher:2011}.
%In order to apply Algorithm \ref{alg:noise}, we take $W = \nabla$ to be the
%discrete gradient operator that maps $\mathbb{R}^{M\times N}$ into $\mathbb{R}^{M\times N}\times {\mathbb R}^{M\times N}$,
%i.e. $W x = (\nabla_1 x, \nabla_2 x)$ for $x=(x_{i,j}) \in {\mathbb R}^{M\times N}$, where
%\begin{align*}
%(\nabla_1 x)_{i,j} &= \left\{\begin{array}{lll}
%x_{i+1,j}-x_{i,j}& \mbox{ for } i=1,\cdots,M-1; \ j=1,\cdots,N,\\
%x_{1,j}-x_{M,j} & \mbox{ for } i =M; \ j=1,\cdots,N,
%\end{array}\right.\\
%(\nabla_2 x)_{i,j} &= \left\{\begin{array}{lll}
%x_{i,j+1}-x_{i,j} & \mbox{ for } i=1,\cdots,M; \ j=1,\cdots,N-1,\\
%x_{i,1}-x_{i,N} & \mbox{ for } i=1,\cdots,M; \ j =N.
%\end{array}\right.
%\end{align*}
%We also take $f$ to be the function over ${\mathbb R}^{M\times N} \times {\mathbb R}^{M\times N}$ defined by
%\begin{align*}
%f(u, v) =\sum_{i=1}^M \sum_{j=1}^N \sqrt{u_{i,j}^2 + v_{i,j}^2} + \frac{\nu}{2} \sum_{i=1}^M \sum_{j=1}^N (u_{i,j}^2 + v_{i,j}^2).
%\end{align*}
%It is clear that $f(W x)$ is a perturbation of the total variation of $x$. Now the solution of the $x$-subproblem in
%Algorithm \ref{alg:noise} is given by
%$$
%x_{k+1}^\d = \left(\rho_1 A^*A+ \rho_2 W^*W\right)^{-1} \left(A^* (\rho_1 b^\d-\la_k^\d) + W^*(\rho_2 y_k^\d -\mu_k^\d)\right).
%$$
%Under the periodic boundary condition, $A^*A$ and $W^*W$ are block circulant matrices, and hence
%$\rho_1 A^*A + \rho_2 W^*W$ can be diagonalized by the 2D discrete Fourier transform. Consequently,
Correspondingly, the $x$-subproblem can be solved efficiently by the fast Fourier transform (FFT) and
the $y$-subproblem has an explicit solution given by the soft-thresholding \cite{WangYangYinZhang:2008}.
Therefore, Algorithm \ref{alg:noise} can be
efficiently implemented.

Figure \ref{fig:admmtv} reports numerical results by Algorithm \ref{alg:noise} on test
images Cameraman ($256\times256$) and Pirate ($512\times512$) with motion blur (\texttt{fspecial('motion',35,50)})
and Gaussian blur (\texttt{fspecial('gaussian',[20 20], 20)}) respectively. The noise level are $\delta = 0.256$
and $0.511$ respectively.  As comparisons, we also include the results obtained by FTVd v4.1 in
\cite{WangYangYinZhang:2008} which is a state-of-art algorithm for image deblurring. We can see that
the images reconstructed by our proposed ADMM have comparable
quality as the ones obtained by FTVd v4.1 with similar PSNR (peak sigal-to-noise ratio), while the choice of the regularization
parameter is not needed in our algorithm. Here the PSNR is defined by
$$
\mathrm{PSNR}=10\cdot \log_{10}\frac{255^2}{\mathrm{MSE}}[\mathrm{dB}],
$$
where MSE stands for the mean-squared-error per pixel.

\begin{figure}[ht!]
  \centering
  \begin{tabular}{cccc}
    \includegraphics[trim = 1cm 0cm 1cm 0cm, clip=true,width=0.22\textwidth]{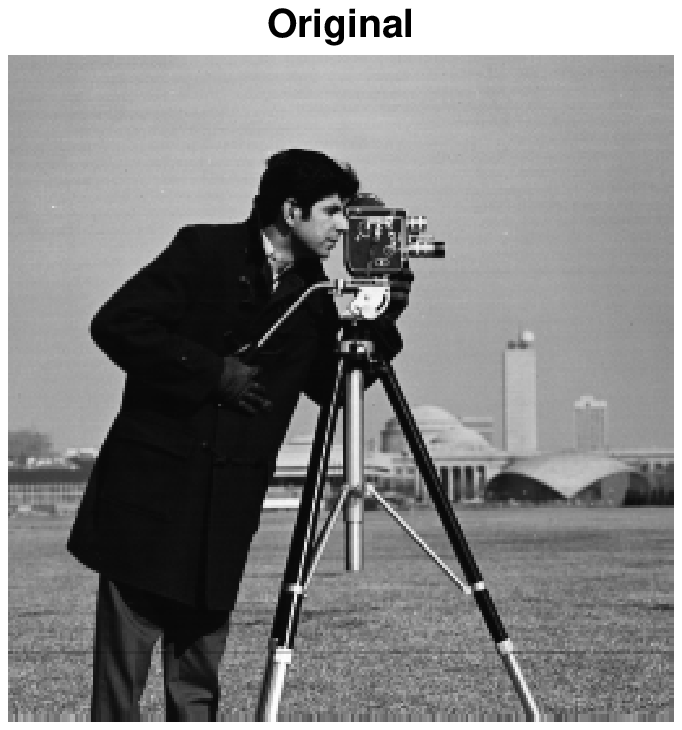}
    & \includegraphics[trim = 1cm 0cm 1cm 0cm, clip=true,width=0.22\textwidth]{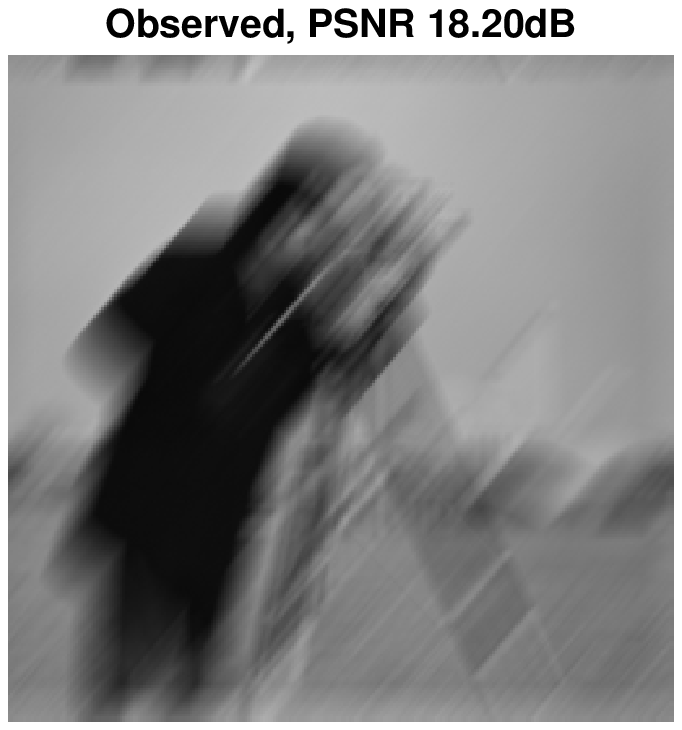}
    &\includegraphics[trim = 1cm 0cm 1cm 0cm, clip=true,width=0.22\textwidth]{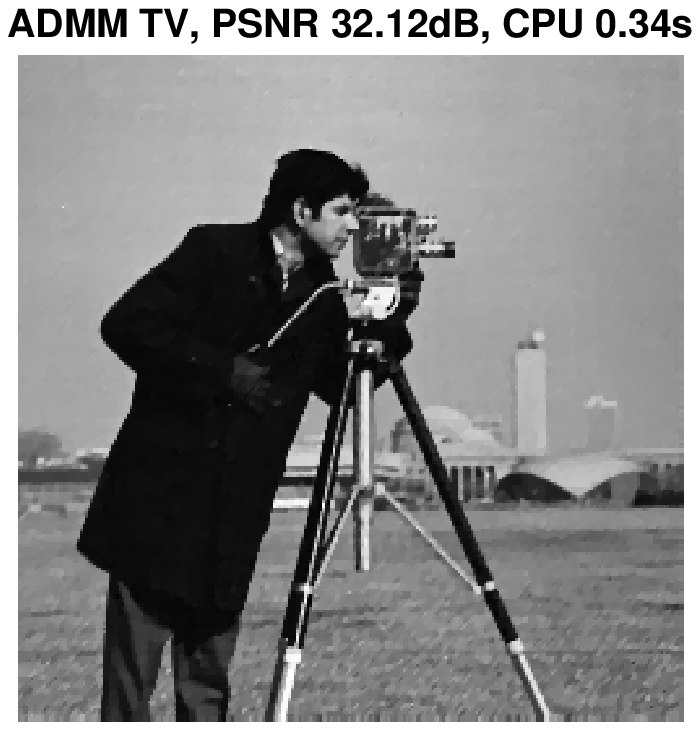}
    &\includegraphics[trim = 1cm 0cm 1cm 0cm, clip=true,width=0.22\textwidth]{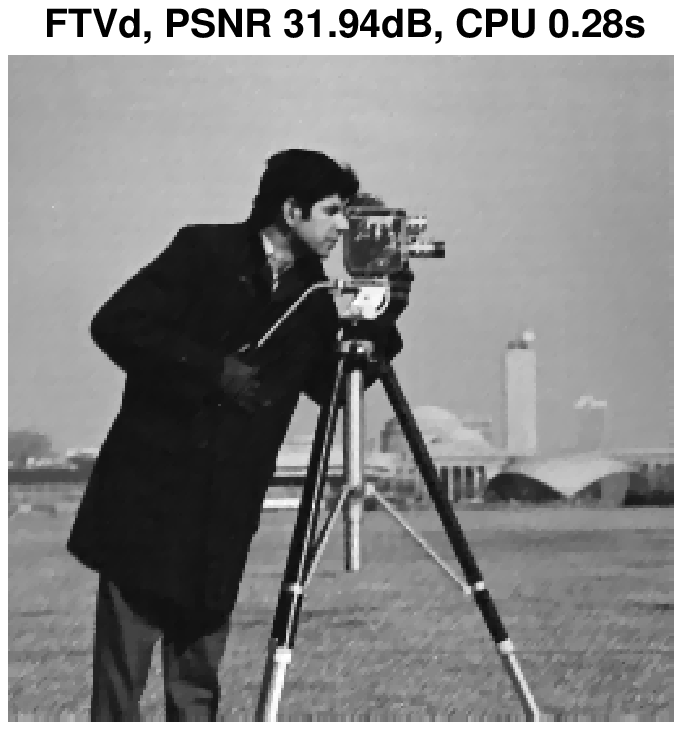}\\
        \includegraphics[trim = 1cm 0cm 1cm 0cm, clip=true,width=0.22\textwidth]{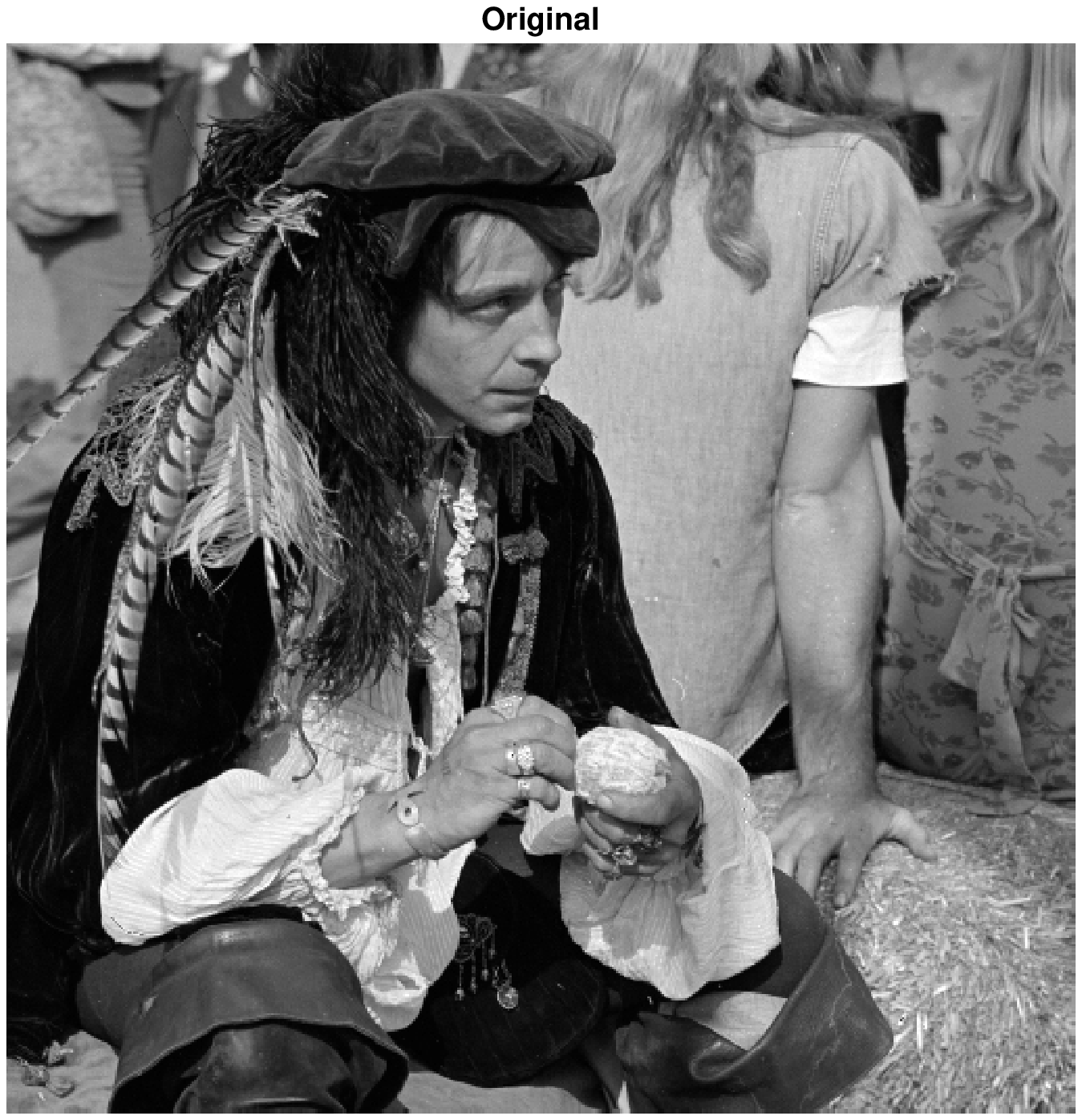}
    & \includegraphics[trim = 1cm 0cm 1cm 0cm, clip=true,width=0.22\textwidth]{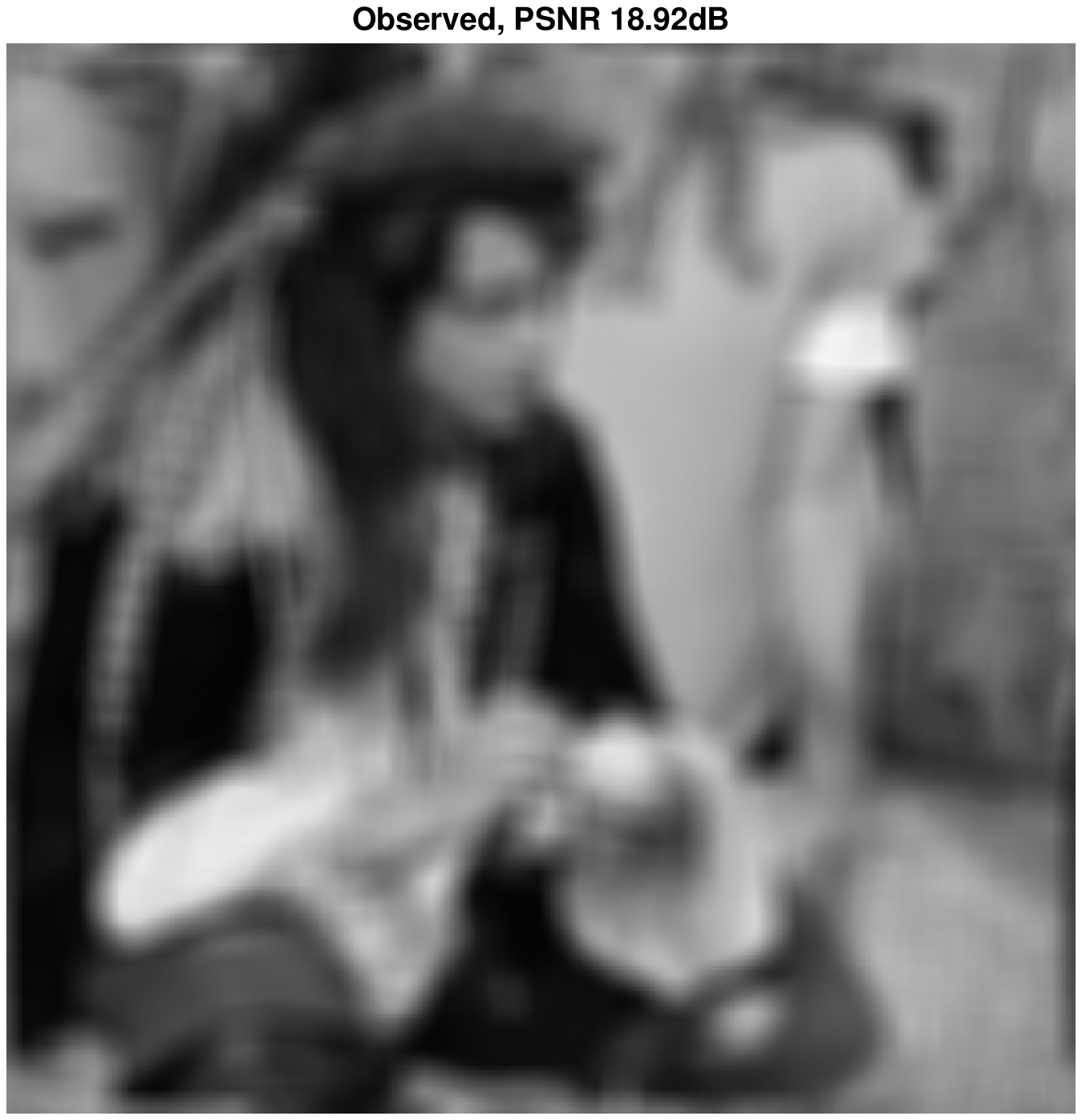}
    &\includegraphics[trim = 1cm 0cm 1cm 0cm, clip=true,width=0.22\textwidth]{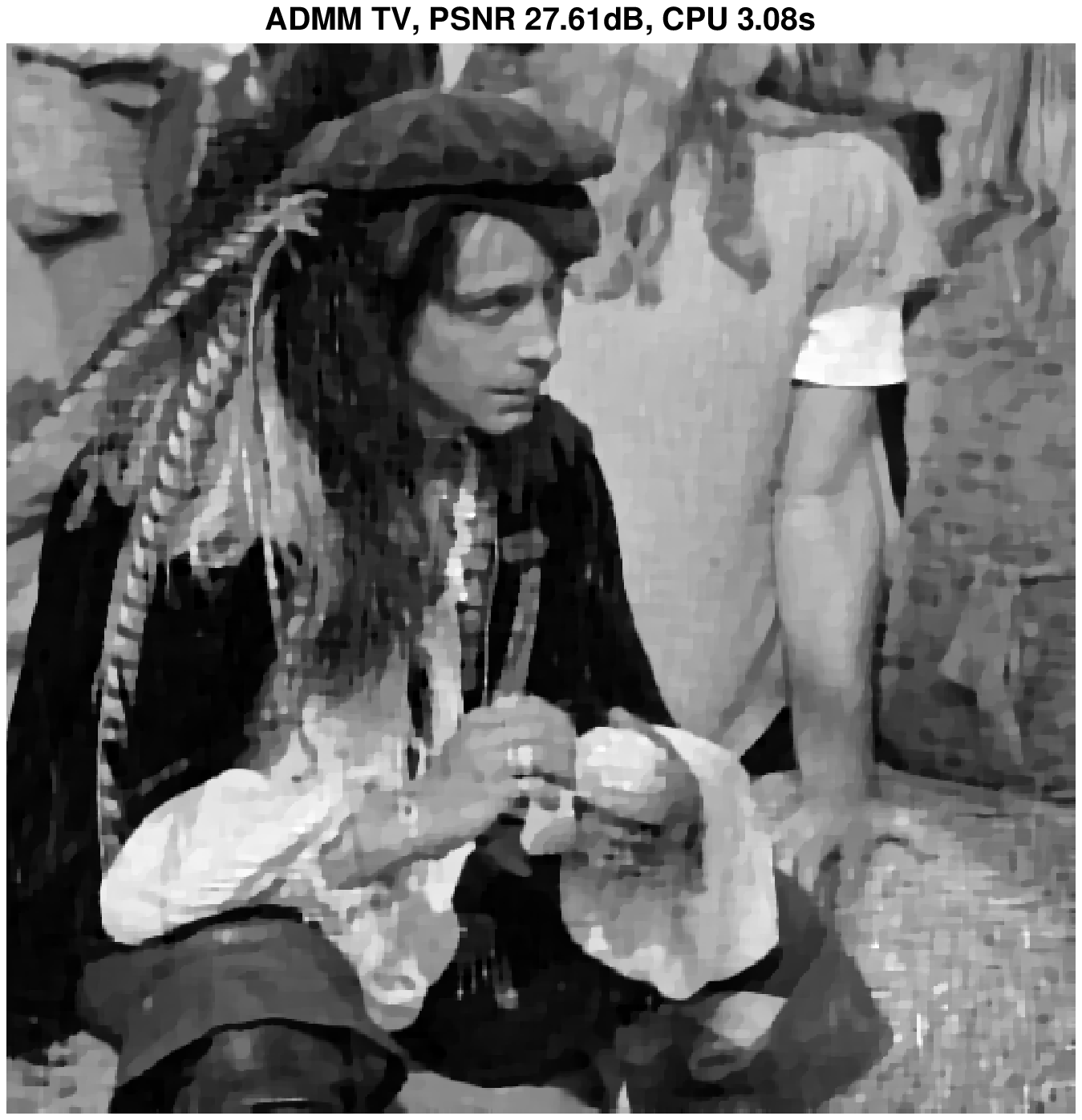}
    &\includegraphics[trim = 1cm 0cm 1cm 0cm, clip=true,width=0.22\textwidth]{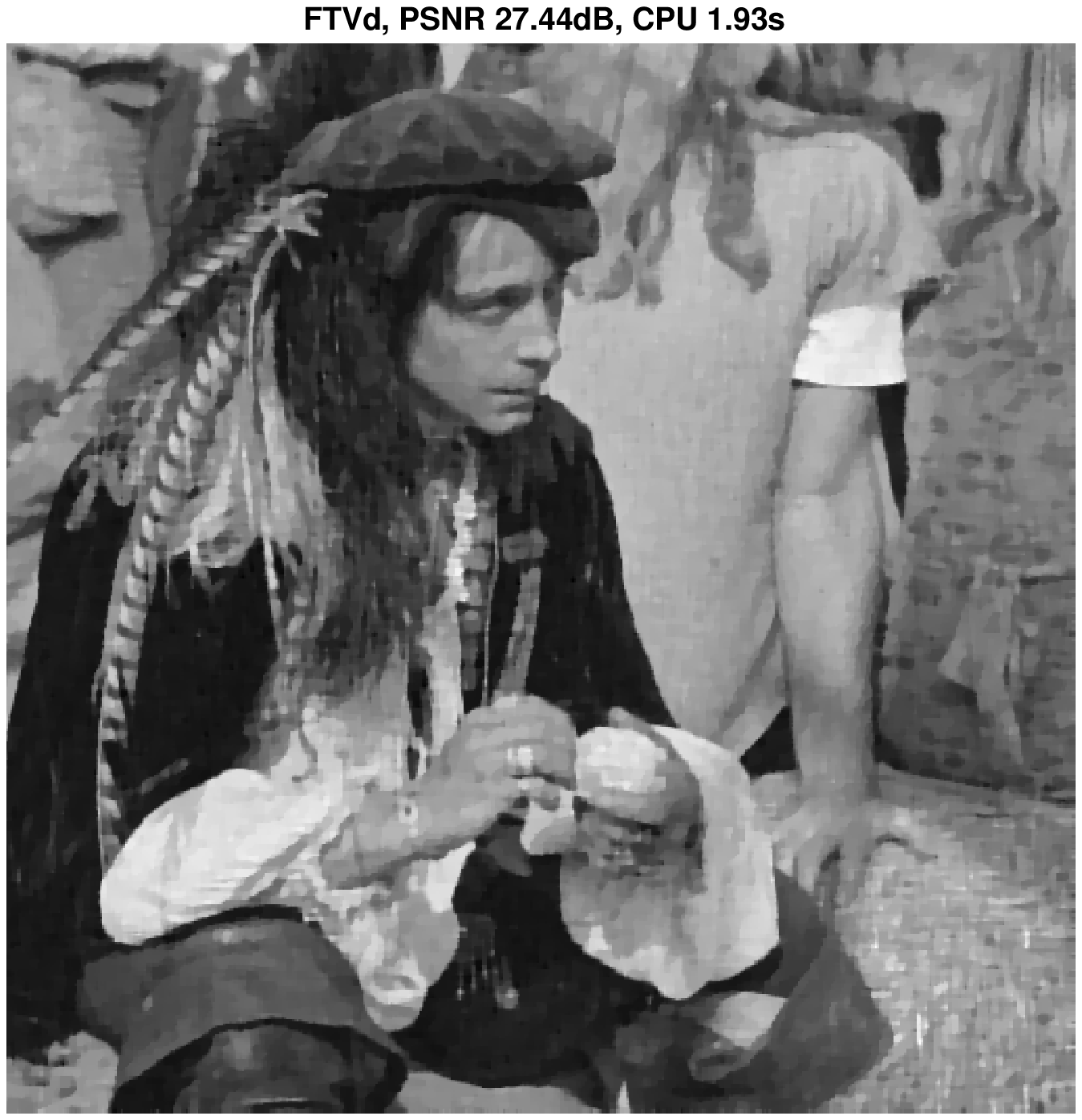}
    \end{tabular}
  \caption{ADMM-TV Debluring}\label{fig:admmtv}
\end{figure}

\subsection{ Two-dimensional framelet deblurring}

In this subsection we show the performance of Algorithm \ref{alg:noise} for image deblurring using
wavelet frames \cite{CaiDongOsherShen:2012,CaiDongShen:2014,DongJiangShen:2013,DongShen:2010,Shen:2010}.

In our numerical simulations on grayscale digital images represented by $M\times N$ arrays, we will use
the two-dimensional Haar with three level decomposition and piecewise linear B-spline framelets with one level decomposition, which can be constructed by taking tensor
products of univariate ones \cite{ChaiShen:2007}.
The action of the discrete framelet transform and its adjoint on images can be implemented implicitly by the MRA-based
algorithms \cite{DaubechiesHanRonShen:2003}. Assuming the periodic boundary condition on images,
the $x$-subproblem in Algorithm \ref{alg:noise} then can be solved by FFT. The $y$-subproblem
can be solved by the soft thresholding. Thus, Algorithm \ref{alg:noise} can be efficiently implemented.
Figure \ref{fig:admmframe} reports the reconstruction results using the test images Phantom ($256\times256$)
%, Text ($256\times256$)
and Peppers ($256\times256$) with motion blur (\texttt{fspecial('motion',50,90)}) and
Gaussian blur (\texttt{fspecial('gaussian',[20 20], 30)}) respectively. The noise leve is $\delta=0.256$
for two examples. These results indicate the satisfactory performance of our proposed ADMM.

\begin{figure}[ht!]
  \centering
  \begin{tabular}{cccc}
    \includegraphics[trim = 0.5cm 0cm 0.5cm 0cm, clip=true,width=0.3\textwidth]{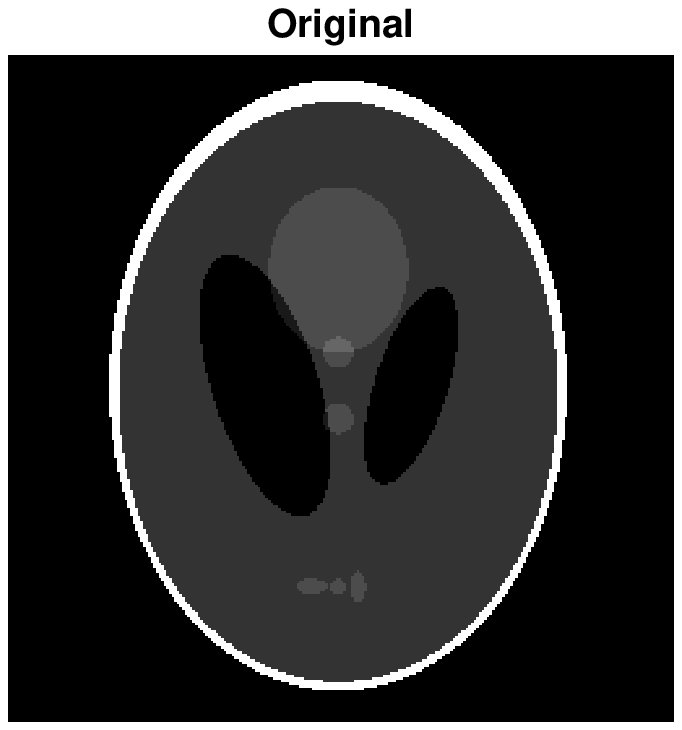}
    & \includegraphics[trim = 0.5cm 0cm 0.5cm 0cm, clip=true,width=0.3\textwidth]{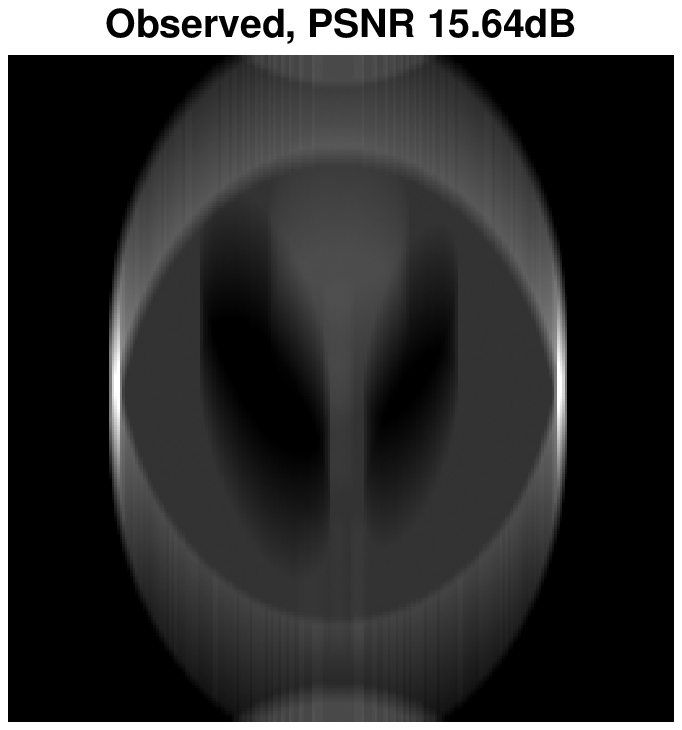}
    &\includegraphics[trim = 0.5cm 0cm 0.5cm 0cm, clip=true,width=0.3\textwidth]{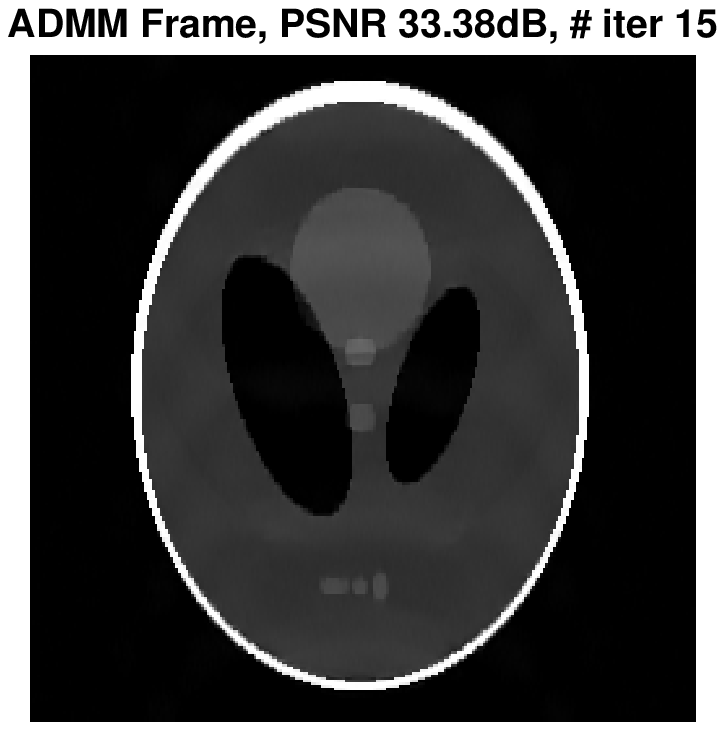}\\
     \includegraphics[trim = 0.5cm 0cm 0.5cm 0cm, clip=true,width=0.3\textwidth]{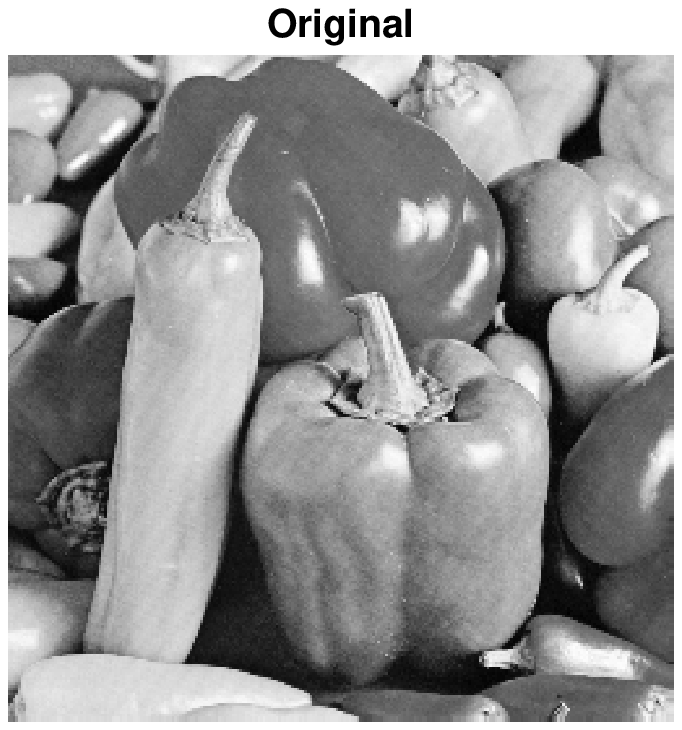}
    & \includegraphics[trim = 0.5cm 0cm 0.5cm 0cm, clip=true,width=0.3\textwidth]{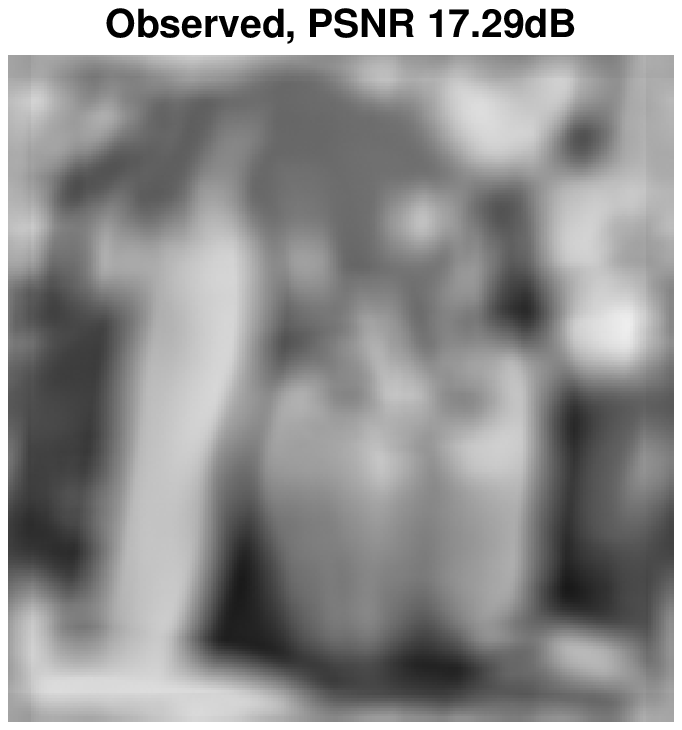}
    &\includegraphics[trim = 0.5cm 0cm 0.5cm 0cm, clip=true,width=0.3\textwidth]{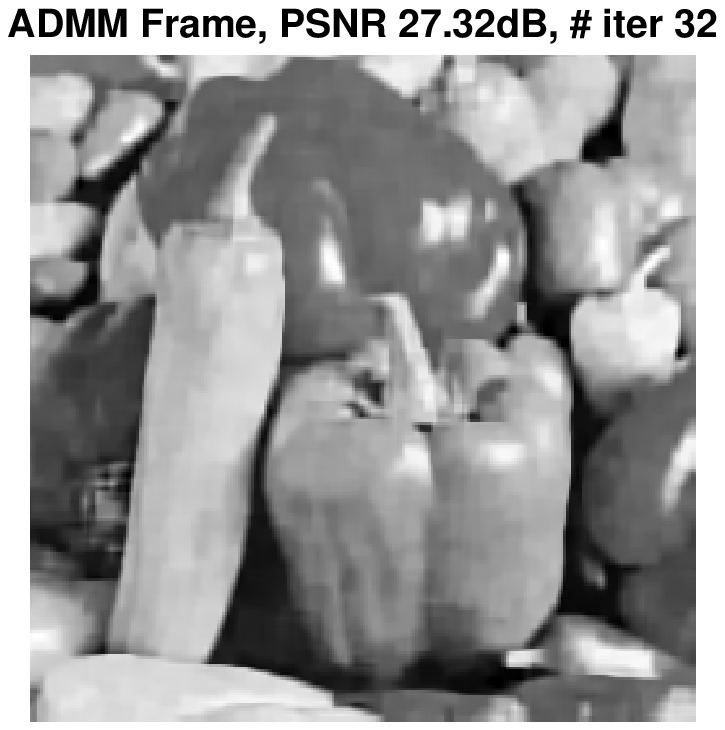}
  \end{tabular}
  \caption{ADMM-Framelet}\label{fig:admmframe}
\end{figure}

\subsection{Semi-convergence}\label{sec:3.4}

In this subsection we investigate the numerical performance of our ADMM if it is not terminated properly.
We use the 2-dimensional Cameraman TV deblurring as an example and run our ADMM until a preassigned maximum number of iterations (500) is
achieved. In Figures \ref{fig:semicon2dtv} we plot the corresponding results on
the PSNR values and the values of $E_k^\d$ versus the number of iterations. It turns out that
the PSNR increases first and then decreases after a critical number of iterations. This illustrates the
semi-convergence property of our proposed ADMM in the framework of iterative regularization methods
and indicates the importance of terminating the iteration by a suitable stop rule.

%\begin{figure}[ht!]
%  \centering
%  \begin{tabular}{cccc}
%    \includegraphics[trim = .8cm 0cm .6cm 0cm, clip=true,width=0.23\textwidth]{fig_3_8_1.eps}
%    & \includegraphics[trim = 0cm 0cm 1.4cm 0cm, clip=true,width=0.23\textwidth]{fig_3_8_2.eps}
%    &\includegraphics[trim = 0.8cm 0cm 0.6cm 0cm, clip=true,width=0.23\textwidth]{fig_3_8_3.eps}
%    &\includegraphics[trim = 0cm 0cm 1.4cm 0cm, clip=true,width=0.23\textwidth]{fig_3_8_4.eps}\\
%    \multicolumn{2}{c}{Stop by \eqref{eq:stop}} & \multicolumn{2}{c}{Nonstop}
%  \end{tabular}
%  \caption{   $\|x^{\delta}_k - x^*\|$  and $E^{\delta}_k$ of 1-dimensional TV deconvolution
%  with $\delta = 0.01$. Stop by discrepancy principle \eqref{eq:stop} (left two panels) and
%  Never stop until $k = 10000$  (right two  panels).}\label{fig:semicon1dtv}
%\end{figure}

  \begin{figure}[ht!]
  \centering
  \begin{tabular}{cccc}
    \includegraphics[trim = 0.8cm 0cm 0.5cm 0cm, clip=true,width=0.22\textwidth]{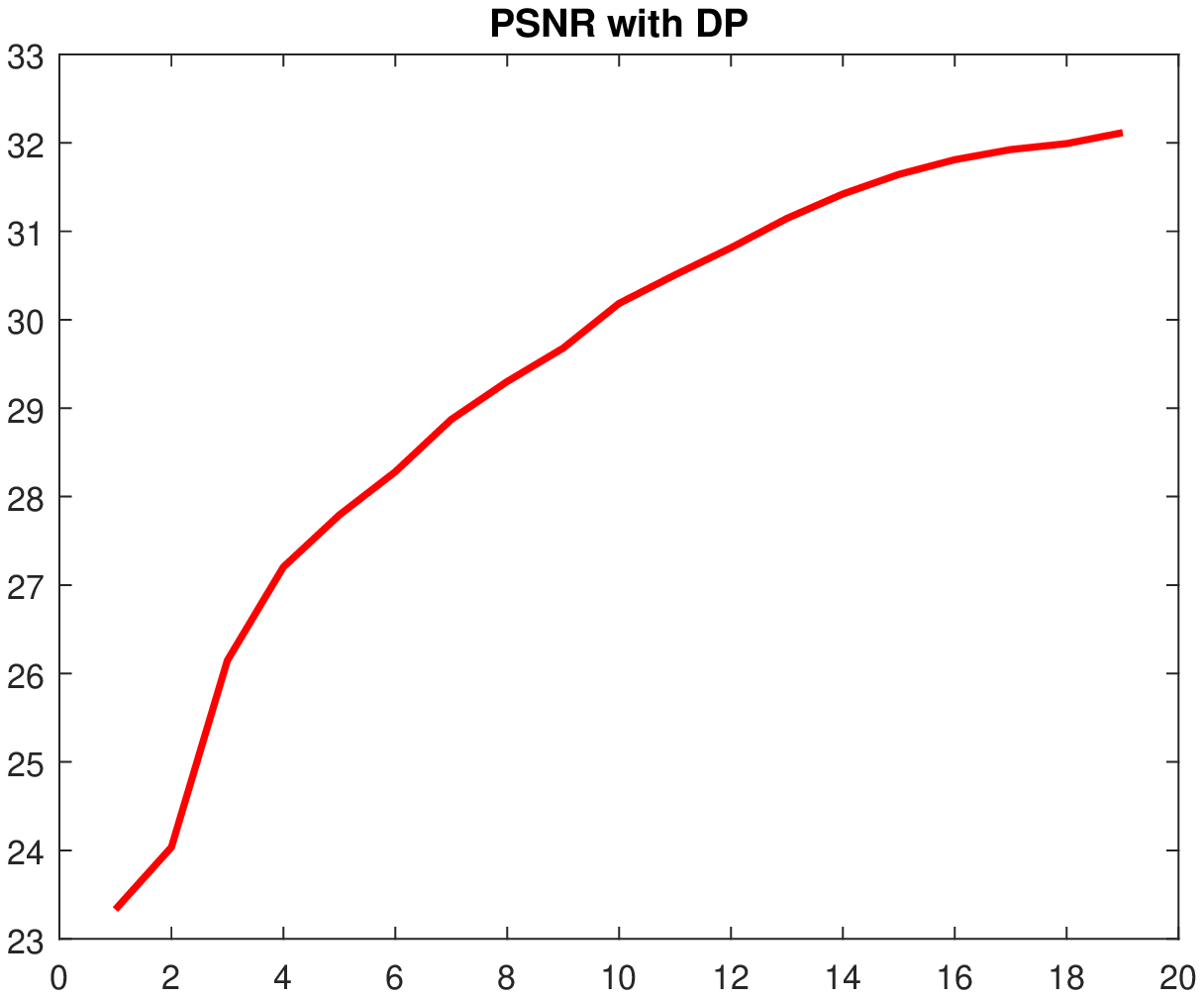}
    & \includegraphics[trim = 0.8cm 0cm 0.5cm 0cm, clip=true,width=0.22\textwidth]{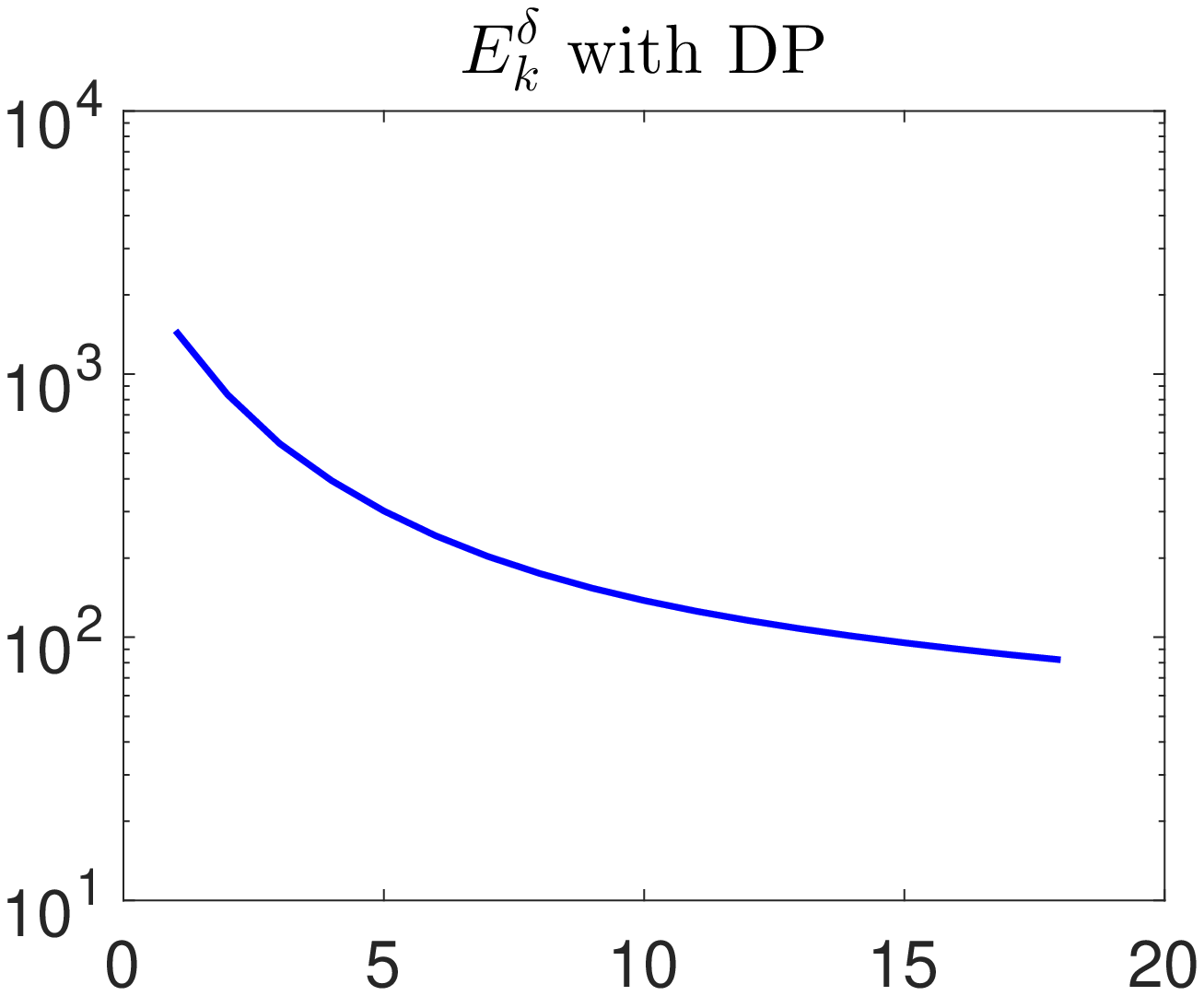}
    &\includegraphics[trim = 0.8cm 0cm 0.5cm 0cm, clip=true,width=0.22\textwidth]{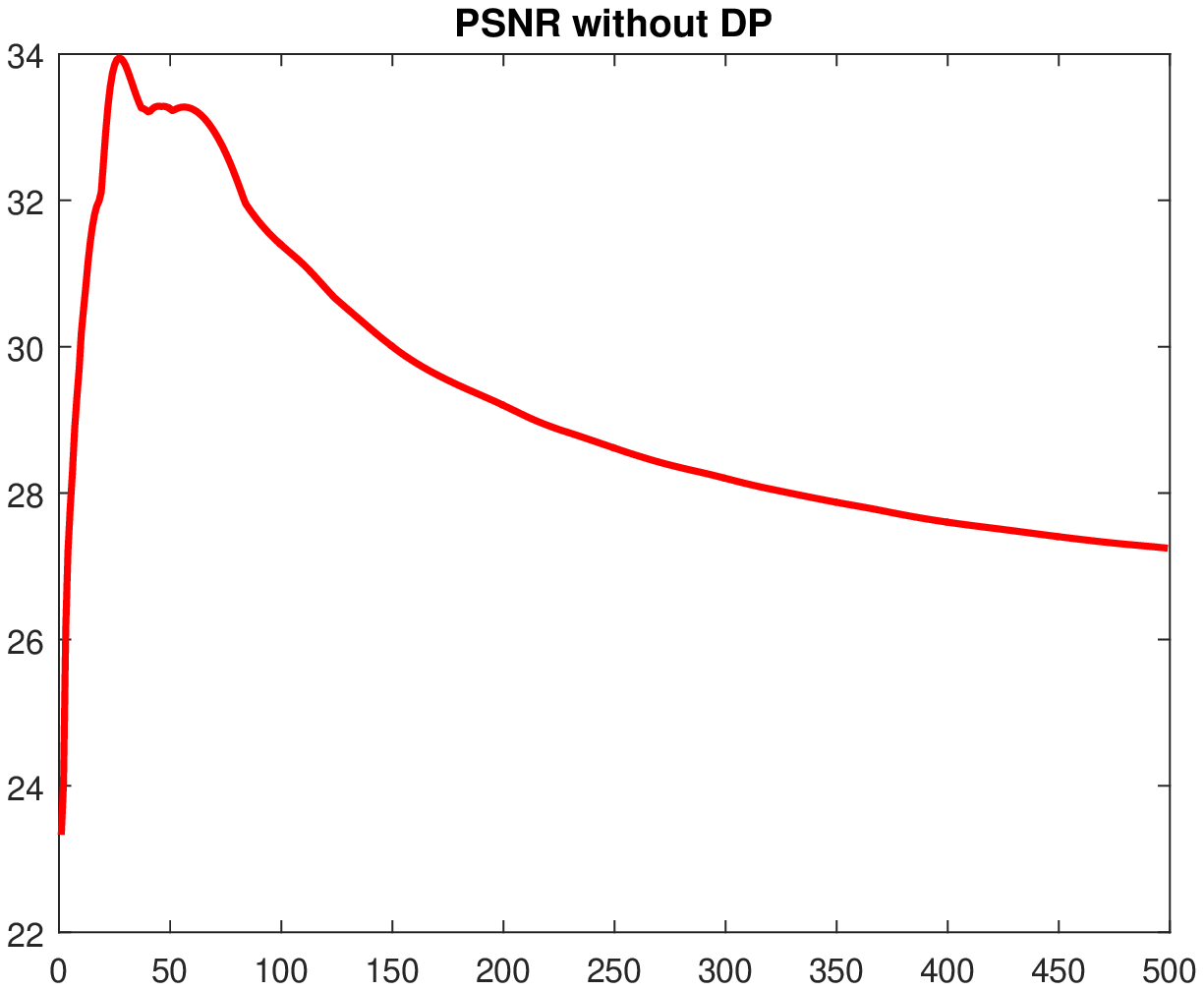}
    &\includegraphics[trim = 0.8cm 0cm 0.5cm 0cm, clip=true,width=0.22\textwidth]{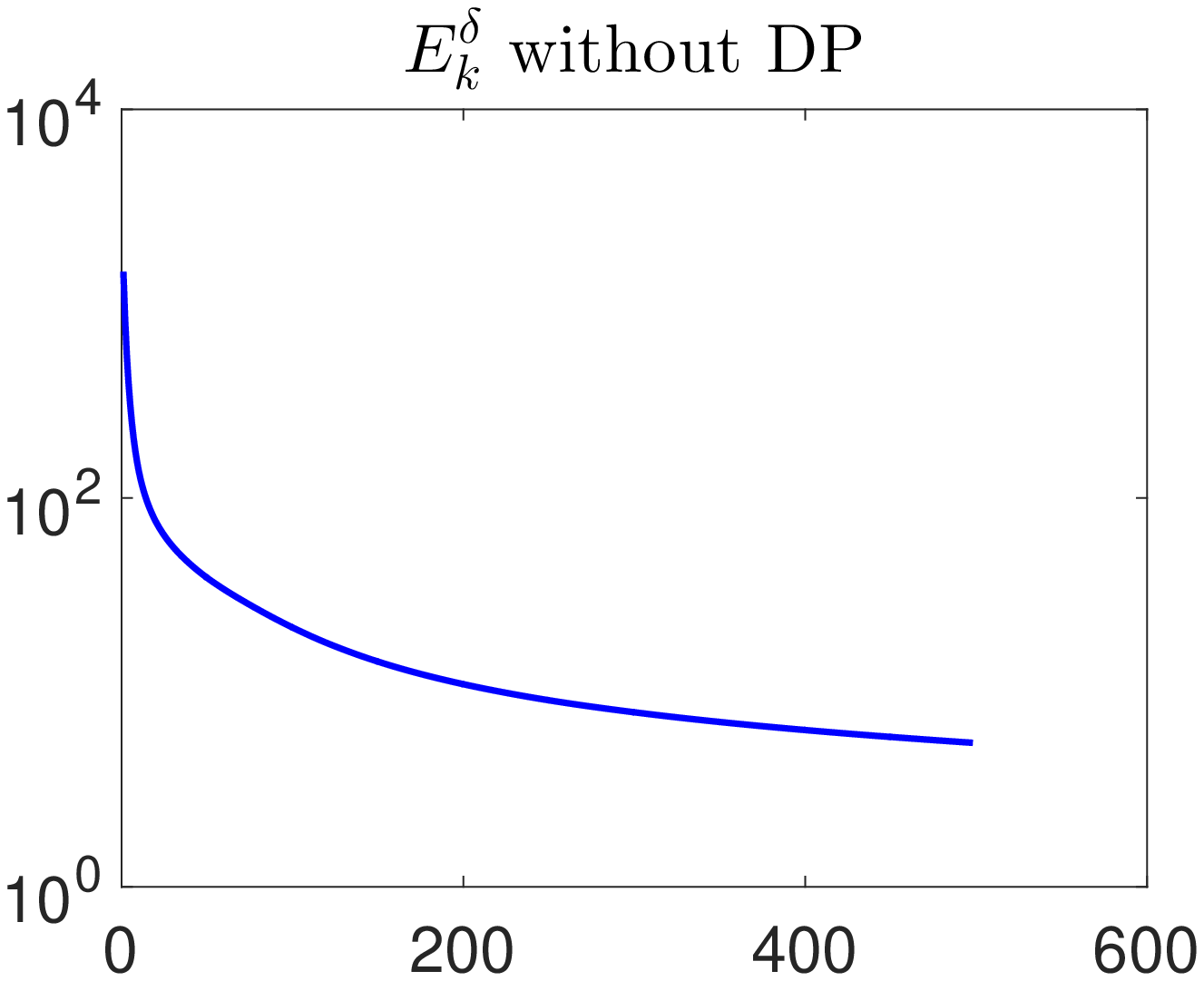}\\
    \multicolumn{2}{c}{Stop by \eqref{eq:stop} } & \multicolumn{2}{c}{Nonstop}
  \end{tabular}
  \caption{Semi-convergence test}\label{fig:semicon2dtv}
\end{figure}

%\begin{figure}[ht!]
%  \centering
%  \begin{tabular}{cccc}
%    \includegraphics[trim = 0.8cm 0cm 0.5cm 0cm, clip=true,width=0.22\textwidth]{fig_3_10_1.eps}
%    & \includegraphics[trim = 0.8cm 0cm 0.5cm 0cm, clip=true,width=0.22\textwidth]{fig_3_10_2.eps}
%    &\includegraphics[trim = 0.8cm 0cm 0.5cm 0cm, clip=true,width=0.22\textwidth]{fig_3_10_3.eps}
%    &\includegraphics[trim = 0.8cm 0cm 0.5cm 0cm, clip=true,width=0.22\textwidth]{fig_3_10_4.eps}\\
%    \multicolumn{2}{c}{Stop by \eqref{eq:stop} } & \multicolumn{2}{c}{ Nonstop}
%  \end{tabular}
%  \caption{PSNR and $E^{\delta}_k$ of two-dimensional Framelet  deblurring  of Phantom with $\delta = 0.256$.
%  Stop by discrepancy principle \eqref{eq:stop} (left two panels) and  Never stop until $k = 1000$
%  (right two  panels).}\label{fig:semicon2dframe}
%\end{figure}

\subsection{Sensitivity on $\rho_1$ and $\rho_2$}

In this subsection we illustrate that the performance of Algorithm \ref{alg:noise} is not quite sensitive to
the two parameters $\rho_1$ and $\rho_2$. To this end we run our ADMM on the Cameraman TV deblurring  problem  with same parameters
as in Section 3.2 with different $\rho_1$ and $\rho_2$. The PSNR and the number of  iterations are given in
Table \ref{tab:1} from which we can see that the reconstructions remain stable for a large range of values of
$\rho_1$ and $\rho_2$.

\begin{table}[h]
 \caption{PSNR and number of iteration for different $\rho_1$ and $\rho_2$.}\label{tab:1}
%  \vspace{-0.5cm}
  \begin{center}
 \begin{tabular}{c|ccccc}
 \hline
   \backslashbox{$\rho_2$}{$\rho_1$}  & 250& 500 & 1000 & 2000& 4000      \\
 \hline
   2.5 &   (31.9,   61  )          &   (32.1, 30   )    &   ( 32.7, 15  )    &   (31.7,  9  )      &   (30.6,  5 )\\
   5   &   (32.0,   64  )          &   (32.9, 33   )    &   ( 32.4, 17  )    &   (31.6, 10  )      &   (31.5,  6 )\\
   10  &   (32.5,   68  )          &   (33.0, 36   )    &   ( 32.1, 20  )    &   (32.2, 11  )      &   (32.0,  7 )\\
   20  &   (33.1,   74  )          &   (32.3, 40   )    &   ( 32.5, 23  )    &   (32.7, 14  )      &   (32.4,  9 )\\
   40  &   (32.8,   82  )          &   (32.6, 47   )    &   ( 32.8, 28  )    &   (32.5, 17  )      &   (32.0, 11 )\\
   80  &   (32.7,   97  )          &   (32.9, 58   )    &   ( 32.8, 36  )    &   (32.3, 23  )      &   (31.6, 15 )\\
   160 &   (32.9,  120  )          &   (32.9, 76   )    &   ( 32.5, 50  )    &   (31.9, 34  )      &   (31.3, 24 )\\
  \hline
  \end{tabular}
 \end{center}
\end{table}

%\begin{figure}[ht!]
%  \centering
%  \begin{tabular}{cc}
%    \includegraphics[trim = 0.5cm 0cm 0.2cm 0cm, clip=true,width=0.47\textwidth]{fig_3_11_1.eps}
%    & \includegraphics[trim = 0.5cm 0cm 0.2cm 0cm, clip=true,width=0.47\textwidth]{fig_3_11_2.eps}
%  \end{tabular}
%  \caption{PSNR on the grid minus $31.94$ (left panel)  and error $\|x^*-x_{k_\d}^\d\|$ (right panel).}\label{fig:sensitivitypsnrerr}
%\end{figure}
%\begin{itemize}
%\item One dimensional case, convolution, sparse or piecewise constant
%\item Imagine TV-debluring, compare with FTVD4.1 \cite{WangYangYinZhang:2008}, for Shepp-Logan and Text \cite{RudinOsherFatemi:1992}
%\item Imagine Frame-debluring, compare with FTVD, TV model, for peppers\cite{Shen:2010,DongShen:2010,CaiDongOsherShen:2012,DongJiangShen:2013,CaiDongShen:2014}
%\item check semi convergence (using 2D example)
%\item check parameter insensitivity
%\end{itemize}

\section{Conclusion}
In this work we propose an alternating direction method of multiplies to solve inverse problems.
When the data is given exactly, we prove the convergence of the algorithm without using the existence of Lagrange
multipliers. When the date contains noise, we propose a stop rule and show that our ADMM renders into a
regularization method. Numerical simulations are given to show the efficiency of the proposed algorithm.

There are several possible extensions for this work. First, in our ADMM for solving inverse problems, we used
two parameters $\rho_1$ and $\rho_2$ which are fixed during iterations. It is natural to consider the situation
that $\rho_1$ and $\rho_2$ change dynamically. Variable step sizes have been used in the augmented Lagrangian method
to reduce the number of iterations (see \cite{FrickScherzer:2010,FrickLorenzElena:2011,JinZhong:2014}).
It would be interesting to investigate what will happen if dynamically changing parameters $\rho_1$ and $\rho_2$ are used in our ADMM.
Second, the $x$-subproblem in our ADMM requires to solve linear systems related to $\rho_1 A^*A + \rho_2 W^*W$.
In general, solving such linear system is very expensive. It might be possible to remedy this drawback by applying the linearization and/or precondition strategies. Finally, in applications where the sought solution is a priori known to satisfy certain constraints,
it is of interest to consider how to incorporate such constraints into our ADMM in an easily implementable way
and to prove some convergence results.

\section*{Acknowledgment}
Y. Jiao is partially supported by National Natural Science Foundation of China No. 11501579,
Q. Jin is partially supported by the discovery project grant DP150102345 of Australian Research Council
and X. Lu is partially supported by the National Natural Science Foundation of China No. 11471253.

\bibliographystyle{abbrv}
\bibliography{admminv}
\end{document}